\numberwithin{equation}{section}
\newcommand{\tto}{\dashrightarrow}
\newcommand{\ppc}{\mathbf{P}^2(\mathbb{C})}
\newcommand{\ppr}{\mathbf{P}^2(\mathbb{R})}
\newtheorem*{thmm}{Main Theorem}
\newtheorem{thm}{Theorem}[section]
\newtheorem{lem}[thm]{Lemma}
\newtheorem{rem}[thm]{Remark}
\newtheorem{prop}[thm]{Proposition}
\newcommand{\Pic}{\text{Pic}}
\newcommand{\R}{\mathbb{R}}
\newcommand{\Z}{\mathbb{Z}}
\newcommand{\rptwo}{\mathbf{P}^2(\mathbb{R})}
\title{Mapping Classes of Real Rational Surface Automorphisms} 
\author{Kyounghee Kim}
\address{Department of Mathematics\\
         Florida State University\\
         Tallahassee, FL 32308}
\email{kkim6@fsu.edu}
\subjclass[2023]{37E30,57K20,37F99,14E07}
\keywords{Rational Surface Automorphism, Pseudo-Anosov, Penner's construction, Lehmer's number}
\begin{document}
\maketitle

\begin{abstract}
Let $\{F_n, n\ge 8\}$ be a family of diffeomorphisms on real rational surfaces that are birationally equivalent to birational maps on $\ppr$. 
In this article, we investigate the mapping classes of the diffeomorphisms $F_n, n\ge 8$. These diffeomorphisms are reducible with unique invariant irreducible curves, and we determine the mapping classes of their restrictions, $\hat F_n, n \ge 8$, on the cut surfaces, showing that they induce pseudo-Anosov maps on a puctured oriented surface and do not arise from Penner's construction. For $n=8$, Lehmer's number is realized as the stretch factor of $\hat F_8$, a pseudo-Anosov map on a punctured genus $5$ orientable surface. The diffeomorphism $\hat F_8$ is a new geometric realization of Lehmer's number.
\end{abstract}

\section{Introduction}\label{S:intro}
If an automorphism $F: X(\mathbb{C}) \to X(\mathbb{C})$ on a rational surface preserves a set $X(\mathbb{R})$ of real points, the restriction $F|_{X(\mathbb{R})}$ is a diffeomorphism on a real rational (non-orientable) surface $X(\mathbb{R})$. According to Nielsen-Thurston's classification \cite{Thurston:1988, Bleiler-Casson:1988}, the mapping class group of a finite type surface consists of one of the three types: periodic, reducible, and pseudo-Anosov. 

\vspace{1ex}
In this article, we consider a family $\{ F_n: X_n(\mathbb{R}) \to X_n(\mathbb{R}), n\ge 8\}$ of real rational surface automorphisms with unique invariant irreducible curves $C_n$. Due to the existence of a unique invariant curve, the mapping class of each member is reducible. We focus on the restriction,  $\hat F_n= F_n|_{X_n(\mathbb{R})\setminus C_n} $ on a cut surface and investigate its mapping class. For each $n\ge 8$, we identify the pseudo-Anosov map given by a product of positive Dehn twists, which is isotopic to $\hat F_n$. We demonstrate this result by comparing their induced actions on the fundamental group $\pi_1(X(\mathbb{R})\setminus C_n)$.

\vspace{1ex}

A birational map $f: \ppc \dasharrow \ppc$ is a rational map such that there exists a birational map $g:\ppc \dasharrow \ppc$ with $f\circ g = Id$ on a Zariski dense subset of $\ppc$. We refer to $g= f^{-1}$ as the inverse of $f$. Homogeneous polynomials determine a birational map:
\[ f [t:x:y] = [f_t:f_x:f_z] \] where coordinate functions $f_t, f_x, f_y$ can be written as homogeneous polynomials of the same degree $d$ with no non-constant common factor. The (algebraic) degree of $f$ is defined by the common degree $d$. One important quantity for a birational map is \textit{ a dynamical degree} $\lambda(f)$: 
\[ \lambda(f) = \lim_{n \to \infty} (\text{degree } f^n)^{1/n}. \]

 The dynamical degree can determine whether a given birational map is birationally equivalent to an automorphism.

\begin{thm}\cite{Diller-Favre:2001, blancdynamical}
Suppose $f$ is a birational map with the dynamical degree $\lambda(f)$.
\begin{itemize}
\item If $\lambda(f)$ is a Salem number, then $f$ is birationally equivalent to an automorphism.
\item If $\lambda(f)$ is a non-quadratic Pisot number, then $f$ is not birationally equivalent to an automorphism. 
\end{itemize}
\end{thm}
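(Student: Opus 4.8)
The plan is to prove the two assertions separately: the second is an elementary consequence of how an automorphism acts on $\mathrm{Pic}$, while the first — considerably deeper — requires passing to a well-behaved birational model and controlling the curves contracted along orbits.

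\emph{Second bullet (the necessity direction).} Suppose, for contradiction, that $f$ is birationally conjugate to an automorphism $F\colon X\to X$ of a smooth projective (necessarily rational) surface, with $\lambda(f)$ a non-quadratic Pisot number. Since the dynamical degree is a birational invariant, $\lambda(f)=\lambda(F)$ is the spectral radius of $F^*$ acting on $\mathrm{Pic}(X)_{\mathbb{R}}$. Here $\mathrm{Pic}(X)$ is a unimodular lattice of signature $(1,\rho-1)$ for the intersection pairing (Hodge index), $F^*$ is an isometry of it fixing $K_X$, so the characteristic polynomial $\chi$ of $F^*$ is monic and integral, and, since $F^*$ preserves a nondegenerate pairing it is conjugate to its own inverse; hence the roots of $\chi$ come in pairs $r,1/r$, i.e.\ $\chi$ is reciprocal. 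Invoking the standard structure theory of Lorentzian isometries, $F^*$ has at most one eigenvalue of modulus $>1$; writing $\lambda:=\lambda(f)>1$, that eigenvalue is $\lambda$ itself, real and simple with isotropic eigenvector, while $1/\lambda$ is the unique eigenvalue of modulus $<1$ and every other eigenvalue lies on the unit circle. A short argument with constant terms of minimal polynomials then shows $\lambda\notin\mathbb{Z}$ (otherwise $1/\lambda$, being a root of $\chi$ hence an algebraic integer, would force $\lambda=1$) and that $1/\lambda$ must be a Galois conjugate of $\lambda$; hence the minimal polynomial of $\lambda$ is reciprocal with exactly one root outside and one inside the unit circle, i.e.\ a Salem polynomial (or, if of degree two, $x^{2}-ax+1$). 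But a non-quadratic Pisot number has degree $\geq 3$, hence at least two conjugates strictly inside the unit disk and none on it, so it is a root of no Salem polynomial. This contradiction proves the second bullet.

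\emph{First bullet.} Now assume $\lambda(f)$ is a Salem number, so $\lambda(f)>1$. I would argue in three steps. First, replace $f$ by an algebraically stable model $F\colon X\dashrightarrow X$, which exists for every birational surface map (Diller--Favre), so that $(F^{n})^*=(F^*)^{n}$ on $\mathrm{Pic}(X)_{\mathbb{R}}$ and $\lambda(f)$ is the spectral radius of $F^*$. Second, a Perron--Frobenius argument in the Lorentzian positive cone yields nef isotropic classes $\theta^{+}$ with $F^*\theta^{+}=\lambda\theta^{+}$ and $\theta^{-}$ with $F_{*}\theta^{-}=\lambda\theta^{-}$, satisfying $\theta^{+}\cdot\theta^{-}>0$; the sole obstruction to $F$ being an automorphism is the presence of finitely many irreducible curves contracted by $F$ or by $F^{-1}$, together with their forward and backward orbits, which by algebraic stability never meet the indeterminacy locus in the obstructing direction. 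Third — and this is the crux — I use that the minimal polynomial $m$ of $\lambda(f)$ is reciprocal of some degree $d$: the $F^*$-invariant rational subspace $V\subset\mathrm{Pic}(X)_{\mathbb{R}}$ spanned by the $\lambda$-eigenvector and its Galois conjugates is then $d$-dimensional with characteristic polynomial exactly $m$, while $F^*$ acts with finite order on a complement; from this one deduces that the contracted curves lie on finitely many periodic orbits, so that blowing up these orbit points lifts $F$ to an automorphism of a rational surface. The main obstacle is precisely this third step — turning the arithmetic fact ``$m$ is reciprocal'' into the geometric fact ``the contracted curves have finite orbits'' — since a contracted curve with an infinite non-periodic orbit would make $\deg f^{n}$ grow incompatibly with the eigenvalue pattern forced by a Salem minimal polynomial. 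Making this precise is the cohomological analysis of Diller--Favre, or, in the more structural language of Blanc and Cantat, the statement that $f$ acts loxodromically on the infinite-dimensional hyperbolic Picard--Manin space and is conjugate to an automorphism exactly when the two endpoints of its axis already lie in $\mathrm{Pic}(Y)_{\mathbb{R}}$ for some blow-up $Y\to\mathbb{P}^{2}$ — a condition equivalent to $\lambda(f)$ being a Salem number. I would follow whichever of these routes is cleaner, citing Diller--Favre and Blanc (resp.\ Blanc--Cantat) for the technical core while giving the lattice-theoretic necessity argument above in full.
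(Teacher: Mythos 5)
The paper does not prove this theorem; it is quoted as a black box from Diller--Favre and Blanc, and none of its ingredients are reconstructed anywhere in the text. So there is no ``paper's own proof'' to compare against. Judged on its own terms, your argument for the second bullet is the standard and essentially correct lattice-theoretic one (reciprocal characteristic polynomial of an isometry of $\mathrm{Pic}(X)$, Lorentzian signature forces at most one eigenvalue off the unit circle on each side, constant-term argument forces $1/\lambda$ into the Galois orbit of $\lambda$), modulo one small imprecision: an isometry of a nondegenerate form need not be \emph{conjugate} to its inverse, only to the transpose of its inverse; what you actually need, and what follows, is that $F^{*}$ and $(F^{*})^{-1}$ have equal characteristic polynomials, which already gives reciprocity. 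For the first bullet you correctly identify that the content is the Diller--Favre/Blanc--Cantat analysis (algebraic stability, the nef isotropic eigenclasses, control of contracted curves, or equivalently loxodromic action on the Picard--Manin hyperboloid), and your outline is a reasonable summary, but it is not a proof and you say so; that is consistent with how the paper itself treats the statement, namely as an external citation rather than something to be established.
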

A Salem number $\lambda>1$ is an algebraic integer whose Galois conjugates, except for $\lambda$ and $\lambda^{-1}$, have modulus $1$. A Pisot number $\lambda>1$ is an algebraic integer whose Galois conjugates, except for $\lambda$, have modulus strictly less than $1$. In the case where a birational map $f: \ppc \dasharrow \ppc$ is birationally equivalent to an automorphism $F: X(\mathbb{C}) \to X(\mathbb{C})$ on a rational surface $X(\mathbb{C})$, the dynamical degree $\lambda(f)$ is given by the spectral radius of the induced action $F_*$ on $H^{1,1}(X(\mathbb{C}))$. In this case, the topological entropy $h_{top} (F)$ of the automorphism is equal to $\log \lambda(f)$. 

\vspace{1ex}
Let  $f_n :\ppr \tto \ppr , n\ge 8$ be a quadratic birational maps on $\ppr$ defined by 
\begin{equation}\label{E:int} f_n [t:x:y] \ =\   \left[ t x: \frac{\lambda_n^3}{1+\lambda_n} (y-t) x + tx: (\lambda_n^3+\lambda_n^2) (y-x) t + tx\right],   \end{equation}
where $\lambda_n$ is the largest real root of $ \chi_n(t) = t^{n} (t^3-t-1) + t^3+t^2-1$. This polynomial $\chi_n(t), n\ge 8$ is a product of Salem polynomial and cyclotomic factors \cite{Bedford-Kim:2006, gross2009cyclotomic}. Thus, for each $n\ge 8$, $\lambda_n$ is a Salem number. In fact, $\lambda_8$ is Lehmer's number, the known smallest Salem number. 

The birational map $f_n, n\ge 8$, has been considered by many authors \cite{Bedford-Kim:2006, McMullen:2007, Diller-Kim, Kim-Klassen}. For each $n$, the birational map $f_n$ has a unique invariant cubic curve $C_n$ with a cuspidal singularity. In \cite{McMullen:2007, Diller:2011}, a systematic construction was provided using the unique invariant curve, showing that the coordinate functions of $f_n$ are polynomials in $\mathbb{Z}(\lambda_n)[t,x,y]$.

\vspace{1ex}
A birational map $f_n$ is birationally equivalent to an automorphism $F_n$ on a rational surface $X_n(\mathbb{R})$ obtained by blowing up $\ppr$ along $n+2$ points located on the set $C_{n, \text{reg}}$, the set of non-singular points on the unique invariant curve $C_n$. By computing the induced action $F_{n*}$ on $H_1( X_n(\mathbb{R}))$, it has been shown \cite{Diller-Kim} that the spectral radius of $F_{n*}|H_1( X_n(\mathbb{R}))$ is indeed $\lambda_n>1$, which is also the growth rate of the induced action on the fundamental group \cite{Kim-Klassen}. Additionally, it has been shown \cite{Diller-Kim} that there exists a non-vanishing meromorphic two form $\omega$ such that $F_{n*} \omega = \lambda_n \omega$ with $\lambda_n>1$.

\vspace{1ex}
Although rational surface automorphisms on $\ppc$ have been extensively studied by many authors \cite{dolgachev2008reflection,Blanc:2008,McMullen:2007, Bedford-Kim:2009,Bedford-Kim:rotation}, many of these methods are not applicable to rational surface automorphisms on $\ppr$. One of the primary objectives of this article is to introduce a new approach to studying the dynamics of real rational surface automorphisms. For instance, by treating them as pseudo-Anosov maps, we can apply a train-track algorithm \cite{bestvina:1995} and examine the invariant foliations as suggested in \cite{Boyland:2024}.

\vspace{1ex}
The invariant cubic curve $C_n$ is a simple closed curve. By taking the class of $C_n$ as a generator of the fundamental group, we can observe that the restriction to the cut-surface $X_n(\mathbb{R}) \setminus C_n$ also has $\lambda_n$ as the growth rate of the induced action. 

Pseudo-Anosov mapping classes on non-orientable surfaces have been investigated in many articles \cite{Strenner:2018,Liechti-Strenner:2021,Khan-Partin-Winarski}. Liechti and Strenner \cite{Liechti-Strenner:2021} proved an interesting result regarding the stretch factor of a pseudo-Anosov map on a non-oriented surface. 
\begin{thm}\cite[Theorem~1.10]{Liechti-Strenner:2021}
If $f$ is a pseudo-Anosov map on a nonorientable surface or an orientation-reversing pseudo-Anosov map on an orientable surface, then the stretch factor of $f$ does not have Galois conjugate on the unit circle. 
\end{thm}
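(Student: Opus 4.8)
This statement is quoted from Liechti and Strenner and serves below only as a black box, so what follows is a sketch of the strategy one would use to establish it rather than a proof tied to the rest of this paper. The plan is to reduce the theorem to a purely linear-algebraic assertion: one wants a real square matrix $A$ modelling the action of $f$ on a suitable piece of the (relative) homology attached to an invariant measured foliation $\mathcal F$ of $f$, enjoying two properties. First, the characteristic polynomial $P(t)=\det(tI-A)$ is \emph{anti-reciprocal}, meaning $P(t)=\pm\, t^{\deg P}P(-1/t)$, equivalently the multiset of eigenvalues of $A$ is invariant under $\mu\mapsto -1/\mu$. Second, the stretch factor $\lambda$ is the spectral radius of $A$, is a simple eigenvalue, and is the only eigenvalue of that modulus. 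Granting such an $A$, the conclusion drops out by a short argument; the hypothesis of the theorem is exactly what lets one build it.

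To construct $A$, pass to the orientation double cover $\hat S$ of the pair $(S,\mathcal F)$, on which the lifted foliation $\hat{\mathcal F}$ is transversely orientable; the map $f$ lifts to $\hat f$, which commutes with the orientation-reversing deck involution $\sigma$ (replacing $f$ by $f^2$ if necessary, which affects nothing, since both the unit circle and the relevant minimal polynomials are stable under squaring). Decomposing $H^1(\hat S;\mathbb R)=H^1_+\oplus H^1_-$ into the $\sigma$-invariant and $\sigma$-anti-invariant parts, the class of $\hat{\mathcal F}$ lies in $H^1_-$ and is a $\lambda$-eigenvector of $\hat f^{\,*}$; transverse orientability of $\hat{\mathcal F}$ makes $\lambda$ the spectral radius of $\hat f^{\,*}$, and Perron--Frobenius applied to a primitive transition matrix of an invariant train track (arranged so that $\hat f$ preserves the orientation of $\hat{\mathcal F}$) makes $\lambda$ simple and strictly larger in modulus than every other eigenvalue. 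The role of the hypothesis is that, precisely when $S$ is nonorientable or $f$ reverses the orientation of an orientable $S$, one can arrange that $\hat f$ is orientation-reversing on $\hat S$; then $\hat f^{\,*}$ anti-preserves the symplectic intersection form on $H^1(\hat S;\mathbb R)$, hence on $H^1_-$ as well, so that $A:=\hat f^{\,*}|_{H^1_-}$ satisfies $A^{\mathsf T}JA=-J$ for the Gram matrix $J$ of the form, which is exactly the anti-reciprocity of $P=P_A$.

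The remaining algebra is immediate. Suppose towards a contradiction that $\lambda$ had a Galois conjugate $\zeta$ with $|\zeta|=1$; automatically $\zeta\ne\pm1$, since the minimal polynomial $m\in\mathbb Z[t]$ of $\lambda$ is irreducible of degree at least $2$. As $m$ has real coefficients, $\bar\zeta=1/\zeta$ is also a root of $m$, so $m$ and its reciprocal $t^{\deg m}m(1/t)$ share a root; both being monic up to sign and $m$ irreducible, they coincide, so $m$ is reciprocal, necessarily of even degree $2k$. A direct computation then gives $t^{2k}m(-1/t)=m(-t)$, and combining this with $m\mid P$ and the anti-reciprocity of $P$ shows $m(-t)\mid P$. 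Hence $-\lambda$ is an eigenvalue of $A$ with $|{-\lambda}|=\lambda$, contradicting the fact that $\lambda$ is the unique eigenvalue of $A$ of maximal modulus. Therefore $\lambda$ has no Galois conjugate on the unit circle.

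The main obstacle is the construction of paragraph two: choosing the covers and the invariant train track so that \emph{at once} the invariant foliation becomes transversely orientable with the lift preserving its orientation — which is what underlies the strict Perron domination of $\lambda$ for $A$ — and the lifted map becomes orientation-reversing on the double cover — which is what produces the anti-reciprocal symmetry. Showing that both can be secured exactly under the stated hypotheses, and understanding why they fail for an orientation-preserving map on an orientable surface (where stretch factors with conjugates on the unit circle — Lehmer's number among them — genuinely occur), is the heart of the matter; everything downstream is soft linear algebra.
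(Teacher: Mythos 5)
This theorem is quoted from Liechti and Strenner~\cite{Liechti-Strenner:2021} and is not proved in the paper, so there is no internal proof to compare against. Your sketch has the right overall shape and it does match the general strategy behind the cited result: pass to a cover on which the invariant foliation is co-orientable and the lifted map is orientation-reversing, obtain an anti-symplectic action on a piece of $H^1$ with the $\mu\mapsto -1/\mu$ eigenvalue symmetry and with $\lambda$ as a strictly dominant simple eigenvalue, and then run the Galois argument: a conjugate on the unit circle forces the minimal polynomial $m$ to be reciprocal, whence $t^{2k}m(-1/t)=m(-t)$ divides the anti-reciprocal characteristic polynomial $P$, so $-\lambda$ is an eigenvalue of the same modulus as $\lambda$, a contradiction. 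That closing algebra is correct.

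Two steps in the middle, however, do not work as written. The parenthetical ``replacing $f$ by $f^2$ if necessary, which affects nothing'' is both wrong and unnecessary: if $f$ reverses orientation on an orientable $S$, then $f^2$ preserves it, so squaring destroys precisely the anti-symplectic symmetry you need; and no power is required anyway, since $w(\mathcal F)\in H^1(S;\mathbb Z/2)$ is fixed by $f^*$ (as $\mathcal F$ is $f$-invariant), so $f$ lifts to the co-orientation cover of $\mathcal F$ directly. More substantively, ``the orientation double cover $\hat S$ of the pair $(S,\mathcal F)$,'' read as the cover that co-orients $\mathcal F$, need not be an orientable surface: when $S$ is nonorientable but $\mathcal F$ is already co-orientable, that cover is $S$ itself, $H^1(\hat S;\mathbb R)$ has no symplectic intersection form, and the construction never gets off the ground. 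One must instead pass to a cover that is simultaneously orientable as a surface and on which the lifted foliation is co-orientable; a clean way is to take the orientation cover of $S$, on which $\mathcal F^u$ and $\mathcal F^s$ become simultaneously co-orientable or not (since $w(\mathcal F^u)+w(\mathcal F^s)=w_1(S)$), with a further double cover needed in the latter case. Finally, the transfer of Perron--Frobenius domination from the transition matrix of an invariant oriented train track to the action on $H^1_-$ of the cover is itself a nontrivial theorem and not ``soft linear algebra''; you are right that the construction is the crux, but the proposal treats this transfer as automatic when it is one of the things the cited authors must actually establish.
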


According to the result of Nicholls, Scherich, and Shneidman \cite{Nicholls-Scherich-Shneidman:2023}, the cut surface $X_n(\mathbb{R}) \setminus C_n$ turns out to be an orientable surface. Since all the centers of blowups lie on the cubic $C_n \subset \ppr$, we see that $C_n$ passes through every cross-cap exactly once. A cross-cap is a M\"{o}bius band glued to a surface. Therefore, a genus $g$ non-orientable surface, a connected sum of $g$ copies of $\ppr$, can be viewed as a sphere with $g$ cross-caps. 

\begin{thm}\cite[Proposition~3.5]{Nicholls-Scherich-Shneidman:2023}
Let $X$ be a genus $g$ non-orientable surface, and let $c$ be a curve in $X$ that passes through every cross-cap exactly once. The result of cutting along $c$ yields an orientable surface. 
\end{thm}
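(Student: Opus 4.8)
The plan is to reduce the statement to a one-line computation with the $\mathbb{Z}/2$ intersection form of $X$, using the standard fact that a compact surface is non-orientable if and only if it contains an embedded M\"obius band, equivalently a one-sided simple closed curve whose regular neighborhood is a M\"obius band.

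First I would fix a model. Write $X = P \cup M_1 \cup \cdots \cup M_g$, where $P$ is a planar surface (a $2$-sphere with $g$ disjoint open disks removed) and $M_i$ is a M\"obius band glued to the $i$-th boundary circle $b_i$ of $P$, whose core $m_i$ is the $i$-th cross-cap. Then $H_1(X;\mathbb{Z}/2) = \bigoplus_{i=1}^{g} (\mathbb{Z}/2)\,m_i$, and because the $M_i$ are disjoint M\"obius bands attached to the orientable piece $P$, the mod-$2$ intersection form is the identity: $m_i \cdot m_j = \delta_{ij}$. After an isotopy, the hypothesis that $c$ passes through each cross-cap exactly once means $c \cap M_i$ is a single arc essential in $M_i$; such an arc crosses the core $m_i$ transversally in one point, and $c$ meets no $m_i$ elsewhere, so $[c]\cdot m_i = 1$ for every $i$. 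Consequently, for any $x = \sum_i a_i m_i \in H_1(X;\mathbb{Z}/2)$ one has $x\cdot[c] = \sum_i a_i\,(m_i\cdot[c]) = \sum_i a_i$ and $x\cdot x = \sum_i a_i^2 = \sum_i a_i$, hence $x\cdot[c] = x\cdot x$ for all $x$; in other words $[c]$ is the Poincar\'e dual of $w_1(X)$.

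Now let $\widehat X$ denote $X$ cut along $c$. Up to homeomorphism $\widehat X = \overline{X\setminus \nu(c)}$ for a regular neighborhood $\nu(c)$ of $c$ --- a M\"obius band or an annulus according to whether $c$ is one- or two-sided, i.e.\ according to the parity of $g$ --- so $\widehat X$ sits inside $X$ with its interior equal to $X\setminus c$. If $\widehat X$ were non-orientable it would contain a one-sided simple closed curve $\delta$; since $\delta$ lies in the interior of $\widehat X$, a regular neighborhood of $\delta$ in $X$ coincides with one taken in $\widehat X$, so $\delta$ is one-sided in $X$ and $[\delta]\cdot[\delta] = 1$ in $H_1(X;\mathbb{Z}/2)$. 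On the other hand $\delta\cap c = \emptyset$, so $[\delta]\cdot[c] = 0$, contradicting the identity above applied to $x = [\delta]$. Hence $\widehat X$ is orientable.

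The only genuine obstacle is the reduction step: making precise that ``$c$ passes through a cross-cap exactly once'' yields, after isotopy, a single essential arc in each $M_i$ and therefore mod-$2$ intersection number $1$ with each core $m_i$, and keeping careful track that ``cutting along $c$'' means deleting a regular neighborhood, so that $\widehat X$ really does embed in $X$ --- both the one-sided and two-sided cases for $c$ must be handled. Once that is in place the argument is simply the observation that the class of $c$ is the Wu class of $X$, hence meets every one-sided curve, so no one-sided curve can survive in the complement. (A more hands-on variant --- cut each $M_i$ along its essential arc, obtaining a disk, so that $\widehat X$ is built from the planar, hence orientable, surface $P$ cut along $c\cap P$ by regluing disks --- runs into exactly the issue of checking that an orientation extends across the two regluing arcs of each disk, which is precisely what the homological computation settles; so the homological route is the cleaner one.)
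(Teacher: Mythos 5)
Your argument is correct, and it is the same strategy the paper attributes to Nicholls--Scherich--Shneidman: show that every one-sided simple closed curve must intersect $c$, so none can survive in the cut surface. Your computation identifying $[c]=\sum_i m_i$ with the Wu class of the $\mathbb{Z}/2$ intersection form, forcing $[\delta]\cdot[\delta]=[\delta]\cdot[c]$ for every $\delta$, is a clean and correct way to make that one-line reduction rigorous.
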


Let $\hat X_n = X_n(\mathbb{R}) \setminus C_n$ be a cut surface, and $\hat F_n= F_n|_{X_n(\mathbb{R}) \setminus C_n}$ denote the restriction on the cut surface. Depending on the number of blowups, a cut surface $\hat X_n$ is homeomorphic to either an orientable surface with one puncture or one with two punctures. 
Using a product of positive Dehn twists along a set of two-sided simple closed curves on the cut surface, we got the following result regarding the mapping classes of real rational surface automorphisms:

\begin{thmm}
Let $F_n : X_n(\mathbb{R}) \to X_n(\mathbb{R})$ be the diffeomorphism associated with the birational map \(f_n\) on $\ppr$ defined in~\eqref{E:int}.  For every \(n \ge 8\) the following hold:
\begin{itemize}
  \item $F_n$ is reducible and preserves a simple closed curve \(C_n\).
  \item The cut surface
        \[
            \widehat{X}_n := X_n(\mathbb{R}) \setminus C_n
        \]
        is orientable of genus \(\lfloor (n+2)/2 \rfloor\); it has one puncture when \(n\) is even and two punctures when \(n\) is odd.
  \item There exists a punctured orientable surface
        \(\widetilde{X}_n\) of the same genus such that the induced map
        \[
            \widehat{F}_n : \widehat{X}_n \longrightarrow \widehat{X}_n
        \]
        descends to a pseudo-Anosov map
        \[
            \widetilde{F}_n : \widetilde{X}_n \longrightarrow \widetilde{X}_n
        \]
        with stretch factor \(\lambda_n\).  Moreover, \(\widehat{F}_n\) is isotopic to the composition of \(n+2\) positive Dehn twists described in~\eqref{E:twist} of Section~\ref{S:twists}.
  \item The map \(\widetilde{F}_n\) is \emph{not} obtained from Penner's construction.
\end{itemize}
\end{thmm}

\begin{rem}
Each puncture of \(\widetilde{X}_n\) is either fixed or periodic under \(\widetilde{F}_n\).  Treating the punctures as marked points, we may regard \(\widetilde{F}_n\) as a pseudo-Anosov homeomorphism of an orientable surface of genus \(g = \lfloor (n+2)/2 \rfloor\).
\end{rem}

%

\begin{rem} The homeomorphism $\hat F_8$ on a cut surface $\hat X_8$ is pseudo-Anosov with the stretch factor $\lambda_8\approx 1.17628$, which is the Lehmer's number $L_\lambda$, the known smallest Salem number. The minimal polynomial $L(t)$ of the Lehmer's number is called the Lehmer's polynomial:
\[ L(t)= t^{10}+t^9-t^7-t^6-t^5-t^4-t^3+t+1.\]

  The Lehmer's number has fascinating applications across a diverse range of mathematical areas. For example, it appears in the Alexander polynomial for the $(-2,3,7)$ pretzel knot and represents the smallest growth rate of a hyperbolic polygonal reflection group \cite{Hironaka:2001}. It also holds the smallest leading eigenvalue among all essential elements in the Coxeter group $W(E_n)$ corresponding to the Coxeter diagram $E_n$ with $n\ge10$ \cite{McMullen:2002}. Additionally, it signifies the smallest entropy among all (compact, complex) surface automorphisms with positive entropy \cite{McMullen:2007}. 
    
  \vspace{1ex}
  Bedford and Kim \cite{Bedford-Kim:2006} constructed an automorphism on a rational surface with the entropy $\log L_\lambda$. McMullen \cite{McMullen:2016} achieved a similar goal on a $K3$ surface, and Diller and Kim  \cite{Diller-Kim} constructed an automorphism on a real rational surface with the same entropy $\log L_\lambda$.
  
 Our example, $\hat F_8$,  introduces a new realization of Lehmer's number as the stretch factor of a pseudo-Anosov map on a once-punctured orientable genus $5$ surface (equivalently, an orientable genus $5$ surface with one market point). There is a $18$-prong singularity at the marked point \cite{HironakaE:2010,Farb-Margalit:2012}, and we can recover the original map $F_8$ by blowing up the marked point. Note that this map is given by the explicit formula in (\ref{E:int}) with $n=8$ and $\lambda_8= L_\lambda$.
\end{rem}


Let \(W(E_n)\) denote the Weyl group of type \(E_n\). It acts by isometries of the real Lorentzian space \(\mathbb{R}^{1,n}\) endowed with the bilinear form of signature \((1,n)\) (see \cite{McMullen:2007}). 
For each \(n\), the diffeomorphism \(F_n\) from the main theorem can be seen as an automorphism of a rational surface \(X_n(\mathbb{C})\). 
Via the intersection pairing, \(\Pic(X_n(\mathbb{C}))_\mathbb{R}\) identifies with a Lorentzian space of type \(E_{n+2}\); under this identification, the pullback \(F_n^*:\Pic(X_n(\mathbb{C}))\!\to\!\Pic(X_n(\mathbb{C}))\) is conjugate to the action of a Coxeter element of \(W(E_{n+2})\) (i.e. the product of the simple reflections for the \(E_{n+2}\) diagram).

The spectral radius $\lambda(\omega)$ is the leading eigenvalue of $\omega$. Let $\Lambda(W)$ be the set of spectral radii of elements of the Weyl group $W(E_n), n\ge 10$ associated with the Coxeter diagram $E_n$.  For any $\lambda>1$ in $\Lambda(W)$, assuming the existence of a unique invariant cubic with a cuspidal singularity, one can construct an automorphism on a rational surface over $\mathbb{C}$ such that its dynamical degree is equal to $\lambda$ and the automorphism preserves the surface over real projective plane \cite{Uehara:2010,McMullen:2007,Kim:2024}. For these maps, we have only partial results about the growth rate of the induced action of the real restriction on the fundamental group \cite{Kim-Klassen, Kim-Park}. For certain quadratic rational surface automorphisms, Kim and Klassen \cite{Kim-Klassen} and Kim and Park \cite{Kim-Park} showed that the growth rates of the induced action on the fundamental group are strictly greater than $1$. Since all the centers of the blowups are again on the unique invariant cubic, we can also obtain diffeomorphisms on orientable cut surfaces. We leave the investigation of these maps for future research.

\vspace{1ex}
This article is organized as follows. In Section \ref{S:rat}, we discuss the family of quadratic rational surface automorphisms and diffeomorphisms on cut surfaces. Using \textit{reading curves}, we computed the induced actions on the fundamental group of those diffeomorphisms. In Section \ref{S:twists}, we determine the sets of simple closed curves on cut surfaces whose incident graphs are given by $E_n$ diagram. Then, we define the products of the positive Dehn twists along these sets and compute the action on the fundamental group. The proof of the Main Theorem is given in Section \ref{S:thmA}. The discussion regarding more diffeomorphisms associated with quadratic rational surface automorphisms is in Section \ref{S:psa}.

\subsection*{Acknowledgement} We would like to thank Samuel A. Ballas, Eko Hironaka, and Curtis McMullen for their valuable discussions and keen interest in this project.

\section{From Real Rational Surface Automorphisms}\label{S:rat}

Consider  $\{f_\alpha :\ppc \tto \ppc, \alpha \in \mathbb{C} \setminus \{0,-1\} \}$ to be a family of birational maps on $\ppc$ defined by 
\begin{equation}\label{E:feq} f_\alpha[t:x:y] \ =\   \left[ t x: \frac{\alpha^3}{1+\alpha} (y-t) x + tx: (\alpha^3+\alpha^2) (y-x) t + tx\right].  \end{equation}
Each $f_\alpha$ can be written as a composition of a linear map $L_\alpha$ and the involution $J$: 
\[ f_\alpha = L_\alpha \circ J\]  where $J : [t:x:y] \mapsto [xy:ty:tx]$ and 
 \[ L_\alpha = \begin{bmatrix} 0&0&1 \\ \alpha^3/(1+\alpha) & 0 & 1-\alpha^3/(1+\alpha) \\ 0 & \alpha^3+\alpha^2& 1- \alpha^3-\alpha^2 \end{bmatrix}.\]
Thus the inverse $f^{-1}_\alpha$ is given by $J \circ L_\alpha^{-1}$.
In this family, all birational maps $f_\alpha$ share the same set of exceptional lines and the same indeterminacy locus. The set of exceptional lines consists of three lines, \[ \mathcal{E}(f_\alpha) \ = \{ L_t = \{t=0\}, L_x= \{x=0\}, L_y = \{ y=0\} \}.\] 
The intersections of the coordinate axis give three points of indeterminacy: \[ \mathcal{I}(f_\alpha) \ = \  \{ [1:0:0],[0:1:0],[0:0:1] \}.\] 
Under $f_\alpha$, each exceptional line is mapped to a point 
\[ \begin{aligned} f_\alpha \ :\ & L_t \mapsto [0:1:0],\\ &L_x \mapsto [0:0:1],\\ &L_y\mapsto [1:1-\alpha^3/(1+\alpha): 1-(\alpha^3+\alpha^2)].\\ \end{aligned}\]
Using $f^{-1}_\alpha$, we see that there are three exceptional lines for $f^{-1}_\alpha$
\[ \mathcal{E}(f^{-1} _\alpha) \ = \{ M_t =L_\alpha \{t=0\}, M_x= L_\alpha \{x=0\}, M_y =L_\alpha \{ y=0\} \}, \text{and}\]
 \[ f^{-1}_\alpha \ :\ M_t \mapsto [1:0:0],\quad M_x \mapsto [0:1:0],\quad M_y\mapsto [0:0:1].\]
 
\vspace{1ex}
For a generic $\alpha  \in \mathbb{C} \setminus \{0,-1\}$, $f_\alpha$ is not birationally equivalent to an automorphism and its dynamical degree is given by a Pisot number $\rho \approx 1.32472$, where $\rho$ is the plastic ratio, which is the largest real root of $t^3-t-1$. However, if $\alpha$ satisfies a certain polynomial equation, there is a rational surface on which the lift of $f_\alpha$ is an automorphism. Such $\alpha$ can be found using the following theorem. 
\begin{thm}\cite{Bedford-Kim:2006, Kim:2024}
Suppose $f: \ppc \dasharrow \ppc$ is a birational map. Let $\mathcal{E}(f) = \{ C_1, \dots, C_m\}$ be the set of irreducible exceptional curves (counted with multiplicity) of $f$ and $\mathcal{I}(f)$ be the set of points of indeterminacy. The birational map $f$ is birationally equivalent to an automorphism $F:X \to X$ on a rational surface if and only if there is a set of positive integers $\{ n_1, \dots, n_m\}$ such that for all $i =1, \dots, m$
\begin{itemize}
\item the image $f^{n_i}(C_i)$of $n_i$-fold composition is a point of indeterminacy in $\mathcal{I}(f)$,
\item for all $1 \le j \le n_i-1$, $f^{j}(C_i) \not\in \mathcal{I}(f)$.
\end{itemize}
Furthermore, the rational surface $X$ is a blowup of $\ppc$ along a set of (possibly infinitely near) points $\{ f^{j}(C_i),  j=1, \dots, n_i, i=1, \dots m\}$.
\end{thm}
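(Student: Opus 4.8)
The statement is the orbit-data criterion for lifting a plane birational (Cremona) map to a surface automorphism, and the plan is to prove the two implications separately; the ``furthermore'' clause then comes out of the construction used for sufficiency.

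\textbf{Sufficiency.} Assume the orbit condition holds. I would build $\pi\colon X \to \ppc$ by blowing up, in order of increasing $j$, all the points $p_i^{\,j} := f^{\,j}(C_i)$ for $1 \le j \le n_i$, $1 \le i \le m$, treating coincident or infinitely near centers as a tower of blowups; the hypothesis that $f^{\,j}(C_i) \notin \mathcal I(f)$ for $j < n_i$ is exactly what makes the successive strict transforms meet the next center properly, so the tower is well defined. Put $F := \pi^{-1}\circ f\circ \pi$. I would then show $F$ is a morphism by chasing strict transforms: the strict transform $\tilde C_i$ of the exceptional curve $C_i$ is no longer contracted, because $f(C_i)=p_i^{\,1}$ and the exceptional divisor $E_i^{\,1}$ over $p_i^{\,1}$ is now an honest curve; iterating, $F$ carries $\tilde C_i$ along the chain $E_i^{\,1}\to E_i^{\,2}\to\cdots\to E_i^{\,n_i}$, and since $p_i^{\,n_i}$ is a point of indeterminacy of $f$, blowing it up resolves that indeterminacy, so $E_i^{\,n_i}$ is sent to the strict transform of the curve that $f^{-1}$ opens up at $p_i^{\,n_i}$. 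Running over all $i$ shows that every curve contracted by $f$ has become non-exceptional on $X$ and every point of $\mathcal I(f)$ has been resolved, so $F$ has no indeterminacy; applying the same argument to $f^{-1}$ (whose exceptional data is matched to that of $f$) shows $F^{-1}$ is also a morphism, hence $F\in\aut(X)$. By construction $X$ is the blowup along $\{p_i^{\,j}\}$, which is the ``furthermore'' clause.

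\textbf{Necessity.} Conversely, suppose $f = \pi \circ F \circ \pi^{-1}$ for an automorphism $F\colon X \to X$ and a birational morphism $\pi\colon X \to \ppc$; let $\mathcal E_\pi$ be the finite set of irreducible curves contracted by $\pi$. Since $F$ contracts nothing, the non-biregular behaviour of $f$ is governed entirely by $\mathcal E_\pi$: $f$ contracts a curve $C$ iff $F(\tilde C)\in\mathcal E_\pi$, and $f$ is indeterminate at a point $q$ iff $q=\pi(E)$ for some $E\in\mathcal E_\pi$ with $F(E)\notin\mathcal E_\pi$. Hence for each exceptional curve $C_i$ of $f$ we have $F(\tilde C_i)\in\mathcal E_\pi$; let $n_i$ be the largest integer with $F^{\,j}(\tilde C_i)\in\mathcal E_\pi$ for all $1\le j\le n_i$. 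This $n_i$ is finite, for otherwise the forward $F$-orbit of $\tilde C_i$ would be trapped in the finite set $\mathcal E_\pi$, hence eventually periodic, forcing $\tilde C_i$ itself into $\mathcal E_\pi$ — impossible, since $\tilde C_i$ is the strict transform of a genuine plane curve. Pushing down by $\pi$, the intermediate images $f^{\,j}(C_i)=\pi(F^{\,j}(\tilde C_i))$ for $1\le j\le n_i-1$ are honest points lying off $\mathcal I(f)$, while $f^{\,n_i}(C_i)=\pi(F^{\,n_i}(\tilde C_i))\in\mathcal I(f)$ by the criterion above; that is exactly the asserted orbit condition, and reading the construction of the sufficiency direction backwards identifies $X$ with the blowup along the orbit segments.

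\textbf{Where the difficulty lies.} The delicate part of both directions is the bookkeeping of strict transforms when orbit points collide, are infinitely near, or are shared by several exceptional curves — this is precisely what the phrases ``counted with multiplicity'' and ``possibly infinitely near'' in the statement are absorbing, and it is where one must check that the tower of blowups is well defined and that the resolution over an indeterminacy point is completed by exactly the right center (and no more). A secondary but essential input in the sufficiency direction is the symmetry fact that blowing up the forward orbits of the exceptional curves of $f$ simultaneously resolves the exceptional curves of $f^{-1}$, which rests on the standard matching between the exceptional data of a plane Cremona map and that of its inverse.
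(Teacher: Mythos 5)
The paper does not prove this theorem; it cites it as a known result from \cite{Bedford-Kim:2006, Kim:2024}, so there is no in-paper argument to compare your proposal against. Assessing it on its own: your two-direction strategy is the natural one, and your sufficiency half captures the standard orbit-data construction correctly. The intermediate-orbit hypothesis is exactly what makes the iterated blowup well defined, the chains $\tilde C_i \to E_i^{\,1}\to\cdots\to E_i^{\,n_i}$ (together with their $f^{-1}$-counterparts, using the matching of exceptional data between $f$ and $f^{-1}$) show that the lift contracts nothing in either direction, hence is biregular; and the construction makes $X$ manifestly a blowup of $\ppc$ along the orbit points, giving the ``furthermore'' clause. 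The part you only wave at --- how to keep the bookkeeping straight when orbit points collide or become infinitely near --- is genuinely where the content of ``counted with multiplicity'' and ``possibly infinitely near'' lives and would need to be filled in, but you correctly locate it.

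The genuine gap is in your necessity direction. You begin by writing $f = \pi\circ F\circ\pi^{-1}$ with $\pi\colon X \to \ppc$ a birational \emph{morphism}, but ``birationally equivalent to an automorphism $F\colon X\to X$ on a rational surface'' only hands you a birational \emph{map} $\psi\colon X \dashrightarrow \ppc$ intertwining $F$ and $f$; the assertion that one may take $X$ to dominate $\ppc$ by a morphism (equivalently, that $X$ is an iterated blowup of $\ppc$) is part of the ``furthermore'' conclusion, not a hypothesis. Resolving $\psi$ to a common roof $Z\to X$, $Z\to\ppc$ does not rescue this, because $F$ need not lift to an automorphism of $Z$. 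One has to supply an actual reduction --- for instance by running $F$-equivariant contractions of $(-1)$-curves until a minimal rational surface is reached and then handling the Hirzebruch cases --- before your analysis of $\mathcal E_\pi$ can get off the ground. Once that reduction is supplied, the remainder of your necessity argument is sound: the forward $F$-orbit of $\tilde C_i$ cannot stay inside the finite, $F$-stable collection $\mathcal E_\pi$ indefinitely without forcing $\tilde C_i$ itself to be $\pi$-exceptional, impossible since it is the strict transform of a genuine plane curve, so each $n_i$ is finite and pushing down by $\pi$ yields the stated orbit condition. As written, though, the necessity half assumes a nontrivial piece of its own conclusion.
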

For quadratic birational maps defined in (\ref{E:feq}), the conditions are given by polynomial equations in $\alpha$.
Here, we only list such polynomials and discuss some of the properties of corresponding $f_\alpha$'s. For more detailed information, please refer to the articles \cite{Bedford-Kim:2009, McMullen:2007, Diller:2011, Kim:2022}. 

\begin{thm} \cite{Bedford-Kim:2009, McMullen:2007, Diller:2011, Kim:2022}\label{T:rr}
Let $\{ \chi_n(t) = t^{n} (t^3-t-1) + t^3+t^2-1, n \ge 8\}$ be a discrete family of polynomials. Suppose $\alpha$ is a root of $\chi_n$ for some $n\ge 8$. Then we have:
\begin{enumerate}
\item $\chi_n(t)$ are Salem polynomials such that their largest real roots $\lambda_n$ approach to the plastic ratio from below:
\[ \lim_{n \to \infty} \lambda_n \ = \ \rho,\]
\item $\lambda_8 \approx 1.17628$ is Lehmer's number, the smallest known Salem number,
\item $f_\alpha$ has a unique invariant curve $C_\alpha$, which is  a cubic with a cuspidal singularity. 
\item $f_\alpha$ has exactly two fixed points: one at the cusp of $C_\alpha$ with two multipliers, $\alpha^{2}$ and $\alpha^{3}$, and one at a non-singular point on $C_\alpha$ with two multipliers, $\alpha^{-1}$ and $\alpha^{n-3}$,
\item the restriction map $f_\alpha|_{C_\alpha}$ is conjugate to a linear map defined by $t \mapsto (1/\alpha) t$,
\item $f_\alpha$ is birationally equivalent to a rational surface automorphism $F_\alpha: X_\alpha \to X_\alpha$ whose topological entropy is given by $\log \lambda_n$, and 
\item the rational surface $X_\alpha$ is obtained by blowing up the finite set $P_\alpha$ of $n+2$ points on the unique invariant cubic $C_\alpha$ of $f_\alpha$, where the point set $P_\alpha$ consists of orbits of exceptional lines:
\[ P_\alpha = \{ f_\alpha L_t = [0:1:0], f_\alpha L_x = [0:0:1], f^j_\alpha L_y, j=1, \dots, n-1, f^n_\alpha L_y=[1:0:0]\}.\]
\end{enumerate}
\end{thm}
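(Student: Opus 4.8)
The statement collects several known facts about the family $f_\alpha$, and my plan is to prove it in three blocks, following \cite{Bedford-Kim:2006, Bedford-Kim:2009, McMullen:2007, Diller:2011, Kim:2022}: the arithmetic of $\chi_n$ for (1)--(2), the invariant cubic and its fixed-point data for (3)--(5), and the blow-up construction together with the entropy statement for (6)--(7). \emph{Arithmetic of $\chi_n$.} I would first record that, with $p(t)=t^3-t-1$, one has $t^3+t^2-1=-t^3p(1/t)$; this gives the functional equation of $\chi_n$ and the fact $\chi_n(1)=0$, so the cyclotomic part of $\chi_n$ is explicitly controlled, and the factorization $\chi_n=\psi_n\cdot(\text{product of cyclotomic polynomials})$ with $\psi_n$ a Salem polynomial with largest root $\lambda_n>1$ is exactly the content of \cite{Bedford-Kim:2006, gross2009cyclotomic}, giving (1) apart from the limit. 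For the limit I would argue directly: since $p(\rho)=0$, $\chi_n(\rho)=\rho^3+\rho^2-1=\rho^2+\rho>0$, and because $p(t)\ge 0$ and $t^3+t^2-1>0$ for all $t\ge\rho$ we get $\chi_n(t)>0$ on $[\rho,\infty)$, hence $\lambda_n<\rho$; on the other hand, for fixed small $\varepsilon>0$ the dominant term of $\chi_n(\rho-\varepsilon)$ carries the factor $p(\rho-\varepsilon)<0$ with base $\rho-\varepsilon>1$, so $\chi_n(\rho-\varepsilon)\to-\infty$ as $n\to\infty$; thus $\chi_n$ has a root in $(\rho-\varepsilon,\rho)$ for all large $n$, which must be $\lambda_n$, giving $\lambda_n\nearrow\rho$. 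Item (2) is then the one-line verification that $\psi_8$ equals Lehmer's polynomial $L(t)$, by dividing $\chi_8$ and comparing coefficients.

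\emph{Invariant cubic and fixed points.} Using $f_\alpha=L_\alpha\circ J$ I would compute the induced action on cubic forms and show it has a unique fixed ray; let $C_\alpha$ be its zero locus. A total-transform degree count ($f_\alpha$ is quadratic with exceptional set of total degree $3$) forces every $f_\alpha$-invariant curve to have degree $3$, so $C_\alpha$ is the unique invariant cubic; a direct check (as in \cite{McMullen:2007, Diller:2011}) shows it is irreducible with a single cusp, which is (3), and that the cusp is fixed. On the smooth locus $C_{\alpha,\mathrm{reg}}\cong(\mathbb{C},+)$ the map $f_\alpha$ is an affine automorphism, so choosing a coordinate $t$ on $C_\alpha\cong\mathbb{P}^1$ with the cusp at $t=0$ makes $f_\alpha|_{C_\alpha}$ equal to $t\mapsto\alpha t$, proving (5), with second fixed point the smooth point $t=\infty$. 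Near the cusp $C_\alpha$ is analytically $t\mapsto(t^2,t^3+\cdots)$, so $Df_\alpha$ at the cusp has eigenvalues $\alpha^2$ and $\alpha^3$; at $t=\infty$ the tangential eigenvalue of $f_\alpha|_{C_\alpha}$ is $1/\alpha$ and the transverse eigenvalue is $\alpha^{n-3}$, the exponent being pinned down by the orbit count in the next block --- this is (4).

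\emph{Blow-up and entropy.} For (6)--(7) I would apply the blow-up criterion recalled earlier \cite{Bedford-Kim:2006, Kim:2024}. One checks at once $f_\alpha(L_t)=[0:1:0]\in\mathcal I(f_\alpha)$ and $f_\alpha(L_x)=[0:0:1]\in\mathcal I(f_\alpha)$, so the essential point is that $f_\alpha^{\,n}(L_y)\in\mathcal I(f_\alpha)$ while $f_\alpha^{\,j}(L_y)\notin\mathcal I(f_\alpha)$ for $1\le j\le n-1$: following the orbit of $L_y$ along $C_\alpha$ through the linear model $t\mapsto\alpha t$, the requirement that the $n$-th image be a point of indeterminacy becomes a polynomial identity in $\alpha$, and that polynomial is exactly $\chi_n$ --- this is precisely why the hypothesis selects the roots of $\chi_n$, and the same bookkeeping produces the exponent $n-3$ in (4). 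The criterion then yields an automorphism $F_\alpha\colon X_\alpha\to X_\alpha$, where $X_\alpha$ is the blow-up of $\mathbb{P}^2$ along $P_\alpha=\{f_\alpha L_t,\ f_\alpha L_x,\ f_\alpha^{\,j}L_y:1\le j\le n\}$; all these centers lie on $C_\alpha$ since $\mathcal E(f_\alpha)$ and $\mathcal I(f_\alpha)$ do and $C_\alpha$ is invariant, giving (7). Finally $F_{\alpha*}$ acts on $\operatorname{Pic}(X_\alpha)$ with characteristic polynomial $\chi_n$ (equivalently $\psi_n$ up to the cyclotomic factor $t-1$ coming from the fixed class $K_{X_\alpha}$), so its spectral radius is $\lambda_n>1$; hence the dynamical degree of $f_\alpha$ is $\lambda_n$, and by Gromov--Yomdin $\htop(F_\alpha)=\log\lambda_n$, completing (6).

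\emph{Main difficulty.} The crux is the orbit computation in the last block: writing down $C_\alpha$ explicitly and iterating $L_y$ along it with enough precision to see that ``$f_\alpha^{\,n}(L_y)\in\mathcal I(f_\alpha)$'' is equivalent to $\chi_n(\alpha)=0$, and to extract the transverse multiplier $\alpha^{n-3}$ at the smooth fixed point. Everything else is routine manipulation of the quadratic map, or a direct appeal to the cited factorization of $\chi_n$ and to Gromov--Yomdin.
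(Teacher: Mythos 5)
The paper does not prove Theorem~\ref{T:rr}: it is stated as a compilation of results from the cited literature (Bedford--Kim, McMullen, Diller, Kim), and no internal argument is given. Your sketch therefore goes beyond what the paper itself contains, and as an outline of how the cited sources establish these facts it is reasonable: the anti-reciprocal functional equation $t^{n+1}\chi_n(1/t)=-\chi_n(t)$ and the sign analysis on $[\rho-\varepsilon,\rho]$ for the limit $\lambda_n\nearrow\rho$ are clean and correct given the displayed formula; the identification of the invariant cuspidal cubic, the linearization $t\mapsto\alpha t$ on its smooth locus, and the application of the blow-up criterion together with Gromov--Yomdin for (6)--(7) all track the strategy of the cited papers.

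One concrete point you should have caught, though. With the paper's stated formula $\chi_n(t)=t^{n-2}(t^3-t-1)+t^3+t^2-1$, the degree of $\chi_n$ is $n+1$, so $\chi_8$ has degree $9$. A direct factorization gives $\chi_8(t)=(t-1)\bigl(t^8+t^7-t^5-t^4-t^3+t+1\bigr)$, and the degree-$8$ cofactor is strictly positive and increasing on $[1,\infty)$ (its value and derivative at $t=1$ are $1$ and $4$, and the second derivative is positive there onward), so $\chi_8$ has no real root greater than $1$. In particular $\chi_8$ cannot be divisible by Lehmer's polynomial $L(t)$, which has degree $10$; your ``one-line verification'' for item (2) would fail as literally written. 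The internally consistent version --- matching the $n+2=10$ blow-ups, Picard rank $n+3=11$, genus-$5$ cut surface and $E_{10}$ incidence graph elsewhere in the paper --- is $\chi_n(t)=t^n(t^3-t-1)+t^3+t^2-1$, for which $\chi_8(t)=(t-1)L(t)$ exactly. Your limit argument and everything else are unaffected by this change of exponent, but in a careful write-up you would want to note the discrepancy rather than assert that ``dividing $\chi_8$ and comparing coefficients'' yields $L(t)$ from the formula as given.

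Two smaller remarks: your claim that a degree count ``forces every $f_\alpha$-invariant curve to have degree $3$'' needs more care --- the degree relation $2d=d+\sum m_i$ only constrains the multiplicities along the exceptional lines, and uniqueness of the invariant cubic is a nontrivial point handled in \cite{Diller:2011, McMullen:2007} via the adjoint ideal / invariant two-form, not from a bare degree count. And for item (4), stating that the transverse multiplier $\alpha^{n-3}$ at the smooth fixed point is ``pinned down by the orbit count'' is true in spirit but should be tied explicitly to the determinant constraint coming from $F_n^*\omega=\lambda_n\omega$ near that fixed point, which is how the cited papers extract it.
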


\subsection{Diffeomorphisms on Real Rational surfaces}\label{SS:diffeo} Each polynomial $\chi_n$ has exactly two real roots, $\lambda_n$ and $1/\lambda_n$, outside the unit circle. If we set $\alpha = \lambda_n$ or $1/\lambda_n$, then $f_\alpha$ preserves the real projective plane, and its modification $F_\alpha$ preserves the surface over $\ppr$. Since $f_{1/\lambda_n}$ is conjugate to $f^{-1}_{\lambda_n}$, in this article, we will focus on $\alpha= \lambda_n$ and the corresponding mappings restricted to real surfaces. To simplify the notation, let us use $n$ as the subscript in the cases of $\alpha= \lambda_n$: $X_n \subset X_\alpha$ denotes the surface over $\ppr$, 
\[ f_n = f_{\lambda_n}|_{\ppr}, \quad F_n=  F_{\lambda_n}|_{X_n}, \quad etc.\]

The exact locations of center blowups on the invariant cubic curve $C_n$ and the relative position of exceptional lines $L_t,L_x, L_y$ and $M_t, M_x, M_y$ are essential for the rest of the paper. To have those, let us restate Diller's theorem in our setting. We work with the inverse map so that the cusp fixed point is repelling; to streamline signs we make a harmless reparameterization. All formulas are therefore written for \(f^{-1}\) with this parameter convention. The following modified version is due to the uniqueness up to linear conjugacy in Diller's original theorem.

\begin{thm}\cite{Diller:2011,Diller-Kim}
Let $\lambda_n$, $n\ge 8$ be the largest real root of a polynomial $\chi_n(t) = t^{n} (t^3-t-1) + t^3+t^2-1$. Let $f_n$ be the map defined in \eqref{E:feq} with $\alpha = \lambda_n$. Then there exists an affine change of parameter $\gamma (t)$ for the invariant cubic curve $C_n$ such that 
\begin{enumerate}
\item $\gamma(0)$ is the unique saddle type fixed point and $\gamma(\infty)$ is the repelling fixed point at the cusp.
\item The restriction map $f_n|_{C_n}: \gamma(t) \mapsto \gamma ( (1/\lambda_n) t)$
\item Centers of blowups are given by \[\gamma(a_i) := f_n^i L_y, i=1, \dots, n \ \ \gamma(b_1) : = f_n L_x,\ \ \gamma(c_1):= f_n L_t\] where 
\[ a_i = \lambda_n^{n-j} \frac{ 1+ \lambda_n+ \lambda_n^2}{\lambda_n^{2+n}-1},\ i=1, \dots, n, \quad b_1 = \frac{ 1+ \lambda_n^n+ \lambda_n^{1+n}}{\lambda_n^{2+n}-1}, \quad c_1 = \frac{ 1+ \lambda_n+ \lambda_n^{1+n}}{\lambda_n^{2+n}-1}\]
\end{enumerate}
\end{thm}


\paragraph{Notation.}
By abuse of notation, we use the same symbols \(a_i,b_1,c_1\) both for the marked points on \(C_n\) and for their parameter values.
We also write \(a_0,a_{n+1},b_0,b_2,c_0,c_2\) for the predecessor/successor points along the \(f_n|_{C_n}\)-orbit, so on parameters
\[
a_{j+1}=(1/\lambda_n) a_j\ (0\le j\le n),\qquad b_2=(1/\lambda_n )b_1,\qquad c_2=(1/\lambda_n) c_1,
\]
and hence \(a_0=\lambda_n a_1\), \(b_0=\lambda_n b_1\), \(c_0=\lambda_n c_1\).
Additionally, let us use $C_n$ for invariant cubics for both $f_n$ and $F_n$.

\begin{rem}
Since \(f_n\) lifts to an automorphism of a rational surface, the induced action on the Picard group preserves the intersection form and the canonical class. Together with the fact that \(f_n\) has exactly three exceptional lines in \(\mathbf{P}^2\), this implies that the centers of the blowups are in general position (no three collinear, no six lying on a conic, etc.). In particular, the invariant cubic \(C_n\) is defined over \(\mathbb{R}\); equivalently, its real locus \(C_n(\mathbb{R})=C_n\cap \mathbf{P}^2(\mathbb{R})\) is a real cubic.
\end{rem}

\begin{rem}
Work in homogeneous coordinates \([t:x:y]\) and set \(L_x=\{x=0\}\), \(L_y=\{y=0\}\), \(L_t=\{t=0\}\).
From the formula for \(f_n\) in \eqref{E:feq}, the line \(L_x\) passes through
\(b_1=[0:0:1]\) and \(a_n=[1:0:0]\).
Hence there is a third point \(p\in L_x\cap C_n\).
Since \(p\) is not an indeterminacy point, we have \(f_n(p)=\gamma(b_1)\in C_n\).
As \(C_n\) is invariant and \(f_n|_{C_n}\) acts by the parameter rule (so that \(b_0\) is the predecessor of \(b_1\)),
it follows that \(p=b_0\).
Thus
\[
\{a_n,\,b_1,\,b_0\}\subset L_x.
\]
By the same argument we obtain
\[
\{b_1,\,c_0,\,c_1\}\subset L_t,
\qquad
\{a_0,\,a_n,\,c_1\}\subset L_y.
\]

For the exceptional lines of \(f_n^{-1}\), denoted \(M_x,M_y,M_t\), the corresponding triples are
\[
\{a_1,\,c_1,\,c_2\}\subset M_x,\qquad
\{a_1,\,a_{n+1},\,b_1\}\subset M_t,\qquad
\{b_1,\,b_2,\,c_1\}\subset M_y,
\]
in accordance with the action of \(f_n|_{C_n}\) on the parameter along \(C_n\).
\end{rem}

\begin{rem}\label{R:basept}
Notice that $f_n$ is a diffeomorphism outside the three lines $L_t, L_x, L_y$. Each line $L_t, L_x, L_y$ maps to a intersection point of $M_t, M_x,M_y$.  In $\mathbf{P}^2(\mathbb{R})$, there are four connected components $T_i, i=1,..4$ enclosed by  $L_t, L_x, L_y$. Similarly there are four connected components $S_i, i=1, ..4$ enclosed by $M_t, M_x, M_y$. Since the intersection points of these three lines are points of indeterminacy which is the image of $M_t, M_x,M_y$ under $f_n^{-1}$, Each bounded region $T_i$ is mapped to one of bounded region $S_i$. 

On the invariant real cubic \(C_n(\R)\), the two fixed points \(\gamma(0)\) and \(\gamma(\infty)\) lie in the same component on both sides; relabel so that
\[
\gamma(0),\gamma(\infty)\in T_1 \quad\text{and}\quad f_n(T_1)=S_1 \ni \gamma(0),\gamma(\infty).
\]
Furthermore, since two multipliers at the cusp are $\lambda_n^2, \lambda_n^3$, $f_n$ preserves the orientation near $\gamma(\infty)$. 
It follows that there exist points \(p,q\in (T_1\cap S_1)\setminus C_n(\R)\) with \(f_n(p)=q\), and \(p\) and \(q\) can be joined by a path
\(\gamma_*\subset (T_1\cap S_1)\setminus C_n(\R)\).
\end{rem}

%

\vspace{1ex}
Using the induced action on the homology group, Diller and Kim \cite{Diller-Kim} showed that the topological entropy of $F_n$ is $\log(\lambda_n)$, and therefore, both the real and complex mappings have the same topological entropy. Later, Kim and Klassen \cite{Kim-Klassen} determined the induced action $F_{n*}: \pi_1(X_n,*) \to \pi_1(X_n,*)$ on the fundamental group. Let us summarize the relevant results.

\begin{thm}\cite{Diller-Kim, Kim-Klassen}\label{T:rat}
Real diffeomorphisms $F_n$ satisfy the following properties:
\begin{enumerate}
\item For each $n\ge 8$, the topological entropy of $F_n$ equals $\log \lambda_n$. 
\item The spectral radius of the induced action $F_{n*}$ on the homology group equals to $\lambda_n>1$.
\item $F_n$ has exactly two fixed points: one at the cusp of $C_n$, which is a repeller, and one at a non-singular point on $C_n$, which is a saddle type. 
\item The invariant cubic $C_n$ can be parametrized such that $C_n = \{ \gamma(t) : t \in \mathbb{R} \cup \{\infty\} \}$ where \[ \gamma(\infty) = \text{the cusp},\ \ \quad\gamma(0) = \text{the non-singular fixed point}, \] \[  \gamma(a_i) = f_n^i L_y,  \ \ i = 1, \dots, n, \qquad \gamma(b_1) = f_n L_x, \quad\text{and} \quad \gamma(c_1) = f_n L_t.\]
In this setting, all centers of blowups are ordered in the following way:
\[ \infty > a_1 > a_2 > b_1 >a_3 > a_4 > c_1 > a_5 > a_6 > \cdots >  a_{n-1} > a_n > 0.\]
\item There is a real meromorphic $2$ form $\omega$ on $X_n$ such that: 
\begin{itemize}
\item $\omega$ has a simple pole along the invariant cubic $C_n$ and no other poles or zeros,
\item $F_n^* \omega = \lambda_n \omega$ with $\lambda_n>1$.
\end{itemize} 
\end{enumerate}
\end{thm}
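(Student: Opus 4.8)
.

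The statement is essentially a digest of results from \cite{Diller-Kim} and \cite{Kim-Klassen}, so the plan is to reduce each item to those computations together with the structural facts collected in Theorem~\ref{T:rr}. Throughout I fix $\alpha=\lambda_n>1$ (with $n\ge 8$) and use that $f_n=f_\alpha|_{\ppr}$ has the unique invariant cuspidal cubic $C_n$, exactly two fixed points with the multipliers of Theorem~\ref{T:rr}(4), the conjugacy $f_n|_{C_n}\sim(t\mapsto\alpha t)$ of Theorem~\ref{T:rr}(5), and the description of $X_n$ as the blow-up of $\ppr$ at the $n+2$ points $P_\alpha=\{f_nL_t,\,f_nL_x,\,f_n^jL_y\ (1\le j\le n)\}\subset C_{n,\mathrm{reg}}$ of Theorem~\ref{T:rr}(7).

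For parts (1) and (2) I will first pin down the induced action $F_{n*}$ on $H_1(X_n;\mathbb{R})$. Since every center of blow-up lies on $C_n$ and the orbit data of $f_\alpha$ is $(1,1,n)$ --- $f_nL_t$ and $f_nL_x$ reach the indeterminacy locus after one further application of $f_n$, while $f_nL_y$ does so only after $n$ --- this action is governed by the same blow-up combinatorics as $F_\alpha^*$ on $\mathrm{Pic}(X_\alpha)$, and its spectral radius is therefore the Salem number $\lambda_n>1$; this is (2). For (1) I will bound $\htop(F_n)$ from both sides. From above, $X_n$ is a compact $F_\alpha$-invariant subset of the complex surface $X_\alpha$, so monotonicity of topological entropy under restriction to closed invariant sets together with Theorem~\ref{T:rr}(6) gives $\htop(F_n)\le\htop(F_\alpha)=\log\lambda_n$. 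From below, Manning's inequality gives $\htop(F_n)\ge\log\rho$, where $\rho$ is the spectral radius of $F_{n*}$ on $H_1(X_n;\mathbb{R})$, and $\rho=\lambda_n$ by (2). The two bounds coincide.

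For part (3) I will start from Theorem~\ref{T:rr}(4): $f_n$ has exactly two fixed points in $\ppr$, the cusp $p_0$ of $C_n$ with multipliers $\alpha^2,\alpha^3$, and a smooth point $p_\infty\in C_{n,\mathrm{reg}}$ with multipliers $\alpha^{-1},\alpha^{n-3}$. Neither is a center of blow-up --- the centers lie in nontrivial $f_n$-orbits of the exceptional lines --- so each lifts to a single fixed point of $F_n$ with unchanged derivative; moreover $F_n$ carries each exceptional divisor and each strict transform of $L_t,L_x,L_y$ to a different member of that same collection of curves, so it has no fixed point on the exceptional locus. Hence $\mathrm{Fix}(F_n)$ consists exactly of those two lifts. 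With $\alpha=\lambda_n>1$, at $p_0$ both eigenvalues $\alpha^2,\alpha^3$ exceed $1$, so $p_0$ is a repeller; at $p_\infty$ one has $0<\alpha^{-1}<1<\alpha^{n-3}$ (as $n-3\ge 5$), so $p_\infty$ is a hyperbolic saddle.

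For parts (4) and (5) I will work directly with the geometry of $C_n$. Its normalization is $\mathbf{P}^1$, the cusp is its unique unibranch point, and $C_{n,\mathrm{reg}}\cong\mathbb{G}_a$, so $C_n(\mathbb{R})=C_{n,\mathrm{reg}}(\mathbb{R})\cup\{p_0\}$ is a circle; I will choose the parametrization $\gamma\colon\mathbb{R}\cup\{\infty\}\to C_n(\mathbb{R})$ with $\gamma(0)=p_0$, $\gamma(\infty)=p_\infty$ (the two fixed points of $t\mapsto\alpha t$) and $f_n(\gamma(t))=\gamma(\alpha t)$, which is possible by Theorem~\ref{T:rr}(5). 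Then $\gamma(a_i)=f_n^iL_y$ forces $a_{i+1}=\alpha a_i$, hence $a_i=\alpha^{i-1}a_1$; after fixing the remaining sign of $\gamma$ so that $a_1>0$, the chain $0<a_1<\cdots<a_n<\infty$ follows from $\alpha>1$, and substituting the explicit points $f_nL_x=[0:0:1]$ and $f_nL_t=[0:1:0]$ into the rational parametrization of $C_n$ coming from \eqref{E:feq} should place $b_1$ strictly between $a_2$ and $a_3$ and $c_1$ strictly between $a_4$ and $a_5$, which gives the asserted order. For (5): since the $n+2$ centers lie on the smooth locus of the cubic $C_n\in|-K_{\ppr}|$, the strict transform of $C_n$ represents $-K_{X_n}$; as $(-K_{X_n})^2=7-n<0$ and this class is carried by an irreducible curve, $h^0(X_n,-K_{X_n})=1$, so there is a meromorphic $2$-form $\omega$, unique up to a nonzero real scalar, with $\mathrm{div}(\omega)=-C_n$ (a simple pole along $C_n$ and no other zeros or poles). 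Since $F_n$ is real and preserves $C_n$, $F_n^*\omega$ has the same divisor, hence $F_n^*\omega=\delta\omega$ with $\delta\in\mathbb{R}^*$, and I will evaluate $\delta$ via the Poincar\'e residue $\mathrm{Res}_{C_n}\omega\in H^0(C_n,\omega_{C_n})$: transporting it to the normalization, where $f_n|_{C_n}$ acts by $t\mapsto\alpha t$, identifies $\delta$ with $\lambda_n$, in particular $\delta>1$. The genuinely computational points --- the explicit matrix of $F_{n*}$ on $H_1(X_n)$ in (1)--(2), and the interlacing of $b_1$ and $c_1$ among the geometric progression $a_i=\alpha^{i-1}a_1$ in (4) --- are where the real work sits; both are carried out in \cite{Diller-Kim} (with the refinement to $\pi_1$ in \cite{Kim-Klassen}), and everything else above is formal once Theorem~\ref{T:rr} is in hand.
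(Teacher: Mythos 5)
The paper does not actually prove Theorem~\ref{T:rat}: it is imported from \cite{Diller-Kim} and \cite{Kim-Klassen} without argument, so the only meaningful comparison is with those references, and your outline follows essentially their strategy (complex entropy of $F_\alpha$ as the upper bound, Manning's inequality for the $H_1$-action as the lower bound, the multiplier data of Theorem~\ref{T:rr}(4) for the fixed-point types, the geometric progression $a_i=\alpha^{i-1}a_1$ on the cubic, and the anticanonical $2$-form with $\mathrm{div}(\omega)=-C_n$, unique because $(-K_{X_n})^2=7-n<0$). Deferring the two genuinely computational items --- the explicit matrix of $F_{n*}$ on $H_1(X_n(\mathbb{R}))$ and the interlacing of $b_1,c_1$ among the $a_i$ --- to the cited papers is reasonable, since that is exactly where the content of the theorem lies.

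Two steps deserve correction or more care. First, in part (5) your residue argument does not give what you claim under your own normalization: with the cusp at $t=0$ and $f_n(\gamma(t))=\gamma(\alpha t)$, the residue of $\omega$ along $C_n$ is the translation-invariant form in the group coordinate $s=1/t$ on the smooth locus, where $f_n|_{C_n}$ acts by $s\mapsto s/\alpha$, so the residue is scaled by $\alpha^{-1}$, not $\alpha$. The same comes out of the local computation at the cusp: if the multipliers there are $\alpha^2,\alpha^3$ as in Theorem~\ref{T:rr}(4), then for $\omega=dx\wedge dy/g$ with $g=y^2-x^3$ one finds $F_n^*\omega=\alpha^{2+3-6}\omega=\lambda_n^{-1}\omega$. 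Hence the eigenvalue $\lambda_n>1$ belongs to the pushforward $F_{n*}\omega$ (the convention used in the paper's introduction), while the pullback statement $F_n^*\omega=\lambda_n\omega$ printed in Theorem~\ref{T:rat}(5) is the inverse convention; as a pullback claim, ``$\delta=\lambda_n$'' is false, so you should either switch to $F_{n*}$ or record $\delta=\lambda_n^{-1}$ and note the paper's internal inconsistency. Second, in part (3) the observation that $F_n$ permutes the exceptional curves and the strict transforms of $L_t,L_x,L_y,M_t,M_x,M_y$ without fixing any member does not by itself rule out a fixed point lying at an intersection of such a curve with its image; one still has to check those finitely many candidate points (or verify the count by a Lefschetz-type argument), which is what the cited papers do.
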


\subsection{Diffeomorphisms on Orientable Surfaces}\label{SS:cut}
Each surface $X_n$ is obtained by blowing up along a finite set of points on $\ppr$. This can be visualized as gluing the boundary of M\"{o}bius band to each circular boundary around the centers of blowups. A M\"{o}bius band glued to a surface is called a \textit{cross-cap}. it is useful to think about the cross-caps not
only as the boundary of a M\"{o}bius band glued to the inside dotted circle in Figure \ref{F:horizontal}, but also by quotienting the dotted circle by the antipodal map. The most common notation for a cross-cap is $\bigotimes$. In this article, we use a dotted closed curve for a cross-cap for easier labeling. 
Each surface $X_n$ is homeomorphic to a connected sum of $n+3$ copies of $\ppr$, a compact closed non-orientable surface $\mathcal{N}_{n+3}$ of genus $n+3$, and its Euler characteristic is $-1-n$:
\[ X_n \cong \mathcal{N}_{n+3}, \qquad  \chi(X_n) = -1-n.\]
Since all the centers of blowups lie on the invariant cubic $C_n \subset \ppr$, let us illustrate $X_n$ as in Figure \ref{F:horizontal}. The horizontal line in the middle represents the invariant cubic $C_n$. And each center of blowup is labeled in Figure \ref{F:horizontal} as follows: 
\[ a_i \ =\ f_n^i L_y, \ \ i = 1, \dots, n, \quad b_1 = f_n L_x, \quad c_1= f_n L_t. \]
Each dotted curve corresponds to the exceptional curve over the center of blowup and thus the boundary of M\"{o}bius band. Since the construction starts on $\ppr$, the outer oval curve is also dotted. Each closed dotted curve corresponds to one cross-cap, and we see that the invariant cubic $C_n$ crosses $n+3$ cross-caps exactly once. 

\begin{figure}
\includegraphics[width=4.5in]{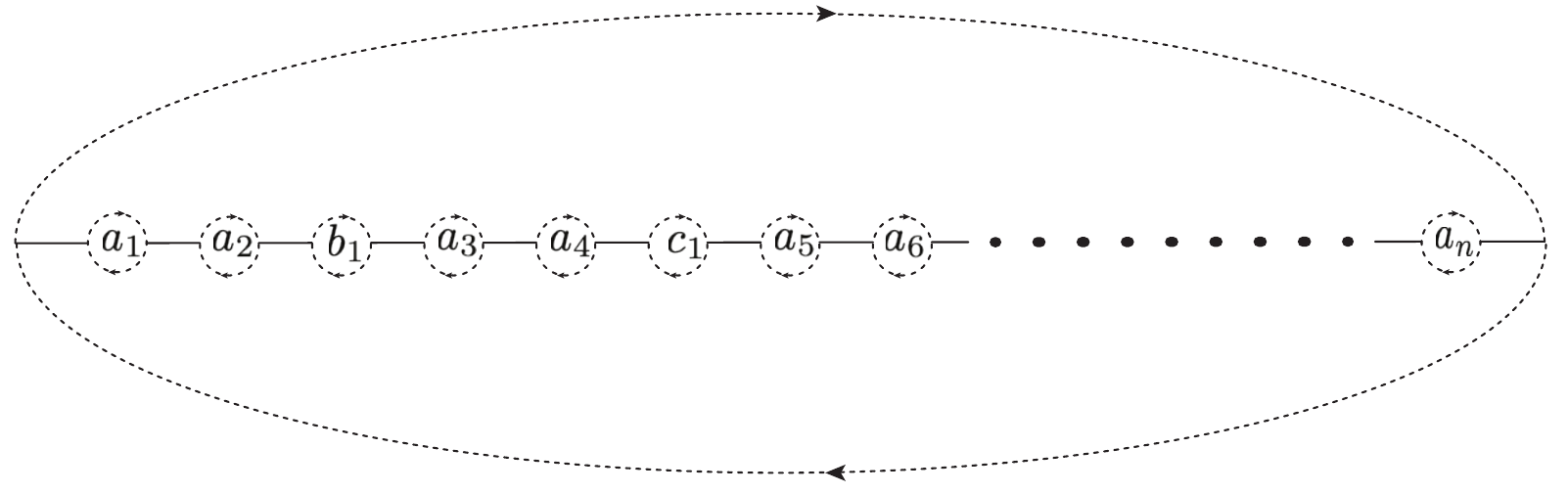}
\caption{$X_n$ with $C_n$ as a horizontal line}
\label{F:horizontal}
\end{figure}

\begin{lem} If $n$ is even, then the invariant cubic $C_n$ is a non-separating $1$-sided curve. If $n$ is odd, the invariant cubic $C_n$ is a non-separating $2$-sided curve. 
\end{lem}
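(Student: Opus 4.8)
The plan is to track the mod-2 homology class of $C_n$ and its normal bundle on the non-orientable surface $X_n \cong \mathcal{N}_{n+3}$, exploiting the explicit picture in Figure \ref{F:horizontal}. The key structural fact is that $C_n$ is the invariant cubic drawn as the horizontal line, and it meets each of the $n+3$ cross-caps (the $n+2$ exceptional curves plus the outer oval of $\ppr$) exactly once; I would use this to compute both whether $C_n$ separates and whether it is $1$-sided or $2$-sided. First I would recall that on $\mathcal{N}_{k}$ a simple closed curve is $2$-sided if and only if its self-intersection number mod $2$ vanishes, equivalently if and only if its $\mathbb{Z}/2$-homology class lies in the kernel of the mod-$2$ intersection form's associated quadratic (the curve is orientation-preserving along its normal bundle). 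A model computation: passing once through a single cross-cap reverses orientation of the normal direction, so a curve that crosses an \emph{odd} number of cross-caps once each is $1$-sided, and one crossing an \emph{even} number is $2$-sided. Since $C_n$ crosses exactly $n+3$ cross-caps, it is $1$-sided when $n+3$ is odd, i.e. $n$ even, and $2$-sided when $n+3$ is even, i.e. $n$ odd — which is exactly the claimed dichotomy.

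Concretely I would argue as follows. Realize $X_n$ as a disk with $n+3$ cross-caps attached along disjoint sub-disks arranged in a row, with $C_n$ the horizontal arc running through all of them (closing up through the outer cross-cap). Take a small push-off $C_n'$ of $C_n$: along each segment between consecutive cross-caps the push-off stays on a fixed side, but each time $C_n$ runs through a cross-cap (a Möbius band) the two local sides of $C_n$ get interchanged. Therefore after going all the way around, the global side has been switched $n+3$ times; the number of ends of the normal line bundle that get identified is governed by the parity of $n+3$. If $n+3$ is odd the push-off $C_n'$ connects back to itself having switched sides an odd number of times, so the normal bundle of $C_n$ is a Möbius band and $C_n$ is $1$-sided; if $n+3$ is even, $C_n'$ is a genuine parallel copy and $C_n$ is $2$-sided. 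For non-separation I would observe that a regular neighborhood of $C_n$ together with $C_n$ either is a Möbius band (when $1$-sided — automatically non-separating, since removing a $1$-sided curve never disconnects a surface) or, when $2$-sided, that the complement $X_n \setminus C_n = \hat X_n$ is connected: this is forced because $X_n \setminus C_n$ is obtained by cutting, and one checks from Figure \ref{F:horizontal} (or from $\chi(X_n) = -1-n$ together with the fact that cutting along a $2$-sided non-separating curve raises Euler characteristic by $0$ and keeps the surface connected) that $\hat X_n$ is the single orientable surface described in the Main Theorem rather than a disjoint union. Equivalently, $[C_n]$ is nonzero in $H_1(X_n;\mathbb{Z}/2)$: in the standard basis associated to the $n+3$ cross-caps, $C_n$ is homologous to the sum of all $n+3$ core curves (each crossed once), and since $n+3 \ge 11 \ge 1$ this class is visibly nonzero, so $C_n$ does not separate.

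The main obstacle, and the only step requiring genuine care rather than a picture, is making the normal-bundle/side-switching count rigorous — i.e. justifying cleanly that ``crossing a cross-cap once'' contributes exactly one sign flip to the normal bundle and that these contributions multiply. I would handle this by working with the $\mathbb{Z}/2$ intersection pairing: the obstruction to $2$-sidedness of a simple closed curve $\gamma$ on any surface is precisely its mod-$2$ self-intersection $[\gamma]\cdot[\gamma] \in \mathbb{Z}/2$, and for the standard generators $x_1,\dots,x_{n+3}$ of $H_1(\mathcal{N}_{n+3};\mathbb{Z}/2)$ coming from the cross-cap cores one has $x_i \cdot x_j = \delta_{ij}$. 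Writing $[C_n] = x_1 + \cdots + x_{n+3}$ gives $[C_n]\cdot[C_n] = n+3 \pmod 2$, which is $1$ exactly when $n$ is even and $0$ exactly when $n$ is odd. This recovers the $1$-sided/$2$-sided dichotomy with no hand-waving, and combined with the nonvanishing of $[C_n]$ (which gives non-separation in both parities) completes the proof. I expect the Euler-characteristic bookkeeping for the ``$2$-sided, $n$ odd'' case to be the fiddliest remaining bit, but it is routine given Theorem \ref{T:rat} and the explicit gluing in Figure \ref{F:horizontal}.
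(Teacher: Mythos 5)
Your argument is essentially the paper's (which observes non-separation from Figure \ref{F:horizontal} and deduces $1$-sidedness vs.\ $2$-sidedness from the parity of the $n+3$ cross-cap crossings); you have simply made the parity count rigorous via the mod-$2$ self-intersection form $[C_n]\cdot[C_n] = n+3 \pmod 2$ and verified $[C_n]\neq 0$ in $H_1(X_n;\mathbb{Z}/2)$ for non-separation. This is a correct and standard formalization of the same idea, not a different route.
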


\begin{proof}
It is clear that $C_n$ is non-separating from Figure \ref{F:horizontal}. If $n$ is even, $C_n$ passes through odd number $n+3$ cross-caps and thus $C_n$ is $1$-sided. Similarly, if $n$ is odd, $C_n$ is $2$-sided. 
\end{proof}

The cut surface of $X_n$ along the invariant cubic $C_n$ has either one boundary component if $n$ is even or two boundary components if $n$ is odd. Let us recall the result of Nicholls, Scherich, and Shneidman \cite{Nicholls-Scherich-Shneidman:2023}, which shows that the cut surface along $c$ is orientable by showing that there is no $1$-sided curve disjoint from $c$. 

\begin{prop}\cite[Proposition~3.5]{Nicholls-Scherich-Shneidman:2023}\label{P:cut}
Let $X$ be a genus $g$ non-orientable surface, and let $c$ be a curve in $X$ that passes through every cross-cap exactly once. The result of cutting along $c$ yields an orientable surface. 
\end{prop}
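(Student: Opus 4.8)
The plan is to prove orientability of the cut surface $\hat X := X$ cut along $c$ by showing that it contains no one-sided simple closed curve, and to reduce this to a single homological fact: once $c$ meets every cross-cap exactly once, its class $[c]\in H_1(X;\mathbb{Z}/2)$ is precisely the class that detects one-sidedness. I would lean on two standard facts. A compact surface (with or without boundary) is orientable if and only if it contains no one-sided simple closed curve, since a one-sided curve has a M\"obius-band neighborhood and, conversely, a nonorientable surface contains an embedded M\"obius band. And a simple closed curve $\gamma$ is one-sided if and only if its normal bundle is the M\"obius bundle, if and only if its mod-$2$ self-intersection $[\gamma]\cdot[\gamma]$ equals $1$; equivalently $\langle w_1(X),[\gamma]\rangle = 1$, because $w_1(TX)|_\gamma = w_1(N\gamma)$.

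First I would set up the mod-$2$ intersection theory on the closed surface $X \cong \mathcal N_g$. The classes $x_1,\dots,x_g \in H_1(X;\mathbb{Z}/2)$ carried by the $g$ cross-cap cores form a basis; each $x_i$ is one-sided, so $x_i\cdot x_i = 1$, and $x_i\cdot x_j = 0$ for $i\neq j$, so the intersection form is the identity, in particular nondegenerate. Since $z\mapsto z\cdot z$ is $\mathbb{Z}/2$-linear, Poincar\'e duality on the closed surface $X$ gives a unique class $[w]$ with $[w]\cdot z = z\cdot z$ for all $z$, namely $[w] = \sum_i x_i$ (this $[w]$ is the Poincar\'e dual of $w_1(X)$), and a simple closed curve $\gamma$ is one-sided precisely when $[\gamma]\cdot[w] = 1$. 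The hypothesis that $c$ passes through every cross-cap exactly once is, suitably interpreted, the statement that $c$ meets the core of each cross-cap transversally in a single point, i.e.\ $[c]\cdot x_i = 1$ for every $i$; since the form is the identity this forces $[c] = \sum_i x_i = [w]$. This identification is the only place the hypothesis is used.

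Then I would finish by contradiction on the cut surface. Let $q\colon \hat X\to X$ be the gluing map that reglues $c$; it restricts to a homeomorphism from $\mathrm{int}\,\hat X$ onto $X\setminus c$. If $\hat X$ were nonorientable it would contain a one-sided simple closed curve, which, not being boundary-parallel, may be isotoped into $\mathrm{int}\,\hat X$; its image $\gamma\subset X$ is then an embedded simple closed curve disjoint from $c$, and since $q$ is a local homeomorphism along it, $\gamma$ is still one-sided in $X$, so $[\gamma]\cdot[\gamma] = 1$. On the other hand $[\gamma]\cdot[\gamma] = [\gamma]\cdot[w] = [\gamma]\cdot[c] = 0$, the last equality because $\gamma$ and $c$ have disjoint representatives. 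This contradiction shows $\hat X$ has no one-sided simple closed curve, hence is orientable. The argument is essentially formal once the framework is in place; the only genuinely delicate points are bookkeeping — checking that cutting preserves both the sidedness of, and the vanishing algebraic intersection with $c$ of, curves lying off $c$ — together with pinning down the precise meaning of ``through every cross-cap exactly once'' so that it yields $[c]\cdot x_i = 1$. Everything else is the standard dictionary between $w_1$, self-intersection, and the nondegenerate mod-$2$ intersection form on a closed surface.
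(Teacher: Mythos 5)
Your proof is correct and follows the same strategy the paper attributes to the cited reference (Nicholls--Scherich--Shneidman), namely showing there is no one-sided simple closed curve disjoint from $c$; the paper itself only cites the result and does not reproduce a proof, so your detailed account via the mod-$2$ intersection form and the Poincar\'e dual of $w_1$ is a legitimate fleshing-out of that one-line description. The one place worth making fully explicit is the translation of ``passes through every cross-cap exactly once'' into $[c]\cdot x_i=1$ for each cross-cap core $x_i$; once that is pinned down, the rest is, as you say, standard bookkeeping with the nondegenerate $\mathbb{Z}/2$ intersection form.
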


Let $\hat X_n = X_n \setminus \{ C_n\}$ be the complement of $C_n$. 
Since the invariant cubic $C_n$ passes through every cross-cap exactly once, the resulting surface $\hat X_n$ is homeomoprhic to a once punctured genus $(n+2)/2$ orientable surface, $M_{(n+2)/2,1}$, if $n$ is even, and $\hat X_n$ is homeomorphic to a twice punctured genus $(n+1)/2$ orientable surface, $M_{(n+1)/2,2}$, if $n$ is odd.
\begin{equation} \hat X_n \cong  M_{(n+2)/2,1} \ \ \text{if } n \text{ is even,} \qquad \text{and}  \ \ \hat X_n \cong  M_{(n+1)/2,2} \ \ \text{if } n \text{ is odd.}\end{equation}
Since the diffeomorphism $F_n$ preserves $C_n$, we have a homeomorphism $\hat F_n$ on $\hat X_n$ for $n\ge 8$.

\subsection{Reading curves} Kim and Klassen \cite{Kim-Klassen} showed that the induced action $F_n*$ on the fundamental group $\pi_1(X_n)$ is completely determined by the orbit of exceptional lines. In their article, the induced action was determined using a set of \textit{reading curves}, which are dual to the set of generators. Let us illustrate the idea of reading curves with a torus $\mathbb{T}$. 

\begin{figure}[h]
\includegraphics[width=3in]{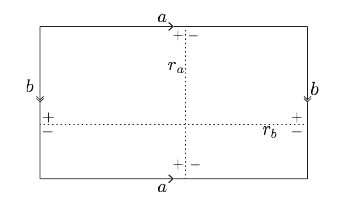}
\caption{The reading curves $r_a, r_b$ for a torus $\mathbb{T}$ with $\pi_1(\mathbb{T}) = \langle a, b| a b a^{-1} b^{-1} \rangle$}
\label{F:torus}
\end{figure}

\begin{figure}[h]
\includegraphics[width=2.86in]{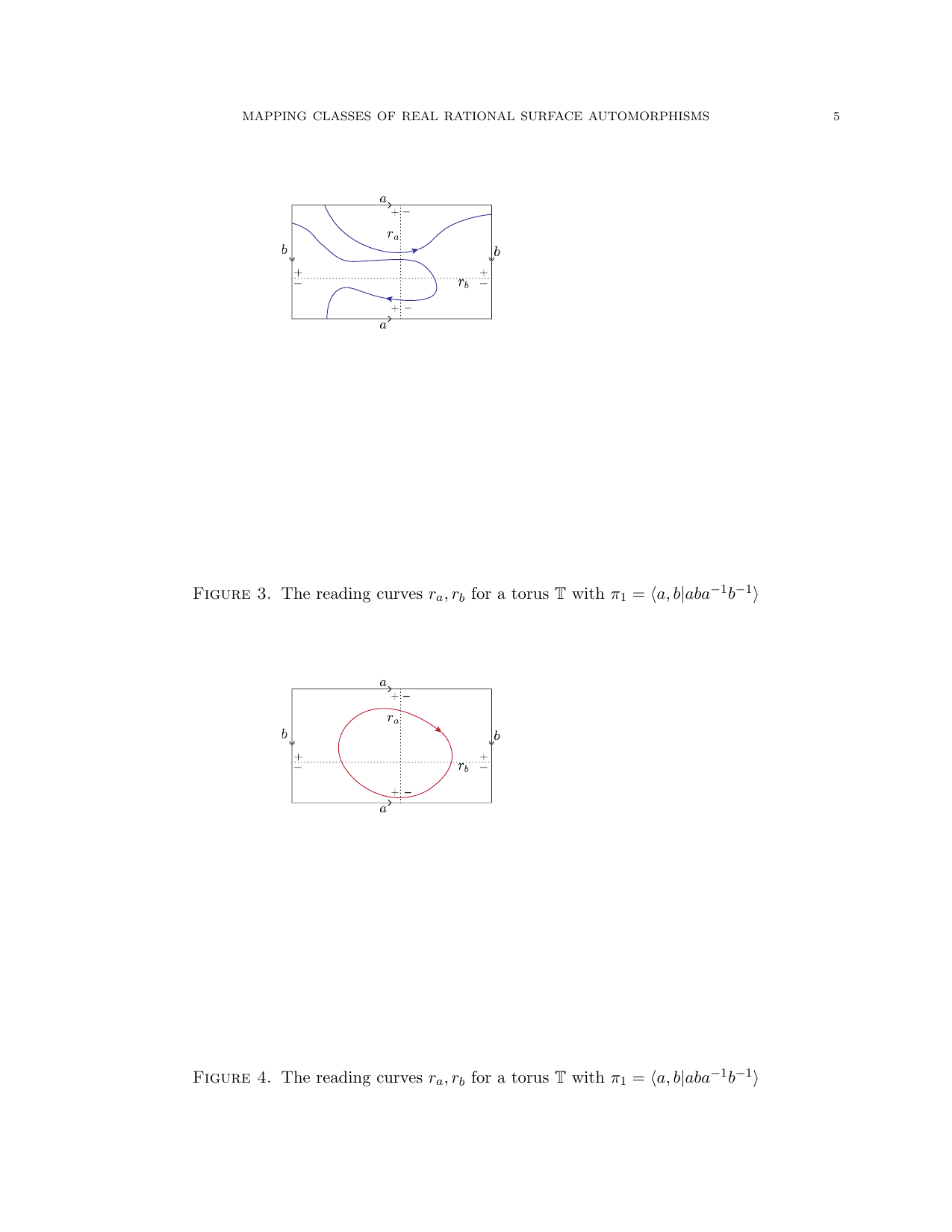}\ \ \includegraphics[width=3in]{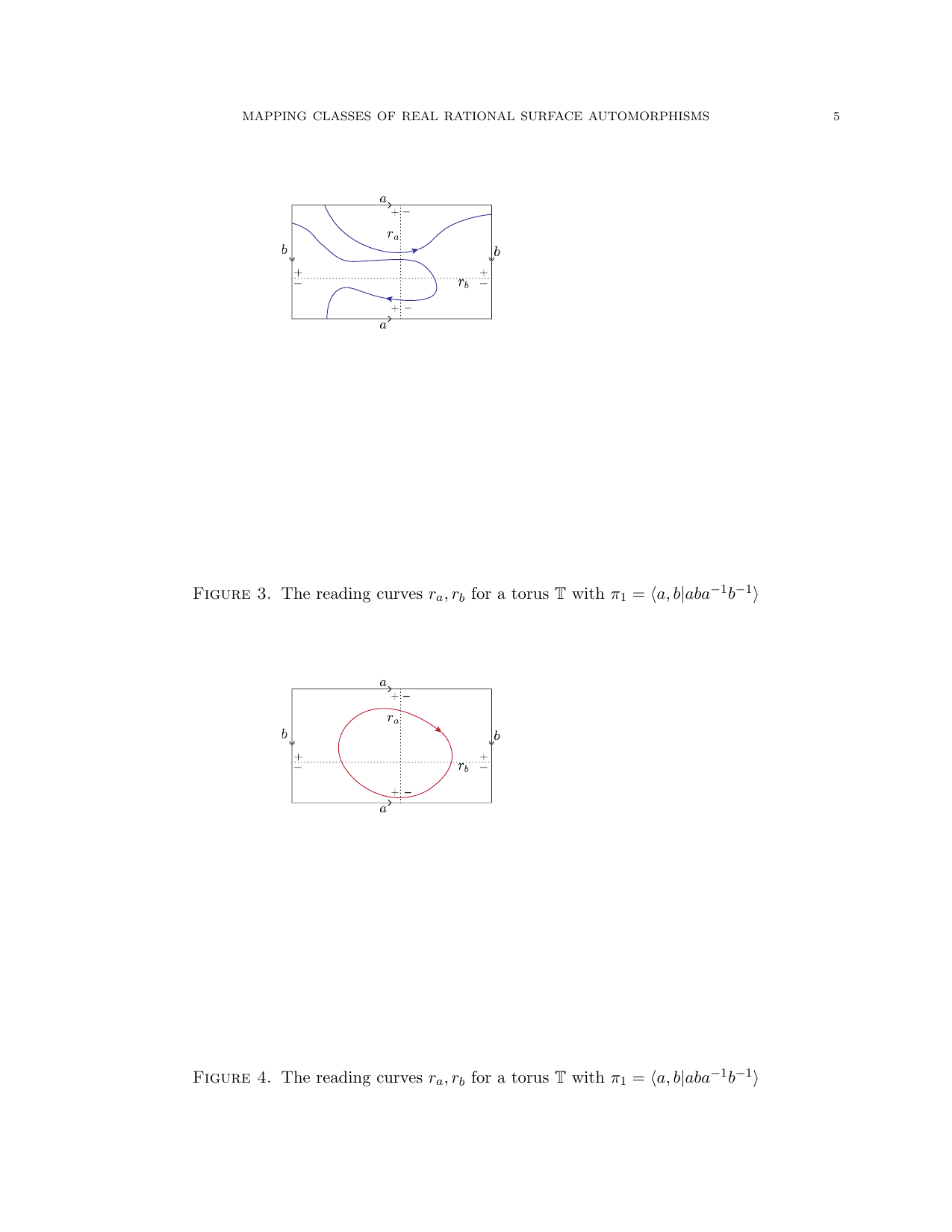}
\caption{The class of the red curve on the lhs  is given by $ab a^{-1}b^{-1} = 1$, and the class of the blue curve on the rhs is given by $a^2b a^{-1}= ab$}
\label{F:toruswC}
\end{figure}

The fundamental group of a torus has two generators $a$ and $b$ with a relation $a b a^{-1} b^{-1} = 1$. In the fundamental polygon, we can put the reading curve as shown in Figure \ref{F:torus}. Two reading curves $r_a, r_b$ are dotted lines inside the square. The $\pm$ signs indicate the positive or negative power of the corresponding generator. If a given curve $C$ crosses reading curve $r_a$ from the positive side to the negative side, then the $\pi_1$ class of $C$ will get $a$, and if it crosses $r_b$ from the negative side to the positive side, then $[C]$ will pick up $b^{-1}$. As long as a curve does not cross the unique intersection of all reading curves, one can easily determine the $\pi_1$ class of any given curve. Notice that one can always choose a set of reading curves such that the curve of interest does not pass through the intersection of reading curves.

We provide two simple examples in Figure \ref{F:toruswC}. The class of the left-hand side red curve is given by $a b a^{-1} b^{-1} = 1$, and the class of the right-hand side curve is given by $a^2b a^{-1}= ab$. See \cite{Coh, Kim-Klassen} for more details regarding reading curves. 

%
%

\medskip

A more illustrative example comes from a non-orientable surface. Let
\[X=\rptwo\#\rptwo\#\rptwo\#\rptwo\] and let \(c\) be a simple closed curve that meets each crosscap exactly once. Cutting along \(c\) yields the surface \(\hat X=X\setminus c\), and in this model
\(\pi_1(\hat X)\cong\Z*\Z*\Z\).
Figure~\ref{F:cutX} depicts \(\hat X\) as a hexagon with opposite sides identified (the dotted arcs near the corners indicate the cut \(c\); the opposite side identifications correspond to a M\"{o}bius-band pairing).

For \(\pi_1(\hat X,*)\) it is convenient to take generators to be the three loops that pass through exactly one crosscap; call them \(a,b,c\). We place the basepoint at their common intersection (see Figure~\ref{F:cutX}). The \emph{reading curves} are chosen dual to these generators: we use the sides of the hexagon as reading arcs. Fix the following sign convention. We orient the generator labeled \(a\) ``upward"; then an oriented curve that \emph{enters} the top side contributes \(a\), while entering the bottom side contributes \(a^{-1}\). The generators labeled \(b\) and \(c\) are treated similarly; the \(\pm\) signs printed next to those sides indicate whether the contribution is \(b^{\pm1}\) or \(c^{\pm1}\) according to the entering direction.

In Figure~\ref{F:cutXreading} we show two sample readings: the red curve on the left has class \(ab\), and the curve on the right has class \(c^{-1}a^{-1}\).

\begin{figure}
\includegraphics[width=2in]{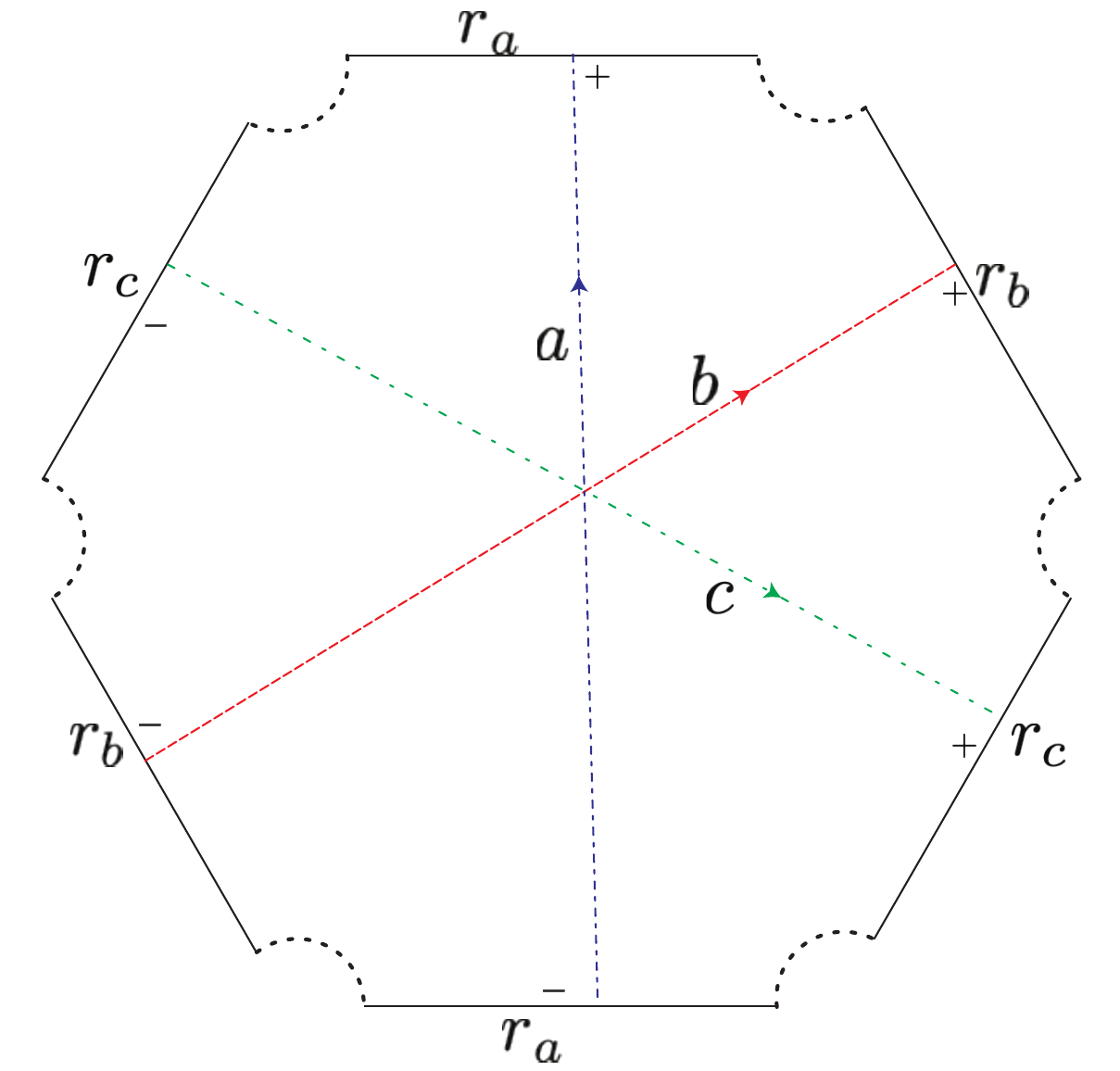}
\caption{The reading curves $r_a, r_b, r_c$ for a cut surface $\hat X$ with $\pi_1(\hat X)  = \langle a,b,c \rangle$}
\label{F:cutX}
\end{figure}

\begin{figure}
\includegraphics[width=2in]{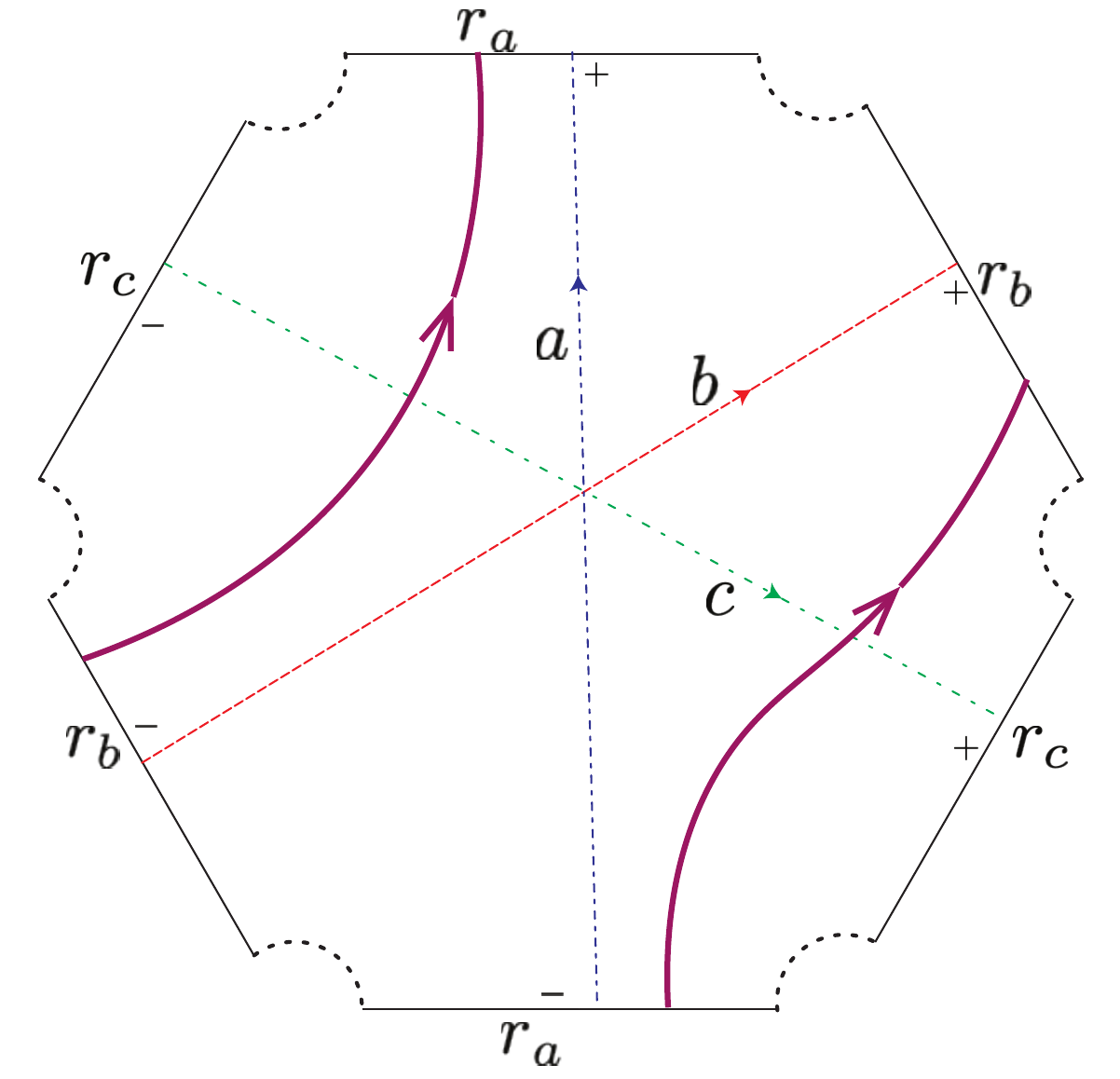}\ \quad \quad \includegraphics[width=1.93in]{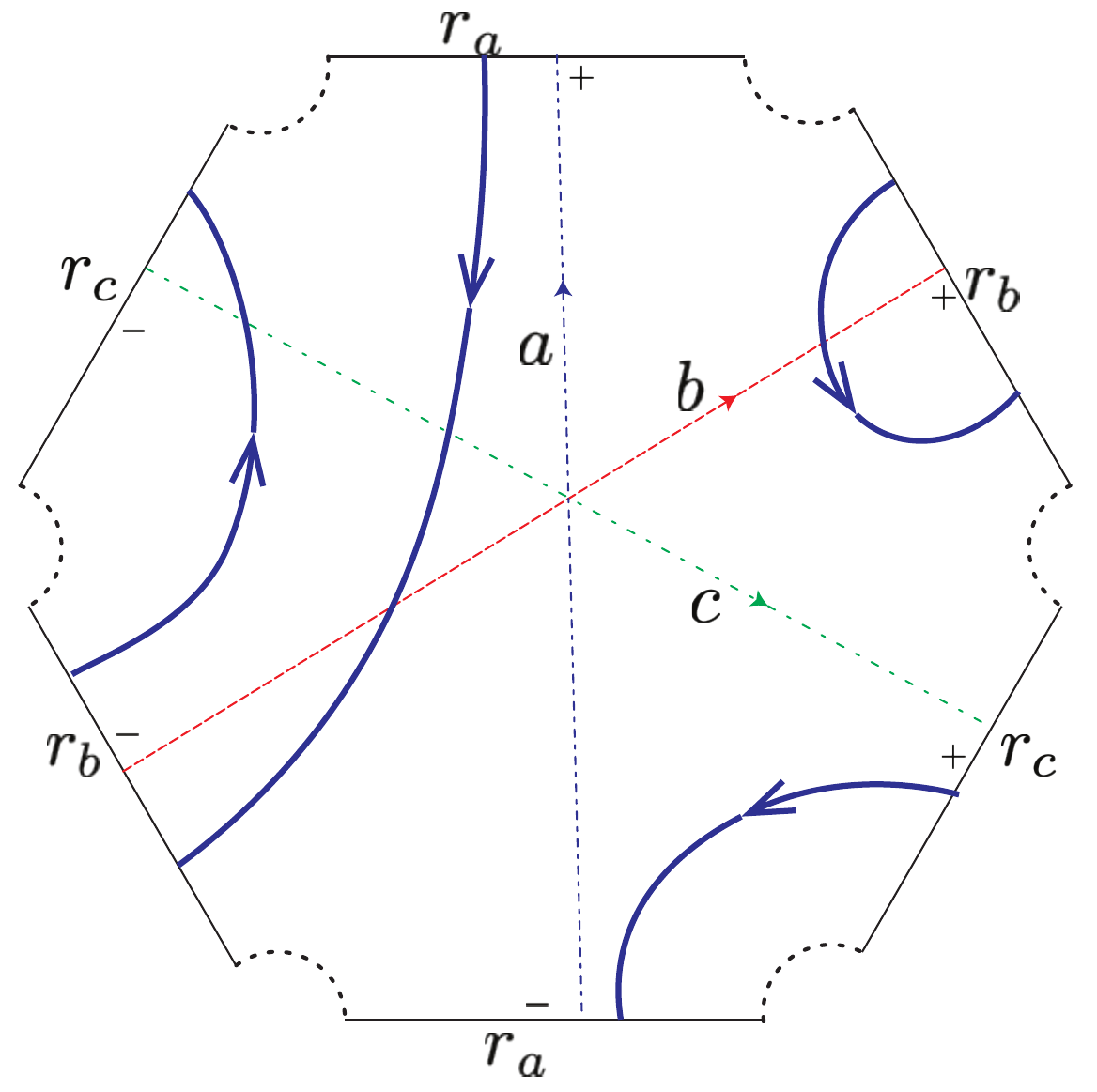}
\caption{The class of the red curve on the lhs  is given by $ab$, and the class of the blue curve on the rhs is given by $c^{-1} a^{-1} b^{-1} b= c^{-1}a^{-1}$}
\label{F:cutXreading}
\end{figure}

\subsection{The induced action on $\pi_1(\hat X_n)$}
In this article, the surface of interest is the cut-surface $\hat X_n$. By Proposition \ref{P:cut}, we see that $\hat X_n$ is homeomorphic to a $g=\lfloor (n+2)/2 \rfloor$ hole torus with one (if $n$ is even) or two (if $n$ is odd) punctures. Thus, the fundamental group $\pi_1(\hat X_n)$ is isomorphic to a free group $\mathbb{Z}*\mathbb{Z}* \cdots *\mathbb{Z}$, repeated $n+2$-times. 
\[ \pi_1(\hat X_n) = \mathbb{Z}*\mathbb{Z}* \cdots *\mathbb{Z}, \ \ n+2 \text{ times}.\]
We illustrate our choice of generators and reading curves in Figure \ref{F:gen}. In this figure, the middle horizontal line passing through all cross-caps will be removed, and the outside circle will be identified with the antipodal points. For our purpose, 
it is convenient to use the vertical curves passing through each exceptional curve as the set of generators and the middle horizontal curves inside each exceptional curve as reading curves, as shown in Figure \ref{F:gen}.  In other words, if a curve passes through each labeled cross-cap from top to bottom, it will get the positive power of the corresponding generator, and if a curve passes through the cross-cap from bottom to top, the corresponding generator has the negative power.  
Since $\hat F_n$ does not have a fixed point, we need to conjugate a path between the base point and its image to determine the image of any oriented curve under $\hat F_n$. Since the relative locations of exceptional lines $L_t, L_x, L_y$, $M_t,M_x,M_y$ and the removed invariant cubic are importnat to us, we will use the point $p$ discussed in Remark \ref{R:basept} as the base point and use the path $\gamma_*$ to move $q:=\hat F_n (p)$ back to the base point. Since $\gamma_*$ is disjoint from all exceptional curves, we can determine the image of simple closed curve using reading curves as in \cite{Kim-Klassen}.

\begin{figure}
\includegraphics[width=3in]{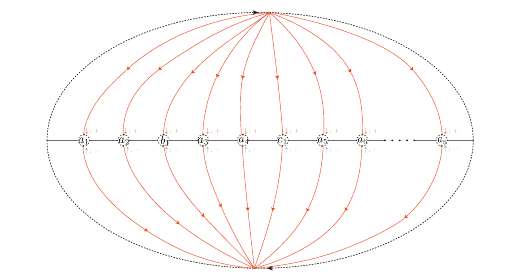}
\caption{The set of generators and the reading curves on $\hat X_n$}
\label{F:gen}
\end{figure}

We will determine the induced action on the fundamental group by calculating the class of the image of each generator under $\hat F_n$. Since $F_n$ preserves the invariant cubic, to determine the image of a curve $ \subset \hat X_n$ under $\hat F_n$, we may use the image under $F_n$ on $X_n$. It is worth pointing out that the mapping $F_n$ is determined by the three exceptional lines and the restriction map on the invariant cubic. (See \cite{McMullen:2007, Diller:2011}.) This makes the computation of the induced action of $\hat F_n$ straightforward.

\begin{figure}
\includegraphics[width=6in]{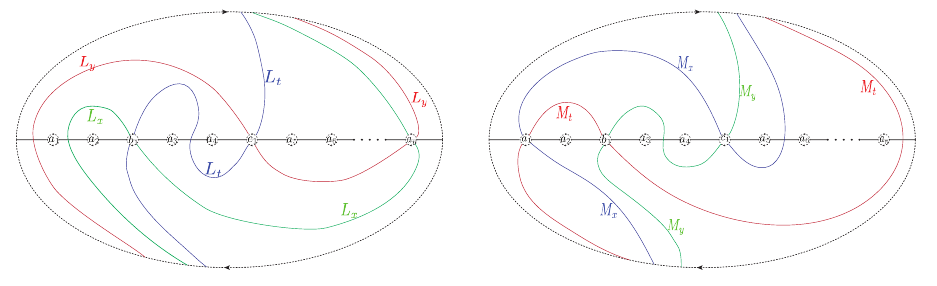}
\caption{LHS: $X_n$ with exceptional lines for $f_n$, RHS:  $X_n$ with exceptional lines for $f^{-1}_n$ }
\label{F:exceptional}
\end{figure}

 Let us highlight a couple of useful properties of $F_n$. To simplify the notations, we will use the same notation for the curves in $\ppr$ and their strict transforms on $X_n$. We will also use $a_i, i=1, \dots, n, b_1, c_1$ to denote exceptional curves, the corresponding centers of blowups, and the corresponding generators.  Notice that the strict transforms of three lines $L_t, L_x, L_y$ are pairwise disjoint on $X_n$. Similarly, the ones for $M_t, M_x, M_y$ are also pairwise disjoint on $X_n$.

\begin{lem}\label{L:basic} The automorphism $F_n: X_n \to X_n$ satisfies the following properties: 
\begin{enumerate}
\item  $F_n(p) \in a_1$ if and only if $p \in L_y$,
\item  $F_n(p) \in b_1$ if and only if $p \in L_x$,
\item  $F_n(p) \in c_1$ if and only if $p \in L_t$,
\item $F_n(p) \in a_i$ if and only if $p\in a_{i-1}$, for $i=2, \dots, n$, 
\item  $F_n(p) \in M_x$ if and only if $p \in c_1$,
\item  $F_n(p) \in M_y$ if and only if $p \in b_1$, and
\item  $F_n(p) \in M_t$ if and only if $p \in a_n$.
\end{enumerate}
\end{lem}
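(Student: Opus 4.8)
The plan is to deduce all seven statements from two inputs that are already available: the orbit structure of the exceptional data of $f_n$ (recorded in Theorems~\ref{T:rr} and~\ref{T:rat} together with the explicit formula~(\ref{E:feq})), and the standard mechanism by which $f_n$ is resolved to the automorphism $F_n$ on the blowup $X_n$ --- namely, a line contracted by $f_n$ to a blown-up point is carried by $F_n$ onto the corresponding exceptional curve, and the exceptional curve over a blown-up point of indeterminacy of $f_n$ is carried onto the strict transform of the line to which $f^{-1}_n$ contracts (see \cite{McMullen:2007, Diller:2011}). Since $F_n$ is an automorphism, hence a bijection of $X_n$, each ``if and only if'' reduces to a single set equality $F_n(A) = B$: indeed $F_n(A) = B$ forces $F_n(p) \in B \iff p \in A$.

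First I would pin down the dictionary between the parametrized centres of Theorem~\ref{T:rat}(4) and the coordinate description of~(\ref{E:feq}): $c_1 = f_n L_t = [0:1:0]$, $b_1 = f_n L_x = [0:0:1]$, and the forward $f_n$-orbit of $L_y$ is $L_y \mapsto a_1 \mapsto a_2 \mapsto \cdots \mapsto a_n$ with $a_n = [1:0:0]$, the remaining point of indeterminacy. Moreover, $a_1, \dots, a_{n-1}$ are not points of indeterminacy, and none of them lies on an exceptional line of $f_n$: if $a_{i-1}$ lay on, say, $L_t$, then $f_n(a_{i-1}) = f_n(L_t) = c_1 \neq a_i$, a contradiction (and likewise for $L_x$ and $L_y$). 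Hence near $a_{i-1}$ the map $f_n$ is a local isomorphism carrying $a_{i-1}$ to the blown-up point $a_i$ for $i = 2, \dots, n$. All of this is contained in Theorems~\ref{T:rr} and~\ref{T:rat} together with the exceptional data displayed just before the statement.

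Granting the dictionary, statements (1)--(3) are immediate: $f_n$ contracts $L_y, L_x, L_t$ to the blown-up points $a_1, b_1, c_1$, so $F_n$ maps the strict transforms of these lines onto the exceptional curves $a_1, b_1, c_1$. Statement (4) holds because, both $a_{i-1}$ and $a_i$ being blown up and $f_n$ a local isomorphism between neighbourhoods of them, $F_n$ maps the exceptional curve $a_{i-1}$ onto the exceptional curve $a_i$. Statements (5)--(7) are the dual picture: from the list for $f^{-1}_n$, that map contracts $M_x, M_y, M_t$ to $[0:1:0] = c_1$, $[0:0:1] = b_1$, $[1:0:0] = a_n$ respectively, so $f_n$ ``blows up'' these three indeterminacy points --- all of which are centres of blowups in $X_n$ --- onto $M_x, M_y, M_t$; hence $F_n$ maps the exceptional curves $c_1, b_1, a_n$ onto the strict transforms of $M_x, M_y, M_t$. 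Combining the seven set equalities with the bijectivity of $F_n$ gives the claimed equivalences.

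The only place demanding genuine care --- as opposed to the routine application of the blowup mechanism --- is the dictionary step: confirming that the three orbit strings of $f_n$ are precisely $L_y \to a_1 \to \cdots \to a_n \to M_t$, $L_t \to c_1 \to M_x$, and $L_x \to b_1 \to M_y$, and in particular that it is $a_n$ (not $b_1$ or $c_1$) that coincides with the indeterminacy point $[1:0:0]$, with $a_1, \dots, a_{n-1}$ honest non-indeterminate points lying off the coordinate axes. This is forced, however, by the description of $P_\alpha$ in Theorem~\ref{T:rr}(7) and the lists of exceptional lines and their images for $f_n$ and $f^{-1}_n$ given just before the lemma, so no new computation is needed.
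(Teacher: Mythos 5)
The paper states Lemma~\ref{L:basic} without proof, treating it as an immediate consequence of the orbit structure of the exceptional lines recorded in Section~\ref{S:rat} (the contractions of $f_n$ and $f_n^{-1}$, Theorem~\ref{T:rr}(7), and the standard blowup calculus). Your proof correctly and carefully spells out exactly that implicit reasoning --- the dictionary $c_1=[0:1:0]$, $b_1=[0:0:1]$, $a_n=[1:0:0]$, the orbit strings $L_y\to a_1\to\cdots\to a_n\to M_t$, $L_t\to c_1\to M_x$, $L_x\to b_1\to M_y$, the local-isomorphism argument near $a_1,\dots,a_{n-1}$, and the reduction of each biconditional to a set equality $F_n(A)=B$ via bijectivity --- so it matches the paper's intent.
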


Thus, for a given curve in $X_n$, its intersections with exceptional curves  $a_1, \dots, a_n, b_1, c_1$ and three lines $L_x,L_y,L_z$ determines the number of intersections of the $F_n$-image with each exceptional curve $a_1, \dots, a_n, b_1, c_1$ and three lines $M_x, M_y, M_z$. Additionally, we use three curves $M_t, M_x, M_y$ as barriers. In Figure \ref{F:exceptional}, the strict transforms of $L_t, L_y, L_x$ are depicted on the left-hand side, and the strict transforms of $M_t, M_x, M_y$ are on the right-hand side. To determine $\hat F_{n*}\gamma$ for each generator $\gamma \in \pi_1(\hat X_n)$, we use the following steps :
\begin{enumerate}
\item Let $\gamma$ be the curve representing the corresponding generator of $\pi_1(\hat X_n)$.
\item Determine the intersection of $\gamma$ with $L_t, L_x, L_y, c_1, b_1$, and  $a_i,i=1, \dots, n$. This can be done by placing the curve $\gamma$ on the left-hand side of Figure \ref{F:exceptional}.
\item Using $M_t, M_x, M_y$ as barriers, determine the image $\hat F_n(\gamma)$ and its intersections with $c_1, b_1$, and  $a_i,i=1, \dots, n$. This can be done by placing the curve $\gamma$ on the right-hand side of Figure \ref{F:exceptional}.
\item Utilize the reading curves to determine the $\pi_1$ class of $\hat F_n(\gamma)$. 
\end{enumerate}

Let us outline the procedure using only $a_1$ and $c_1$ to avoid repetition of essentially identical computations. 

\subsubsection{Image of a generator $a_1$} Let $\tilde a_1$ be the curve representing the generator $a_1$. The curve, $\tilde a_1$, intersects $L_y, a_1, L_x, L_t$ in cyclic order by Lemma \ref{L:basic} : \[ \tilde a_1 :   L_y \ \to \ a_1  \ \to \ L_x  \ \to \ L_t .\] Consequently, $F_n(\tilde a_1)$ intersects $a_1, a_2, b_1$, and $c_1$ in cyclic order. Since $F_n(\tilde a_1)$ can not intersect $M_t, M_x, M_y$ or the invariant cubic, it must cross $a_1$ and $b_1$ from top to bottom and cross $a_2$, and $c_1$ from bottom to top, as depicted in Figure \ref{F:a1}. In the figure, curves $\tilde a_1$ and $F_n(\tilde a_1)$ are shown as thick dash-dotted directed curves.  Therefore we have: \[ \hat F_{n*} a_1 = [ F_n \tilde a_1]  = a_1 a_2^{-1} b_1 c_1^{-1}. \]

\begin{figure}
\includegraphics[width=6in]{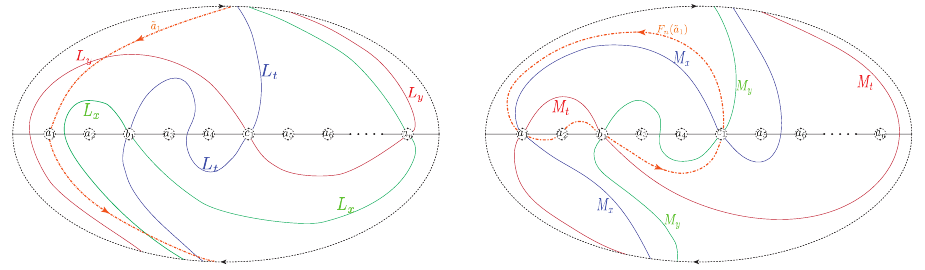}
\caption{LHS: $\tilde a_1$ with forward exceptional curves $L_t, L_x, L_y$; RHS:  $F_n(\tilde a_1)$ with backward exceptional curves $M_t, M_x, M_y$ }
\label{F:a1}
\end{figure}

\subsubsection{Image of a generator $c_1$} Similarly, let $\tilde c_1$ be the curve representing the generator $c_1$. The curve, $\tilde c_1$, only intersects $ c_1$ and  $L_x$ in cyclic order by Lemma \ref{L:basic} : \[ \tilde c_1 :   \ c_1  \ \to \ L_x  .\] Thus, its image, $F_n(\tilde c_1)$, only intersects $M_x$ and $b_1$ in cyclic order. As $F_n(\tilde c_1)$ can not intersect $M_t, M_y$, and the invariant cubic, it must cross $b_1$  from top to bottom as shown in Figure \ref{F:c1}. Therefore, we have: \[ \hat F_{n*} c_1 = [ F_n \tilde c_1]  = b_1. \]

\begin{figure}
\includegraphics[width=6in]{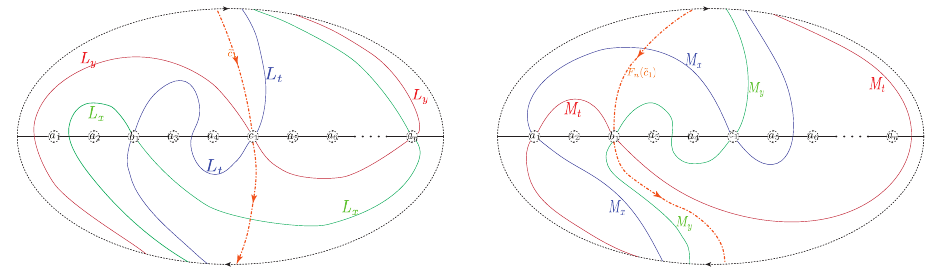} 
\caption{LHS: $\tilde c_1$ with forward exceptional curves $L_t, L_x, L_y$, RHS:  $F_n(\tilde c_1)$ with backward exceptional curves $M_t, M_x, M_y$ }
\label{F:c1}
\end{figure}

\subsection{Action $\hat F_{n*}$ on $\pi_1(\hat X_n)$} By applying the same procedure to all generators, we derive the induced action on the fundamental group of $\hat X_n$:

\begin{prop}\label{P:faction}The induced action $\hat F_{n*}$ on $\pi_1(\hat X_n)$ is defined as follows: 
\begin{equation}\label{E:factionP} \hat F_{n*}\ \ :\ \  \left\{\ \  \begin{aligned} & a_1 \mapsto a_1 a_2^{-1} b_1 c_1^{-1} \\
& a_2 \mapsto a_1 b_1^{-1} a_3 c_1^{-1}\\
& a_3 \mapsto a_1 c_1 a_4^{-1}b_1 \\
& a_4 \mapsto a_1 a_5 c_1^{-1} b_1\\
& a_i \mapsto c_1 a_{i+1}^{-1}a_1^{-1} b_1\quad i=5, 6, \dots, n-1\\
&a_n \mapsto c_1\\
&b_1 \mapsto a_1\\
&c_1 \mapsto b_1.\\
\end{aligned}\right.\end{equation}
\end{prop}

The growth rate $\rho(h_*)$ of the action $h_*$ on the fundamental group $\pi_1(S)$ of a surface $S$ is defined \cite{Bowen} by 
 \[ \rho (h_*) := \sup_{g \in G} \{ \limsup_{n \to \infty} ( \ell_G (h^n_* g))^{1/n}\} \]
 where $G$ is a set of generators of $\pi_1(S)$, and $\ell_G(w)$ is the minimal length among all words representing $w$ with respect to a fixed set of generators of $G$. 

\begin{prop}\label{P:fgrow} The growth rate, $ \rho(F_{n*})$, of $\hat F_{n*}$ on  $\pi_1(\hat X_n)$ is given by $\lambda_n$, the largest real root of $ t^{n} (t^3-t-1) + t^3+t^2-1$.
\end{prop}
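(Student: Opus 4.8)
The plan is to read off the growth rate of $\hat F_{n*}$ from a combinatorial/linear-algebraic model of the action on the free group $\pi_1(\hat X_n)$, and then match the resulting characteristic polynomial to $\chi_n(t)$. First I would pass to abelianization: the map $\hat F_{n*}$ on $\pi_1(\hat X_n)\cong F_{n+2}$ induces a linear map $A_n$ on $H_1(\hat X_n;\mathbb{Z})\cong \mathbb{Z}^{n+2}$, whose matrix in the basis $a_1,\dots,a_n,b_1,c_1$ is transcribed directly from Proposition~\ref{P:faction} by replacing each relator word by its exponent sum in each generator. Because the spectral radius of the induced action on homology is already known to be $\lambda_n$ (Theorem~\ref{T:rat}(2), equivalently $\log\lambda_n$ is the entropy from \cite{Diller-Kim}), the characteristic polynomial of $A_n$ has $\lambda_n$ as its largest root; one should in fact verify by a short direct computation that $\det(tI - A_n)$ equals $\chi_n(t)$ up to sign and cyclotomic factors, using the near-triangular structure of $A_n$ coming from the shift $a_i\mapsto (\text{stuff})\,a_{i+1}^{\pm 1}$ for $i=5,\dots,n-1$ together with $b_1\mapsto a_1$, $c_1\mapsto b_1$, $a_n\mapsto c_1$. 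This gives $\rho(\hat F_{n*})\ge \lambda_n$, since the word length $\ell_G(\hat F_{n*}^k g)$ is bounded below by the $\ell^1$-norm of the abelianized vector, which grows like $\lambda_n^k$ for a generic $g$.

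For the reverse inequality $\rho(\hat F_{n*})\le \lambda_n$ I would argue that the free-group endomorphism does not grow faster than its abelianization. The clean way is to introduce the substitution (non-negative) matrix $B_n$ whose $(i,j)$ entry counts the total number of occurrences (with either sign) of the $j$-th generator in the image word $\hat F_{n*}(i$-th generator$)$; reading Proposition~\ref{P:faction}, each image word has length $4$ for $a_1,\dots,a_{n-1}$ and length $1$ for $a_n,b_1,c_1$. Then for any generator $g$ one has $\ell_G(\hat F_{n*}^k g)\le \|B_n^k e_g\|_1 \le C\,\mu_n^k$, where $\mu_n$ is the Perron–Frobenius eigenvalue of $B_n$; hence $\rho(\hat F_{n*})\le \mu_n$. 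It remains to see $\mu_n=\lambda_n$. Here I would observe that $|B_n|$ and $A_n$ have the same entries up to the signs that were introduced in the relators, and that these sign changes are exactly accounted for by a diagonal conjugation $D B_n D^{-1}$ with $D$ a $\pm1$ diagonal matrix — i.e. the signs occurring in Proposition~\ref{P:faction} are \emph{coherent} in the sense that they come from a consistent choice of orientations/co-orientations of the generating curves (this is precisely what the reading-curve bookkeeping guarantees). If that coherence holds, $B_n$ and $A_n$ are conjugate over $\mathbb{Z}$, so $\mu_n=\lambda_n$, and combining with the previous paragraph yields $\rho(\hat F_{n*})=\lambda_n$.

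The main obstacle I anticipate is precisely the sign-coherence step: a priori the nonabelian cancellation in iterates of $\hat F_{n*}$ could make the free-group growth strictly smaller than the non-negative matrix bound, or the $\pm$ pattern in Proposition~\ref{P:faction} might not be globally diagonalizable to all-positive, in which case $\mu_n>\lambda_n$ and the crude upper bound is not tight. Two fallbacks are available. One is to check directly that no catastrophic cancellation occurs — e.g. that $\hat F_{n*}$ is (after perhaps passing to a power, or restricting to a well-chosen conjugacy class such as the class of the boundary curve / the invariant cubic) a train-track-like map with no backtracking, so that $\ell_G(\hat F_{n*}^k g)$ is comparable to $\|B_n^k e_g\|_1$; this is the natural route given that the eventual goal (Main Theorem) is to identify $\hat F_n$ with a pseudo-Anosov map, whose stretch factor $\lambda_n$ would then immediately bound $\rho(\hat F_{n*})$ from above by a standard lemma (a homeomorphism's action on $\pi_1$ grows no faster than its topological entropy $=\log\lambda_n$). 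Indeed, the quickest complete argument is: the lower bound $\rho(\hat F_{n*})\ge\lambda_n$ from abelianization, together with $\rho(\hat F_{n*})\le \exp(\htop(\hat F_n))=\lambda_n$ from \cite{Bowen} once $\hat F_n$ is known (or assumed, citing \cite{Kim-Klassen}) to have entropy $\log\lambda_n$, forces equality. I would present the abelianization lower bound and the characteristic-polynomial identification $\det(tI-A_n)\doteq\chi_n(t)$ in full, and invoke the entropy bound (already available from the cited work of Diller–Kim and Kim–Klassen, since $C_n$ is a single simple closed curve so cutting changes neither entropy nor the relevant growth rate) for the upper bound.
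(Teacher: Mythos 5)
Your ``quickest complete argument''---the abelianization lower bound $\rho(\hat F_{n*})\ge\lambda_n$ via the characteristic polynomial of the induced map on $H_1(\hat X_n)$, combined with Bowen's inequality $\log\rho(\hat F_{n*})\le h_{top}(F_n)=\log\lambda_n$ for the upper bound---is exactly the paper's proof, so you end in the right place. The alternative upper bound through the nonnegative substitution matrix $B_n$, however, is a dead end here: the sign-coherence you flag as a worry does in fact fail. Reading off Proposition~\ref{P:faction}, $a_1\mapsto a_1a_2^{-1}b_1c_1^{-1}$ forces $\epsilon_{a_1}\epsilon_{c_1}=-1$, while $a_3\mapsto a_1c_1a_4^{-1}b_1$ forces $\epsilon_{a_3}\epsilon_{a_1}=+1$ (hence $\epsilon_{a_3}=\epsilon_{a_1}$) and $\epsilon_{a_3}\epsilon_{c_1}=+1$, a contradiction. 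So no diagonal $\pm1$ conjugation turns $A_n$ into $B_n$, the Perron--Frobenius eigenvalue $\mu_n$ of $B_n$ need not be $\lambda_n$, and that route does not close. You were right to treat it as a fallback needing verification and to pivot to the entropy argument, which is what the paper does.
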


\begin{proof}
By Bowen \cite{Bowen}, the logarithm of the growth rate of the induced action on the fundamental group gives a lower estimate of the topological entropy. By Theorem \ref{T:rr}, the topological entropy $h_{top}(F_n)$ is $\log \lambda_n$. Since $\hat F_n$ is the restriction of $F_n$ on $\hat X_n \subset X_n$, we have \[ \log \rho(F_{n*}) \le h_{top} (F_n) = \log \lambda_n.  \] 
On the other hand, the induced action on the abelianization of the fundamental group is given by $(n+2) \times (n+2)$ matrix $M_n$ using the ordered basis $\langle a_1, a_2, \dots, a_n, b_1, c_1 \rangle$:
\[M_n \ = \  \begin{bmatrix} 1&1&1&1&-1& \cdots  &-1&0&1 &0\\ -1 &0&&&&&&&&0 \\&1 &&&&&&&&\\ &&-1&&&&&0&&\\&&&1&&&&&&\\&&&&-1&&&&&\\&&&&&\ddots&&&&\\&&&0&&&-1&&&\\1&-1&1&1&1&\cdots&1&0&0&1\\-1&-1&1&-1&1&\cdots&1&1&0&0
\end{bmatrix}. \]
Direct computation shows that the characteristic polynomial of $M_n$ is given by $( (-t)^{n} (-t^3+t-1) - t^3+t^2-1)/(t+1)$. Thus, the spectral radius of $M_n$ is the largest root $\lambda_n$ of $ t^{n} (t^3-t-1) + t^3+t^2-1$. The growth rate of the action on the first homology group is bounded above by the growth rate of the action on the fundamental group.  It follows that 
\[ \log \lambda_n \le \log \rho(F_{n*}) \le \log \lambda_n.\]
\end{proof}

\medskip
Bestvina and Handel \cite{Bestvina-Handel:1992} proved that if an automorphism of a free group is irreducible, then its fixed subgroup is either trivial or infinite cyclic (in particular, its rank is at most~$1$). Their proof proceeds by showing that an irreducible topological representative can be turned, by a sequence of folds, into an irreducible train-track map, and that such a train-track has at most one closed Nielsen path.

\begin{thm}[Bestvina-Handel {\cite{Bestvina-Handel:1992}}] \label{T:BH}
If $\phi \colon F_n \to F_n$ is an irreducible automorphism of the free group $F_n$, then the fixed subgroup
\[
\mathrm{Fix}(\phi) = \{x \in F_n \mid \phi(x) = x\}
\]
is either trivial or infinite cyclic. Equivalently, $\operatorname{rank}(\mathrm{Fix}(\phi)) \le 1$.
\end{thm}

For each $n\ge 8$, define an element $\mathcal{N}_n \in \pi_1(\hat X_n)$ by
\[
\mathcal{N}_n =
\begin{cases}
a_1 a_2^{-1} b_1 a_3^{-1} a_4 c_1^{-1} a_5 a_6^{-1} \cdots a_{n-1}^{-1} a_n, 
& \text{if $n$ is odd}, \\[6pt]
a_1 a_2^{-1} b_1 a_3^{-1} a_4 c_1^{-1} a_5 a_6^{-1} \cdots a_{n}^{-1} 
a_1^{-1} a_2 b_1^{-1} a_3 a_4^{-1} c_1 a_5^{-1} a_6 \cdots a_{n-1}^{-1} a_n, 
& \text{if $n$ is even}.
\end{cases}
\]

\begin{prop}\label{P:irreducible}
For each $n \ge 8$, if $x \in \pi_1(\hat X_n)$ is fixed by $\hat F_{n*}$, then 
\[
x = \mathcal{N}_n^k \quad \text{for some } k \in \mathbb{Z}.
\]
\end{prop}


\begin{proof}
From the induced action in \eqref{E:factionP} we have the chain
\(a_n \to c_1 \to b_1 \to a_1\).
Hence each of \(b_1, c_1\), and \(a_i\) for \(2\le i\le n\) reaches \(a_1\) in the directed
transition graph of \(\hat F_{n*}\).
Conversely, starting from \(a_1\) one reaches every generator after finitely many iterates, so every vertex reaches every other vertex.
Therefore the transition digraph is strongly connected, and the transition matrix of
\(\hat F_{n*}\) is irreducible.

By Bestvina-Handel's theorem 
the fixed subgroup of \(\hat F_{n*}\) has rank at most \(1\).
A direct computation using \eqref{E:factionP} shows that
\(\hat F_{n*}(\mathcal N_n)=\mathcal N_n\),
so the fixed subgroup is infinite cyclic, generated by the primitive root of \(\mathcal N_n\)
(in particular, by \(\mathcal N_n\) itself if it is not a proper power).
\end{proof}

\section{From Dehn twists}\label{S:twists}
It is a well-known fact that the mapping class group $\mathcal{M}_{g, n}$ of an orientable surface of genus $g$ with $n$ punctures is finitely generated by Dehn twists \cite{Dehn:1938, Lickorish:1964}. For detailed discussions and proofs, we refer a well-written book by Farb and Margalit \cite[Chapter~4]{Farb-Margalit:2012}. In this section, we construct a homeomorphism on $\hat X_n$ using a set of two-sided curves. 

\subsection{Dehn twists}\label{SS:twist} Let $A= S^1 \times [0,1]$ be the annulus. With $(\theta, t) \mapsto (\theta, t+1)$, we embed $A$ into the plane with polar coordinates $(\theta, r)$, and the orientation of $A$ is given by the standard orientation of the plane. The twist map $\tau:A \to A$ is defined by $\tau(\theta, t) = (\theta + 2 \pi t, t)$. 

Suppose $\gamma$ is a two-sided simple closed curve on a surface $X$, and let $\phi: A \to N_\gamma$ be an orientation-preserving homeomorphism between $A$ and a regular neighborhood $N_\gamma$ of $\gamma$. The Dehn twist $\tau_\gamma$ about $\gamma$ is a homeomorphism on $X$ defined by the following formula:
\[ \tau_\gamma(x) \ = \ \left \{ \ \ \begin{aligned} &\phi \circ \tau\circ \phi^{-1}(x) \qquad &\text{ if  } x \in \phi(A) = N_\gamma \\ & x  &\text{if  } x \in X \setminus N_\gamma. \phantom{AA,}\\ \end{aligned} \right. \]

\subsection{A set of two-sided curves}\label{SS:gammas} A curve passing through cross-caps an even number of times is two-sided (i.e., a core of annulus), while a curve passing through cross-caps an odd number of times is one-sided (i.e., a core of M\"{o}bius band). Consider the curves $\gamma_i, i=1, \dots, n-2$ as shown in Figure \ref{F:curves}. The curve $\gamma_1$ is a blue dash-dotted curve passing through two cross-caps $b_1$ and $c_1$. The curve $\gamma_2$ is a purple dash-dotted curve passing through two cross-caps $c_1$ and $a_n$. $\gamma_3$ is a red dash curve passing through four cross-caps $b_1, c_1, a_n$ and the boundary oval curve. $\gamma_{n-1}$ is a solid orange curve passing through cross-caps four times in the order $a_4, c_1, a_5$ and $c_1$ again. Similarly, $\gamma_{n+1}$ passes through $a_2, b_1, a_3$ and $b_1$ in that order. All other curves passes through two cross-caps, $a_i$ and $a_{i+1}$ for $i=1,3,5,6, \dots, n-1$. Figure \ref{F:curves} is drawn with the invariant curve; however, all $\gamma_i$ curves are disjoint from the invariant cubic and thus belong to $\hat X_n$ for $i=1, \dots, n+2$. The incidence graph of these curves is shown in Figure \ref{F:incident}. As you can see, the incidence graph is identical to the $E_{n+2}$ diagram. 

\begin{figure}
\includegraphics[width=5in]{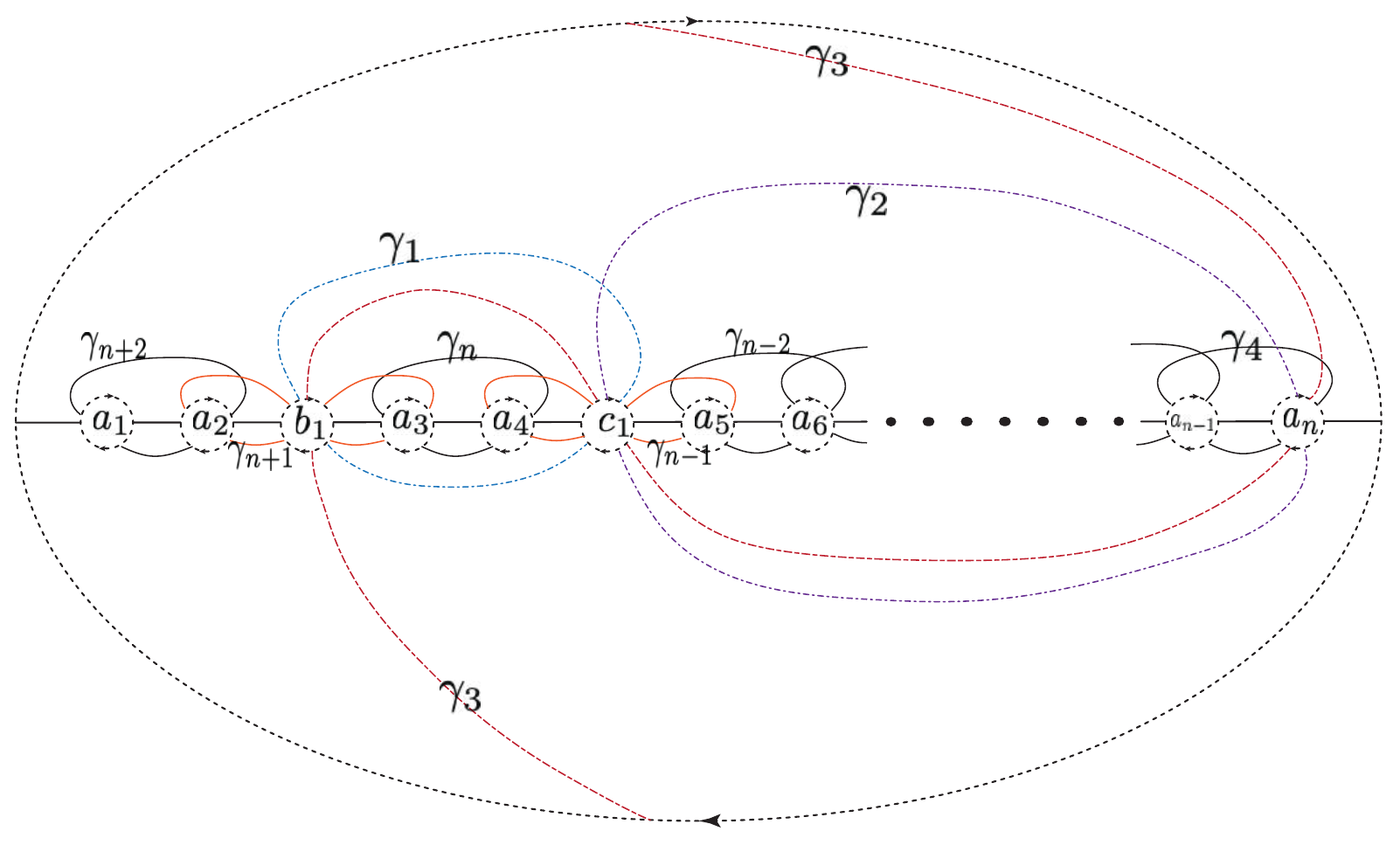}
\caption{The incidence graph of the set of curves $\gamma_i, i=1, \dots, n-2$  on $\hat X_n$}
\label{F:curves}
\end{figure}

\begin{figure}
\includegraphics[width=4in]{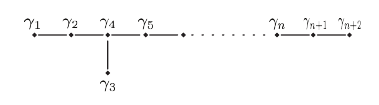}
\caption{The incident graph of the set of curves $\gamma_i, i=1, \dots, n-2$  on $\hat X_n$}
\label{F:incident}
\end{figure}

Let $\tau_i$ be the Dehn twist about $\gamma_i$, $i=1, \dots, n+2$, and let $T_n$ is a composition of $n+2$ Dehn twists, $\tau_i$, $i=1, \dots, n+2$:
\begin{equation}\label{E:twist}
T_n\ =\  \tau_{n+2} \circ \tau_{n+1} \circ \tau_n \circ \cdots \circ \tau_4\circ \tau_3\circ \tau_2 \circ \tau_1.\end{equation}

\subsection{Action on the fundamental group} With the help of reading curves, we can compute the induced action $\tau_{i*}$ on $\pi_1(\hat X_n)$. The procedure is essentially identical to the cases of rational surface automorphisms in Section \ref{S:rat}, let us list the induced actions. 

\[\tau_{1*}\  :\  \left\{ \begin{aligned} &b_1 \mapsto b_1 c_1^{-1}b_1,\\& c_1 \mapsto b_1, \\& a_i \mapsto b_1 c_1^{-1} a_i c_1^{-1} b_1, \quad i=3,4,\\&\text{fix   otherwise,} \end{aligned}\right. \quad\text{and} \quad \tau_{2*}\  :\  \left\{ \begin{aligned} &c_1 \mapsto c_1 a_n^{-1}c_1,\\& a_n \mapsto c_1, \\& a_i \mapsto c_1 a_n^{-1} a_i a_n^{-1} c_1, \quad i=5,6, \dots, n-1,\\&\text{fix   otherwise.} \end{aligned}\right. \]
\[\tau_{3*}\  :\  \left\{ \begin{aligned} &a_i \mapsto a_i b_1^{-1} c_1 a_n^{-1} \quad i =1,2,\\& a_i \mapsto b_1 a_n c_1^{-1} a_i \quad i= 3,4, \\& a_i \mapsto a_i a_n^{-1} b_1^{-1} c_1 \quad i=5,6, \dots, n-1,\\&\text{fix   otherwise.} \end{aligned}\right.\]
For $i= 4,5,\dots, n-2, n, n+2$, 
\[ \tau_{i*}\ :\  \left\{ \begin{aligned} &a_{n+4-i} \mapsto a_{n+3-i},\\& a_{n+3-i} \mapsto a_{n+3-i} a_{n+4-i}^{-1} a_{n+3-i},\\&\text{fix   otherwise.} \end{aligned}\right.\]
Lastly, we have:
\[ \tau_{n-1*} \ :\  \left\{ \begin{aligned} &a_{4} \mapsto a_4 c_1^{-1} a_5 c_1^{-1} a_4,\\& a_5 \mapsto c_1 a_4^{-1} c_1,\\&\text{fix   otherwise,} \end{aligned}\right.\quad\text{and} \quad  \tau_{n+1*} \ :\  \left\{ \begin{aligned} &a_{2} \mapsto a_2 b_1^{-1} a_3 b_1^{-1} a_2,\\& a_3 \mapsto b_1 a_2^{-1} b_1,\\&\text{fix   otherwise.} \end{aligned}\right.\]

Combining the above computations, we have:
\begin{prop}\label{P:taction} The induced action $T_{n*}$ on $\pi_1(\hat X_n)$ is defined by the following formula: 
\[ T_{n*}\ \ :\ \ \left\{\ \  \begin{aligned}  & a_1 \mapsto a_1 a_2^{-1} b_1 c_1^{-1} \\
& a_2 \mapsto a_1 b_1^{-1} a_3 c_1^{-1}\\
& a_3 \mapsto a_1 c_1 a_4^{-1}b_1 \\
& a_4 \mapsto a_1 a_5 c_1^{-1} b_1\\
& a_i \mapsto c_1 a_{i+1}^{-1}a_1^{-1} b_1\quad i=5, 6, \dots, n-1\\
&a_n \mapsto c_1\\
&b_1 \mapsto a_1\\
&c_1 \mapsto b_1.\\
\end{aligned}\right.\]
\end{prop}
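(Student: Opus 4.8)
The plan is to compute the composition $T_{n*} = \tau_{n+2*} \circ \tau_{n+1*} \circ \cdots \circ \tau_{2*} \circ \tau_{1*}$ by tracking the image of each generator $a_1, \dots, a_n, b_1, c_1$ of the free group $\pi_1(\hat X_n)$ through the twists in the prescribed order, using the individual formulas $\tau_{i*}$ listed just above the statement. Since each $\tau_{i*}$ is an explicit automorphism of the free group that fixes all but a few generators, this is in principle a finite (if lengthy) bookkeeping exercise. The key organizing observation is that the curves $\gamma_i$ are ordered so that $\gamma_i$ and $\gamma_j$ are disjoint unless $|i-j|$ corresponds to an edge in the $E_{n+2}$ incident graph (Figure~\ref{F:incident}); consequently most pairs of twists commute, and the effect of a given $\tau_{i*}$ only interacts with the few twists that are its graph-neighbors. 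I would exploit this to localize the computation: group the generators according to which $\gamma_i$ they meet, and propagate each through the short subchain of twists that actually move it.

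The concrete steps I would carry out, in order: (1) Note that $\tau_{1*}$ and $\tau_{2*}$ are applied first and handle the ``bottom'' of the diagram ($b_1, c_1, a_n$ and the $a_i$ for large $i$); then $\tau_{3*}$ couples the $b_1$-side and $c_1$-side data together; then the generic twists $\tau_{4*}, \dots, \tau_{n-2*}, \tau_{n*}, \tau_{n+2*}$ perform the ``shift'' $a_{n+4-i} \mapsto a_{n+3-i}$ that ultimately produces the cyclic renumbering $a_i \mapsto (\cdots) a_{i+1} (\cdots)$; and finally $\tau_{n-1*}$ and $\tau_{n+1*}$ handle the two special trivalent vertices feeding into $a_4, a_5$ and $a_2, a_3$. (2) For each target generator, write down the (short) ordered list of twists that move it and compose only those, using disjointness to drop the rest. (3) Verify that the eight output words match those in Proposition~\ref{P:faction} exactly. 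Since the conclusion is that $T_{n*}$ agrees with $\hat F_{n*}$ on the nose (not merely up to conjugacy or up to the puncture-encircling elements), the bookkeeping must be exact; I would double-check by confirming that both sides induce the same matrix $M_n$ on the abelianization, which is already pinned down in the proof of Proposition~\ref{P:fgrow}, as a consistency check rather than a proof.

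The main obstacle is purely combinatorial control of the non-commuting part: the three twists $\tau_{1*}, \tau_{2*}, \tau_{3*}$ and the two exceptional twists $\tau_{n-1*}, \tau_{n+1*}$ are the ones with ``quadratic'' action (sending $x \mapsto x y^{-1} x$ type words), and getting the conjugating prefixes/suffixes to land correctly when these are interleaved with the $n-3$ shifting twists requires care about the order in which $a_1$ (which is introduced only at the very end, via $b_1 \mapsto a_1$ coming from the last twist on the $b_1$-chain) appears in the words. I would set up the computation so that $a_1$, $b_1$, $c_1$ are carried symbolically through to the end, resolving $b_1 \mapsto a_1$, $c_1 \mapsto b_1$, $a_n \mapsto c_1$ last; this makes transparent why the four ``irregular'' images $a_1, \dots, a_4$ differ from the regular pattern $a_i \mapsto c_1 a_{i+1}^{-1} a_1^{-1} b_1$ — they are precisely the indices whose $\gamma$-curves touch the two trivalent branch vertices of $E_{n+2}$. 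Once this localization is in place the remaining verification is routine substitution.
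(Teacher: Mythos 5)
Your proposal is correct and matches the paper's approach exactly: the paper simply composes the eight explicitly-listed twist actions $\tau_{1*},\dots,\tau_{n+2*}$ in the order given in (\ref{E:twist}) and records the result ("Combining the above computations, we have\dots"), which is precisely the bookkeeping exercise you outline. Your organizing observations (disjoint curves give commuting twists, the $E_{n+2}$ tree structure localizes the computation to short chains, and the two trivalent vertices explain the four irregular images) are sound, and the direct calculation does produce the stated formula, so no gap.
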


\subsection{Penner's construction}\label{pen} Penner's construction is a well-known method for constructing pseudo-Anosov mappings. (See \cite{Penner:1988,Farb-Margalit:2012,Liechi-Strenner:2020}.) A \textit{multicurve} is a finite collection of disjoint non-isotopic essential simple closed curves on a surface. 

We say two simple closed curves $\alpha, \beta$ on a surface are in \textit{minimal position} if they have the minimal intersection number in their homotopy classes: \[ | \alpha \cap \beta |   = \ \min_{\alpha' \sim \alpha,\,\, \beta' \sim \beta } |\alpha' \cap \beta'|. \] 
The minimum on the right-hand side is called the geometric intersection number $\iota(\alpha, \beta)$ of two curves, $\alpha, \beta$. Also, we say a collection of simple closed curves $C$ on a surface $S$ is \textit{filling} if the curves are in pairwise minimal position and each component of $S\setminus C$ is either a disk or a once-punctured disk, or equivalently if every essential curve intersects $S$
\begin{thm}\cite{Penner:1988}\label{T:pen} Let $A=\{\alpha_i, i=1, \dots, n\}$ and $B=\{\beta_j, j=1, \dots, m\}$ be a pair of filling multicurves on an orientable surface $S$. Then, any product of right (positive) Dehn twists about $\alpha_i$ and left (negative) Dehn twists about $\beta_j$ is pseudo-Anosov if all $n+m$ twists appear at least once. 
\end{thm}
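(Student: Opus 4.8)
The plan is to prove this by the classical measured–train-track criterion for pseudo-Anosov maps. First I would put $A$ and $B$ in minimal position and, after a small isotopy, arrange that $A\cup B$ has only transverse double points. At each intersection point of some $\alpha_i$ with some $\beta_j$ I would smooth the two strands according to Penner's sign convention — always resolving the crossing so that one turns from the $\beta$-strand onto the $\alpha$-strand consistently with a fixed local picture — producing a bigon track $\tau$ built from $A\cup B$. Since $A\cup B$ fills $S$, every complementary region of $\tau$ is a disk, a once-punctured disk, or a bigon; collapsing the bigons yields a genuine filling train track that still carries everything carried by $\tau$. So from the start $\tau$ is a filling train track.

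The key lemma is that, for this choice of smoothing, each right twist $T_{\alpha_i}$ and each left twist $T_{\beta_j}^{-1}$ carries $\tau$ into itself: $T_{\alpha_i}(\tau)\prec\tau$ and $T_{\beta_j}^{-1}(\tau)\prec\tau$. This is a local check near the crossings: the twist drags the crossing strands across in precisely the direction in which the smoothing was chosen, so the image train track is carried, and it is exactly here that the two families must be twisted with opposite signs (a positive $\beta$-twist would push the wrong way). On the cone $W(\tau)\cong\mathbb{R}^{N}_{\ge 0}$ of non-negative weights satisfying the switch conditions, each such twist then induces a non-negative transition matrix equal to the identity plus a non-negative matrix supported on the branches met by $\alpha_i$ (resp.\ $\beta_j$). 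In particular each transition matrix is non-negative and invertible.

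Next I would compose. Given any word in the $T_{\alpha_i}$ and $T_{\beta_j}^{-1}$ in which every one of the $n+m$ generators occurs at least once, the resulting homeomorphism $f$ carries $\tau$ to itself with transition matrix $M$ equal to the product of the individual matrices in the order given by the word. The main obstacle is to show that $M$ is Perron–Frobenius, i.e.\ primitive: one must track how weight spreads from branch to branch under successive twists and show that, because $A\cup B$ is filling, the "dependency graph" of branches is strongly connected, while the hypothesis that all $n+m$ twists appear guarantees that $M$ actually uses every curve of $A$ and of $B$, so no coordinate subspace is invariant and some power of $M$ is strictly positive. This mixing estimate — essentially Penner's — is the heart of the argument; everything else is formal.

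Finally I would invoke the standard consequence: a homeomorphism of $S$ carrying a filling train track $\tau$ to itself with primitive transition matrix $M$ is pseudo-Anosov, its dilatation being the Perron–Frobenius eigenvalue $\lambda>1$ of $M$, with invariant measured foliations whose leaves run along $\tau$ carrying the transverse measures given by the left and right Perron–Frobenius eigenvectors of $M$. Applied to our $f$ and $M$ this finishes the proof. (A conceptually parallel route avoids train tracks: a ping-pong argument in $\mathcal{ML}(S)$ using the cone spanned by $A$ and the cone spanned by $B$, where positive $\alpha$-twists preserve one side and negative $\beta$-twists the other, so that a word using all generators maps the union of the cones strictly inside one of them and one reads off source–sink dynamics on $\mathbb{P}\mathcal{ML}(S)$; but the train-track version has the advantage of exhibiting the dilatation directly as $\lambda$.)
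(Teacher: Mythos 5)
The paper does not actually prove this statement: it is quoted from Penner's 1988 article and used as a black box, so there is no in-paper argument to compare your sketch against. What you have written is, at the level of a sketch, faithful to Penner's own train-track proof: smooth the crossings of $A\cup B$ with the sign convention dictated by the required twist directions, note that the resulting bigon track $\tau$ is filling because $A\cup B$ is, verify the local carrying relations $T_{\alpha_i}(\tau)\prec\tau$ and $T_{\beta_j}^{-1}(\tau)\prec\tau$, and then argue that the product of the corresponding non-negative transition matrices is Perron--Frobenius whenever each of the $n+m$ twists appears, so that the train-track criterion yields a pseudo-Anosov class with dilatation the leading eigenvalue. You correctly locate the two places where actual work is hidden: the sign-sensitive carrying lemma (the local picture that fails if one mixes the sign conventions) and the primitivity of the composite transition matrix, which requires the filling hypothesis to make the branch-incidence graph strongly connected. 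The one phrase I would push back on is ``collapsing the bigons yields a genuine filling train track'': Penner does not collapse, he works with the bigon track throughout and must separately check recurrence and transverse recurrence before the pseudo-Anosov criterion applies; these do follow from the filling hypothesis, but they are an honest step, not a tautology. Your aside about the ping-pong argument in $\mathcal{ML}(S)$ is also a legitimate alternative route, at the cost of not directly exhibiting the dilatation as a Perron--Frobenius eigenvalue.
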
 

\begin{figure}
\includegraphics[width=6in]{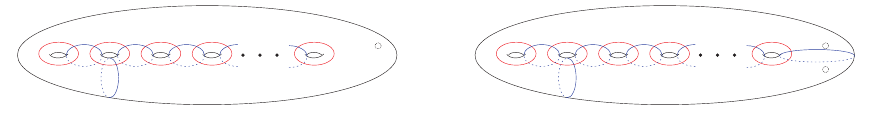} 
\caption{Curves in $A$ and $B$. LHS is shown the case when $n$ is even and RHS is for the case when $n$ is odd. }
\label{F:ab}
\end{figure}

Because of the simplicity of the incident graph, we can choose two multi-curves: 
\[ A\ =\  \{\gamma_1\} \cup \{ \gamma_{i}: i \text{ is even, and } 3 \le i \le n+2\}, \quad  B\ =\ \{ \gamma_2\} \cup \{ \gamma_{i}: i \text{ is odd, and } 3 \le i \le n+2\}. \]
Since $\hat X_n$ is homeomorphic to an orientable surface of genus $g= \lfloor (n+2)/2 \rfloor$ with one or two punctures, we can arrange $A$ and $B$ as shown in Figure \ref{F:ab} . In Figure \ref{F:ab}, the dotted circles are punctures, the curves in $A$ are drawn in red curves, and the curves in $B$ are shown in blue curves.
Define a Dehn twist $\tilde \tau_i $ by \[ \tilde \tau_i \ = \ \left\{ \begin{aligned} &\tau_{\gamma_i} \qquad\,\, \text{if } \gamma_i \in A\\&\tau^{-1}_{\gamma_i} \qquad \text{if } \gamma_i \in B. \end{aligned} \right. \]

Suppose $n\ge 8$ and suppose $\sigma \in \Sigma_{n+2}$ is a permutation. Let $T_\sigma$ be a product of Dehn twists defined by 
\[ T_\sigma \ = \ \tilde \tau_{\sigma(1)}\circ \tilde \tau_{\sigma(2)} \circ \cdots \circ \tilde \tau_{\sigma(n+2)}.\]
Then, by Penner's construction in Theorem \ref{T:pen}, it is clear that the product $T_{\sigma}$ is pseudo-Anosov. 


For Penner's construction, there is a simple and elegant method to compute the stretching factor using an incident matrix of curves in $A \cup B$. (See \cite{Penner:1988,  Strenner:2017}.) A brief summary is as follows. Let $\Omega$ be the intersection matrix of $\gamma_i$'s; the $i,j$ entry of $\Omega$ is the geometric intersection number of $\gamma_i$ and $\gamma_j$. Let $D_i$, $i=1, \dots, n+2$ be an $(n+2) \times (n+2)$ matrix such that the $i,i$ entry equals $1$ and all other entries equal zero. Also, let $Q_i, 1\le i \le n+3$ be an $(n+2)\times (n+2)$ matrix defined by \[ Q_i = I_{n+3} + D_i \Omega,\] where $I_{n+3}$ is an $(n+2)\times (n+2)$ identity matrix. The stretch factor of $T_\sigma$ is the spectral radius of the product $Q_{\sigma, n} =Q_{\sigma(1)} Q_{\sigma(2)} \cdots Q_{\sigma(n+2)}$.

For example, suppose $\sigma=Id$. Then for each $n\ge 8$,
\[ Q_{Id,n} \ =\ \begin{bmatrix} 2&2&1&2&2&\cdots&&2&1\\1&2&1&2&2&\cdots&&2&1\\0&1&2&2&2&\cdots&&2&1\\0&1&1&2&2&\cdots&&2&1\\0&0&0&1&2&\cdots&&2&1\\ & & & & \ddots& && \vdots&\vdots\\& & & & & \ddots&& \vdots&\vdots\\ &&&&&&1&2&1\\&&&&&&&1&1\end{bmatrix}. \]
Since this family of matrices exhibits a consistent pattern in its entries, we can obtain a formula for the characteristic polynomial. Let us only state the answer since this has been done using the cofactor expansion. The characteristic polynomial $ \chi_n(t)$ of $Q_{Id, n}$ is given by 
 \begin{equation}\label{E:charp} \chi_n(t) = t b_{n+3}+(t^2-3 t+2) b_{n+2} + (2t-1) b_{n+1},\end{equation}
 where \[b_n=-(t-1)b_{n-1}-t b_{n-2}, \qquad b_1=1-t,\qquad \text{and } \qquad b_2=1-3 t+t^2.\] The sequence $\{ b_n\}$ of polynomials is a sequence of determinants of matrices that repeatedly appeared when computing the determinant of $Q_{Id, n} - t Id$.  Suppose $B_n$ is an $n \times n$ matrix defined by 
\[ B_n\ = \begin{bmatrix} 2-t & 2&2&2&\cdots&2&1\\ 1&2-t&2&2&\cdots&2&1\\0&1&2-t&2&\cdots&2&1\\ 0&0& 1&2-t&\cdots&2&1\\ \vdots &&&\ddots& &\vdots&\vdots \\&&&&1&2-t&1\\ &&&&&1&1-t \end{bmatrix}, \]
then $b_n =\det(B_n)$ is the determinant of $B_n$.
It follows that the stretch factor of $T_{Id}$ is the largest real root of $\chi_n(t)$.

\begin{rem} Based on computer experiments, we noticed that for $n \le 15$, for all permutations $\sigma \in \Sigma_{n+2}$, the stretch factor of $T_{\sigma}$ equals to the stretch factor of $T_{Id}$.

\vspace{1ex}
\begin{center}
\begin{tabular}{|c|c|||c|c|}
\hline
$n$& the stretch factor of $T_\sigma$, $\sigma \in \Sigma_{n+2}$&$n$& the stretch factor of $T_{Id}$\\
\hline
$8$ & $5.70407$&$16$ & $5.79333$\\
\hline
$9$&$5.72752$&$17$ & $5.79712$\\
\hline
$10$&$5.74492$&$18$ & $5.80032$\\
\hline
$11$&$5.75853$&$19$ & $5.80305$\\
\hline
$12$&$5.76853$&$20$ & $5.80541$\\
\hline
$13$&$5.77657$&$21$ & $5.80745$\\
\hline
$14$&$5.78339$&$22$ & $5.80923$\\
\hline
$15$&$5.78882$&$23$ & $5.8108$\\
\hline
\end{tabular}
\end{center}

\vspace{1ex}
With the polynomials $\chi_n(t)$ defined in (\ref{E:charp}), finding the stretch factor of $T_{Id}$ is straightforward to implement in a computer program.
While we do not yet have conclusive proof, computer experiments suggest that the stretch factor does not depend on the permutation $\sigma \in \Sigma_{n+2}$, and the sequence of the stretch factor of $T_{Id}$ is monotonly increasing in $n$. 
\end{rem}

\section{Proof of Main Theorem}\label{S:thmA}
Let $S=S_g$ be a closed surface of genus $g\ge 1$ without boundary, and let $\text{Mod}(S)$ be the mapping class group, the group of isotopy classes of orientation-preserving homeomorphisms of $S$. Also, let $\text{Mod}^\pm(S)$ be the group of isotopy classes of all (orientation-preserving and orientation-reversing) homeomorphism. Then, $\text{Mod}(S)$ is an index $2$ subgroup of $\text{Mod}^\pm(S)$. By the Dehn-Nielsen-Bare theorem, there is an isomorphism between $\text{Mod}^\pm(S)$ and the outer automorphism group, $\text{Out}(\pi_1(S))$. 

The surface $\hat X_n$ is an orientable surface with one or two punctures. There is a version of the Dehn-Nielson-Bare theorem for the punctured surfaces. Suppose $\text{Out}^\star(\pi_1(S))$ is the subgroup of the outer automorphism group of $\pi_1(S)$ consisting of elements that preserve the set of conjugacy classes of the simple closed curves surrounding individual punctures. 

\begin{thm}\cite[Theorem~8.8]{Farb-Margalit:2012} Let $S=S_{g,n}$ be a hyperbolic surface of genus $g$ with $n$ punctures. Then the natural map \[ \text{Mod}^\pm(S) \ \to \text{Out}^\star(\pi_1(S))\] is an isomorphism. 
\end{thm}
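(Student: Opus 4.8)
\emph{Proof proposal.} This is the Dehn--Nielsen--Baer theorem in the punctured setting, and I would prove it by the circle-at-infinity method, showing in turn that the natural map $\Psi\colon \mathrm{Mod}^\pm(S)\to\mathrm{Out}^\star(\pi_1(S))$ is well defined, injective, and surjective. It is \emph{well defined}: a self-homeomorphism $h$ of $S$ induces, after a choice of arc from $h(*)$ to $*$, an automorphism of $\pi_1(S,*)$ canonical up to inner automorphisms, so $\mathrm{Homeo}^\pm(S)\to\mathrm{Out}(\pi_1(S))$ is a homomorphism; it kills isotopies since isotopic maps are homotopic; and since $h$ permutes the punctures it carries the set of unoriented peripheral conjugacy classes to itself, so its image lies in $\mathrm{Out}^\star(\pi_1(S))$. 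It is \emph{injective}: if $[h]\in\mathrm{Mod}^\pm(S)$ induces the trivial outer automorphism then, $S$ being aspherical (a surface with $\chi(S)<0$ is a $K(\pi_1(S),1)$), $h$ is freely homotopic to $\mathrm{id}_S$ through maps of $S$; the classical fact that homotopic homeomorphisms of a surface of negative Euler characteristic are isotopic (the Alexander method, cf.\ \cite{Farb-Margalit:2012}) then gives $[h]=1$.

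For \emph{surjectivity}, fix a complete finite-area hyperbolic metric on $S$, so $S=\mathbb{H}^2/\Gamma$ with $\Gamma=\pi_1(S)$ a discrete subgroup of $\mathrm{PSL}_2(\mathbb{R})$ whose maximal parabolic subgroups are exactly the conjugates of the cyclic groups $\langle c_i\rangle$ generated by the peripheral elements; let $P\subset S^1=\partial\mathbb{H}^2$ be the set of parabolic fixed points. Given $\Phi\in\mathrm{Out}^\star(\pi_1(S))$, lift it to an automorphism $\phi\colon\Gamma\to\Gamma$. Since $\phi$ permutes the classes $[c_i^{\pm1}]$, it sends parabolics to parabolics and maximal parabolic subgroups to maximal parabolic subgroups, and because each point of $P$ is the fixed point of a unique maximal parabolic subgroup, $\phi$ determines a bijection $\partial_0\phi\colon P\to P$ with $\partial_0\phi(\gamma\cdot p)=\phi(\gamma)\cdot\partial_0\phi(p)$ for all $\gamma\in\Gamma$. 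The crucial step is to extend $\partial_0\phi$ to a homeomorphism $\partial\phi\colon S^1\to S^1$ with $\partial\phi\circ\gamma=\phi(\gamma)\circ\partial\phi$. Granting this, pick a $\Gamma$-invariant locally finite ideal triangulation $\widetilde T$ of $\mathbb{H}^2$ with vertex set $P$ (the lift of an ideal triangulation of $S$); as $\partial\phi$ is an orientation-preserving self-map of $S^1$ carrying $P$ bijectively to $P$, the ideal triangles spanned by the $\partial\phi$-images of the vertices of $\widetilde T$ form another locally finite ideal triangulation $\widetilde T'$, invariant under the $\phi$-twisted action, and straightening --- sending each ideal triangle of $\widetilde T$ to the corresponding one of $\widetilde T'$ by the canonical barycentric homeomorphism (say in the Klein model) --- yields a homeomorphism $\widetilde\Phi\colon\mathbb{H}^2\to\mathbb{H}^2$ with $\widetilde\Phi\circ\gamma=\phi(\gamma)\circ\widetilde\Phi$ whose continuous boundary extension is $\partial\phi$. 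Then $\widetilde\Phi$ descends to a homeomorphism $\bar\Phi$ of $S=\mathbb{H}^2/\Gamma$; since $\widetilde\Phi$ extends over $S^1$ and carries $P$ to $P$ compatibly with $\Gamma$, $\bar\Phi$ extends over the punctures (to a homeomorphism of the compactified closed surface permuting the filled-in points), hence is a genuine self-homeomorphism of the finite-type surface $S$, and unwinding the definition of the induced outer automorphism via the deck transformations shows $\Psi([\bar\Phi])=\Phi$.

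The main obstacle is the boundary extension of $\partial_0\phi$. For $S$ \emph{closed} there are no parabolics: $\Gamma$ is then quasi-isometric to $\mathbb{H}^2$ by the \v{S}varc--Milnor lemma, so $\phi$ is a quasi-isometry and extends to $\partial\mathbb{H}^2=S^1$ automatically, and the rest of the argument runs verbatim with an ordinary $\Gamma$-invariant triangulation replacing $\widetilde T$; this recovers the classical Dehn--Nielsen--Baer theorem. For $S$ with punctures $\Gamma$ is free and is \emph{not} quasi-isometric to $\mathbb{H}^2$, so $\phi$ does not extend naively --- this is precisely where the $\star$-hypothesis is indispensable. I would obtain the extension either (a) by proving that $\partial_0\phi$ preserves the cyclic order on $P$, equivalently that $\phi$ respects the ``linking pattern'' of peripheral subgroups, which is encoded by the Farey-type incidence combinatorics of the ideal triangulation and is detectable purely from $\Gamma$ together with its peripheral structure, so that $\partial_0\phi$ extends monotonically over the dense set $P\subset S^1$; or (b) by invoking the rigidity of geometrically finite convergence actions (Tukia) and the quasi-isometry invariance of the Bowditch boundary of a relatively hyperbolic group, applied to the convergence action of $\Gamma$ on $S^1$ with parabolic subgroups the $\langle c_i\rangle$. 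An alternative, more elementary route to surjectivity altogether is to fill in the punctures: $\phi$ descends to $\bar\phi\in\mathrm{Out}(\pi_1(S_g))$, which the closed case realizes by a homeomorphism $\bar h$ of $S_g$; one then isotopes $\bar h$ so that it permutes the filled-in marked points, restricts to $S$, and corrects by a point-pushing homeomorphism (an element of the kernel of $\mathrm{Mod}^\pm(S)\to\mathrm{Mod}^\pm(S_g)$) to match $\Phi$ exactly, the remaining work being the identification of the algebraic and topological Birman kernels, the exactness of the algebraic Birman sequence, and a five-lemma argument. I would run (a)/(b) as the main line and keep the Birman reduction as a cross-check.
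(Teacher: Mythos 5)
This statement is background quoted from Farb--Margalit \cite[Theorem~8.8]{Farb-Margalit:2012}; the paper gives no proof of its own, so your proposal can only be measured against the standard argument in the cited source. Your outline does follow that standard Dehn--Nielsen--Baer line: well-definedness and injectivity are handled correctly (asphericity of a hyperbolic surface plus ``homotopic homeomorphisms are isotopic''), and for surjectivity you correctly locate both where the $\star$-hypothesis enters and why the closed-case quasi-isometry argument fails for punctured $S$ (here $\pi_1(S)$ is free and not quasi-isometric to $\mathbb{H}^2$). The straightening of an invariant ideal triangulation once a boundary homeomorphism is in hand, and the descent/extension over the punctures, are also standard and fine at this level of detail.

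The gap is that the heart of the punctured case --- extending the equivariant bijection $\partial_0\phi$ of the parabolic fixed-point set $P$ to a homeomorphism of $S^1$, equivalently proving that $\partial_0\phi$ preserves or reverses the cyclic order on the dense subset $P\subset S^1$ --- is precisely the step you leave as a plan (``(a) or (b)''). Option (a) needs an actual algebraic characterization of the cyclic ordering of the peripheral fixed points in terms of $\Gamma$ and its peripheral structure; asserting that it is ``detectable from the Farey-type combinatorics'' is naming the difficulty, not resolving it. Option (b) is a citation to Tukia/Bowditch convergence-group rigidity that would still need the relevant hypotheses verified. So as written the surjectivity argument is an outline, not a proof. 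Two smaller points: since the theorem concerns $\mathrm{Mod}^\pm(S)$, the induced boundary map may reverse orientation, so the triangulation-straightening step must be stated for order-reversing $\partial\phi$ as well; and the Birman-sequence ``cross-check'' does not work uniformly, because for hyperbolic $S_{g,n}$ with $g\le 1$ (e.g.\ $S_{0,4}$ or $S_{1,1}$) the filled-in closed surface is $S^2$ or $T^2$, where the closed Dehn--Nielsen--Baer statement you would invoke is unavailable or degenerate, so that route cannot even serve as a backup in those cases.
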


Thus, if $f$ and $g$ are two orientation-preserving homeomorphisms on $S$ with the identical induced action on $\pi(S)$, we can conclude that $f$ and $g$ are isotopic to each other. It follows that we have:

\begin{thm}\label{T:iso}
Suppose $n\ge 8$. Then two homeomorphisms $\hat F_n$ (defined in Section \ref{S:rat}) and $T_n$ (defined in (\ref{E:twist}) in Section \ref{S:twists}) on $\hat X_n$ are isotopic. 
\end{thm}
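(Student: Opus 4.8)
The plan is to read the theorem off from the two group-theoretic computations already in hand, Proposition~\ref{P:faction} and Proposition~\ref{P:taction}, combined with the quoted Dehn--Nielsen--Baer theorem for punctured surfaces (\cite[Theorem~8.8]{Farb-Margalit:2012}). The crucial observation is that these two propositions exhibit \emph{identical} formulas: $\hat F_{n*}$ and $T_{n*}$ are one and the same automorphism of the free group $\pi_1(\hat X_n)$ on the generators $a_1,\dots,a_n,b_1,c_1$. This is not a coincidence of notation --- Section~\ref{S:twists} was deliberately arranged to use the very same basepoint, free generating set, reading curves, and barrier curves $M_t,M_x,M_y$ as Section~\ref{S:rat}, precisely so that the two computations are carried out in the same chart and may be compared term by term.

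First I would check that $\hat X_n$ meets the hypothesis of the quoted theorem. By Section~\ref{SS:cut}, $\hat X_n$ is an orientable surface of genus $g=\lfloor(n+2)/2\rfloor$ with one puncture (if $n$ is even) or two punctures (if $n$ is odd); for $n\ge 8$ one has $\chi(\hat X_n)=-1-n<0$, so $\hat X_n$ is hyperbolic. Both maps in question are genuine self-homeomorphisms of $\hat X_n$: the map $\hat F_n$ is the restriction of the automorphism $F_n$ to the $F_n$-invariant open subset $X_n\setminus C_n$, and $T_n$ is a composition of the Dehn twists about the curves $\gamma_i$, all of which are disjoint from $C_n$. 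Hence each defines a class in $\mathrm{Mod}^\pm(\hat X_n)$, and under the isomorphism $\mathrm{Mod}^\pm(\hat X_n)\to\mathrm{Out}^\star(\pi_1(\hat X_n))$ these classes map to the outer classes of $\hat F_{n*}$ and $T_{n*}$ respectively. Since $\hat F_{n*}=T_{n*}$ as automorphisms, their images in $\mathrm{Out}^\star(\pi_1(\hat X_n))$ coincide, and injectivity of the Dehn--Nielsen--Baer map forces $[\hat F_n]=[T_n]$ in $\mathrm{Mod}^\pm(\hat X_n)$; that is, $\hat F_n$ and $T_n$ are isotopic.

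The step that calls for genuine care --- more bookkeeping than obstacle --- is the one implicit above: confirming that the $\pi_1$ actions in Propositions~\ref{P:faction} and~\ref{P:taction} are computed with respect to \emph{exactly} the same marking (the same labelling and cyclic order of the cross-caps along the horizontal curve of Figure~\ref{F:horizontal}, the same puncture labels, the same sign convention for crossing a reading curve), so that equality of the two displayed formulas really does imply equality of the induced \emph{outer} automorphisms, and not merely equality up to an unrecorded change of basis or relabelling of punctures. Once this is granted nothing further is needed. Note that the conclusion does not depend on tracking orientations, since the quoted theorem is stated for $\mathrm{Mod}^\pm$; still, orientation-preservation of $\hat F_n$ follows from the existence of the $2$-form $\omega$ with $F_n^*\omega=\lambda_n\omega$ and $\lambda_n>0$ (Theorem~\ref{T:rat}(5)), and that of $T_n$ from its being a product of Dehn twists. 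The theorem is then the formal implication $\hat F_{n*}=T_{n*}\ \Longrightarrow\ \hat F_n \text{ and } T_n \text{ are isotopic}$.
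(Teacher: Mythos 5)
Your proposal is correct and follows essentially the same route as the paper: compare the induced $\pi_1$-actions from Propositions~\ref{P:faction} and~\ref{P:taction}, note they coincide, and invoke the Dehn--Nielsen--Baer isomorphism for the hyperbolic punctured surface $\hat X_n$. The paper's own proof is terser (it cites the $2$-form $\omega$ to note orientation-preservation and then simply compares the two propositions), but the substance is identical; your added care about the common marking and basepoint, and your observation that orientation-tracking is not strictly needed since the quoted theorem concerns $\mathrm{Mod}^\pm$, are sensible refinements rather than a different argument.
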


\begin{proof}
First recall Theorem \ref{T:rat}, which states that there is a meromorphic two form $\omega$ on $X_n$ such that (1) $\omega$ has a simple pole along the invariant cubic and no other poles or zeros, and (2) $F_n$ satisfies  $F_n^* \omega = \lambda_n \omega$ with $\lambda_n>1$. It follows that $\hat F_n$ on $\hat X_n$ is an orientation preserving diffeomorphism. For both $F_n$ and $T_n$, the induced actions on $\pi_1(\hat X_n)$ were computed in Proposition \ref{P:faction} and Proposition \ref{P:taction}. Comparing Proposition \ref{P:faction} and Proposition \ref{P:taction}, we see that their induced actions are identical. 
\end{proof}

Let us recall the remarkable Nielsen-Thurston classification on an orientable surface with punctures. Suppose $S_{g,n}$ is an orientable surface of genus $g$ with $n$ punctures. 

\begin{thm}\cite[Theorem~13.2]{Farb-Margalit:2012} Let $g, n\ge 0$. For mapping class $f \in \text{Mod}(S_{g,n})$ is periodic, reducible, or pseudo-Anosov. 
\end{thm}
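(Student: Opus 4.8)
The plan is to run Thurston's original argument: arrange for $f$ to act on a compactification of Teichm\"uller space that is a closed ball, extract a fixed point from Brouwer's theorem, and read off which of the three types $f$ is from the location of that fixed point and the way $f$ behaves near it. First I would dispose of the low-complexity surfaces for which this machine does not apply: the sphere with at most three punctures, where $\mathrm{Mod}(S)$ is finite and every class is therefore periodic; and the closed torus $S_{1,0}$, where $\mathrm{Mod}(S)\cong SL_2(\mathbb{Z})$ and the classical classification of elements into elliptic, parabolic and hyperbolic matches the periodic, reducible (a power of a Dehn twist), and pseudo-Anosov cases, the last via the Anosov diffeomorphisms whose invariant foliations are the irrational eigendirections. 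For every other $S=S_{g,n}$ the surface is hyperbolic and $\dim\mathcal{T}(S)=6g-6+2n\ge 1$, and from now on I work there.

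\emph{Step 1 (compactification).} I would import as known that $\mathcal{T}(S)$ is an open ball of dimension $6g-6+2n$, and that the marked length/intersection embedding $X\mapsto\bigl(\gamma\mapsto \ell_X(\gamma)\bigr)_{\gamma\in\mathcal{S}}$ of $\mathcal{T}(S)$ into the projectivization $\mathbb{P}(\mathbb{R}_{\ge 0}^{\mathcal{S}})$, with $\mathcal{S}$ the set of isotopy classes of essential simple closed curves, has closure $\overline{\mathcal{T}(S)}=\mathcal{T}(S)\sqcup\mathcal{PMF}(S)$ homeomorphic to a closed ball, with boundary sphere $\partial\overline{\mathcal{T}(S)}=\mathcal{PMF}(S)$ the space of projective measured foliations, and that $\mathrm{Mod}(S)$ acts on $\overline{\mathcal{T}(S)}$ by homeomorphisms. \emph{Step 2 (fixed point).} Since $f$ then induces a self-homeomorphism of a closed ball, Brouwer gives a point $x\in\overline{\mathcal{T}(S)}$ with $f(x)=x$. \emph{Step 3 (interior case).} If $x\in\mathcal{T}(S)$, then $f$ preserves a hyperbolic structure up to isotopy, hence is represented by an isometry of a finite-area hyperbolic surface; such isometry groups are finite, so $f$ is periodic.

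\emph{Step 4 (boundary case).} Otherwise $f$ fixes some $[\mathcal{F}]\in\mathcal{PMF}(S)$, so $f_*\mathcal{F}=\lambda\mathcal{F}$ for some $\lambda>0$. If $\mathcal{F}$ is not arational, there is an essential simple closed curve with $i(\mathcal{F},\gamma)=0$, and the non-simply-connected leaves of $\mathcal{F}$ together with the complementary non-disk subsurfaces determine, after a canonical surgery, a non-empty essential multicurve $M(\mathcal{F})$; since $f_*\mathcal{F}=\lambda\mathcal{F}$ merely rescales $\mathcal{F}$, it preserves $M(\mathcal{F})$ up to isotopy, so $f$ is reducible. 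If $\mathcal{F}$ is arational, I split on $\lambda$. When $\lambda=1$, $f$ preserves the actual measured foliation $\mathcal{F}$; its singular spine is then a finite $f$-invariant graph carrying all of the surface's topology, so $f$ maps to a finite permutation group and its kernel is isotopically trivial, forcing $f$ periodic. When $\lambda\ne 1$, replace $f$ by $f^{-1}$ so that $\lambda>1$; the attracting behaviour of $f$ at $[\mathcal{F}]$ on $\mathcal{PMF}(S)$ forces $f^{-1}$ to fix another projective class $[\mathcal{G}]$, necessarily arational, and $i(\mathcal{F},\mathcal{G})>0$ together with $\lambda\,\mu\,i(\mathcal{F},\mathcal{G})=i(\mathcal{F},\mathcal{G})$ shows its multiplier is $\mu=\lambda^{-1}$; then $\mathcal{F}$ and $\mathcal{G}$ are transverse and jointly fill $S$, and building a Markov partition adapted to the pair $(\mathcal{G},\mathcal{F})$ upgrades them to invariant singular measured foliations with local models $(x,y)\mapsto(\lambda x,\lambda^{-1}y)$ and finitely many prong singularities, i.e. $f$ is pseudo-Anosov with stretch factor $\lambda$. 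For completeness one records that pseudo-Anosov excludes the other two types: such $f$ has infinite order, and for any essential $\alpha,\beta$ crossing the foliations $i(f^k\alpha,\beta)$ grows like $\lambda^k$, which is impossible when $\alpha$ lies in a finite $f$-orbit, so $f$ is not reducible.

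The hard part will be Step 4 in the case $\lambda>1$: producing the second, transverse foliation and, above all, promoting the topological transverse pair $(\mathcal{F},\mathcal{G})$ to genuine invariant measured foliations with the required affine $(\lambda,\lambda^{-1})$ structure and only finitely many prongs — this is the analytic heart of Thurston's theorem (essentially the content of several chapters of Fathi--Laudenbach--Po\'enaru). Constructing the Thurston compactification in Step 1 and verifying continuity of the extended action is the second most demanding ingredient; the interior case, the non-arational reduction, and the $\lambda=1$ rigidity are comparatively soft by contrast.
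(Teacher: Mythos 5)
The paper itself offers no proof of this statement: it is imported verbatim from Farb--Margalit, so the only comparison available is with the proof in that reference. There, Theorem~13.2 is proved by Bers' method: one studies the translation distance $\tau(f)=\inf_{X\in\mathcal{T}(S)}d_{\mathcal{T}}(X,f\cdot X)$ for the Teichm\"uller metric and splits into cases according to whether the infimum is realized and whether it is zero, with Teichm\"uller's existence and uniqueness theorems for extremal quasiconformal maps producing the pseudo-Anosov representative when a positive infimum is attained. Your sketch instead follows Thurston's original route through the compactification $\overline{\mathcal{T}(S)}=\mathcal{T}(S)\sqcup\mathcal{PMF}(S)$ and Brouwer's fixed point theorem, i.e.\ the Fathi--Laudenbach--Po\'enaru argument. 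Both are legitimate; Bers' proof trades the FLP machinery (that $\mathcal{PMF}(S)$ is a sphere compactifying the Teichm\"uller ball, continuity of the extended action, the Markov-partition upgrade to genuine invariant foliations) for Teichm\"uller theory, and your choice is fine provided you cite or reprove that machinery, which you correctly identify as the analytic heart.

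Two steps of your Step~4 are genuinely weaker than the rest. First, in the arational case with $\lambda=1$, the claim that ``the singular spine is a finite $f$-invariant graph carrying all of the surface's topology'' is not correct: an arational foliation typically has no saddle connections and its singular leaves are dense, so no such finite invariant spine exists. The standard treatment shows instead that the stabilizer in $\mathrm{Mod}(S)$ of the projective class of an arational foliation maps to $\mathbb{R}$ by the logarithm of the scaling factor with finite kernel (FLP, Expos\'e~12), and then that a finite-order mapping class is periodic; you need an argument of that kind here. Second, for $\lambda>1$ the existence of the second fixed class $[\mathcal{G}]$ is not a formal consequence of ``attracting behaviour'': you should apply Brouwer to $f^{-1}$ and then argue separately that the resulting fixed point is neither interior nor equal to $[\mathcal{F}]$, and that $i(\mathcal{F},\mathcal{G})>0$ (this uses arationality of $\mathcal{F}$), before the multiplier computation $\lambda\mu\,i(\mathcal{F},\mathcal{G})=i(\mathcal{F},\mathcal{G})$ gives $\mu=\lambda^{-1}$. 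With those repairs, and with the FLP core supplied, your outline is the standard Thurston proof, merely a different (and historically prior) route from the Bers-style proof in the reference the paper cites.
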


\begin{prop}\label{P:pseudo}
Let $g=\lfloor (n+2)/2 \rfloor$.  
There exists a punctured surface
\[
  \Sigma_n \;\cong\; S_g\smallsetminus\{\text{at least one point}\}
\]
such that $\widehat F_n$ restricts to a pseudo-Anosov homeomorphism on $\Sigma_n$ whose stretch factor is
$\lambda_n$, the largest real root of the polynomial
$p_n(t)=t^{n}(t^3-t-1)+t^3+t^2-1$.
\end{prop}

\begin{proof}
Because $F_n$ possesses a unique invariant simple closed curve
$C_n\subset X_n$ and
$\widehat X_n=X_n\setminus C_n$, the homeomorphism
$\widehat F_n$ has no essential invariant curves.  In particular, by Proposition \ref{P:irreducible} we see that every
essential simple closed curve in $\widehat X_n$ has
infinite order.

It follows that $\hat F_n$ is not reducible. If the induced action $\hat F_{n*}$ on $\pi_1(\hat X_n)$ is periodic, then so is the induced action of $F_{n*}$ on $X_n$. However, this is not true due to Theorem \ref{T:rat}. By Nielsen-Thurston classification, we conclude that $\hat F_n$ on $\hat X_n$ is pseudo-Anosov. 

By Theorem \ref{T:rr}, the growth rate of the induced action of $\hat F_n$ on homology is given by $\lambda_n$. Hence, for any $g$ in the mapping class of $\hat F_n$, the topological entropy satisfies \[ h_{top}(g) \ge \log \lambda_n.\]
Let $\phi$ be a pseudo-Anosov diffeomorphism in the mapping class of $\hat F_n$. Then $\phi$ realizes the minimal entropy in the class, which is equal to $\log \delta$, where $\delta$ is the stretch factor of $\phi$.

By Proposition \ref{P:fgrow}, we know that $F_n$ realizes this minimal entropy, so $\log \delta = \log \lambda_n$. 
Hence $\delta = \lambda_n$ and we conclude that the stretch factor of $\hat F_n$ is equal to $\lambda_n$.
\end{proof}

%
%

We say a pseudo-Anosov map is \textit{coronal} if its stretch factor has a Galois conjugate on the unit circle. Shin and Strenner \cite{Shin-Strenner:2015} shows that there is an obstacle for a pseudo-Anosov map to arise from Penner's construction. 

\begin{prop}\cite[Corollary~2.5]{Shin-Strenner:2015} A coronal pseudo-Anosov mapping class has no power of them coming from Penner's construction.
\end{prop}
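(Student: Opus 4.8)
\emph{The plan} is to obtain this from the main technical theorem of Shin--Strenner \cite{Shin-Strenner:2015}, namely: \emph{if a pseudo-Anosov mapping class $\phi$ arises from Penner's construction, then $\phi$ is not coronal}, i.e.\ the stretch factor $\lambda(\phi)>1$ has no Galois conjugate on the unit circle. Granting that result, the corollary reduces to the observation that coronality is inherited by powers. Recall that for a pseudo-Anosov $\phi$ every power $\phi^{k}$ ($k\ge1$) is again pseudo-Anosov with $\lambda(\phi^{k})=\lambda(\phi)^{k}$, so the assertion ``$\phi^{k}$ arises from Penner's construction'' is meaningful; and since $\lambda(\phi^{-1})=\lambda(\phi)$ it suffices to treat $k\ge1$.

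\emph{The reduction.} Suppose $\phi$ is coronal and $\phi^{k}$ arises from Penner's construction for some $k\ge1$; we derive a contradiction. By coronality there is a Galois conjugate $\mu$ of $\lambda(\phi)$ with $|\mu|=1$, say $\mu=\sigma(\lambda(\phi))$ for some field embedding $\sigma\colon\mathbb{Q}(\lambda(\phi))\hookrightarrow\mathbb{C}$ fixing $\mathbb{Q}$. Then
\[
\mu^{k}\ =\ \sigma\big(\lambda(\phi)\big)^{k}\ =\ \sigma\big(\lambda(\phi)^{k}\big)\ =\ \sigma\big(\lambda(\phi^{k})\big),
\]
so $\mu^{k}$ is a root of the minimal polynomial of $\lambda(\phi^{k})$ over $\mathbb{Q}$, i.e.\ a Galois conjugate of the stretch factor of $\phi^{k}$. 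Since $|\mu^{k}|=1$ while $\lambda(\phi^{k})=\lambda(\phi)^{k}>1$, this conjugate is distinct from $\lambda(\phi^{k})$ and lies on the unit circle; hence $\phi^{k}$ is coronal. This contradicts the main theorem applied to the pseudo-Anosov map $\phi^{k}$, and the corollary follows. The only things to be careful about here are the standard facts that a positive power of a pseudo-Anosov is pseudo-Anosov with multiplicative stretch factor, and that the conjugate $\mu$ we use is automatically nontrivial (it cannot equal $\lambda(\phi)$, nor $\mu^{k}$ equal $\lambda(\phi^{k})$, for modulus reasons).

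\emph{On the main theorem --- the real obstacle.} It remains to indicate why a Penner-type pseudo-Anosov is not coronal, which is the substantive step. If $\phi$ is built from a filling pair $A,B$ of multicurves as in Theorem~\ref{T:pen}, then $A\cup B$ fills the surface and, after resolving its intersections, carries an invariant (bipartite) train track; the action of $\phi$ on the weight space is a non-negative integral matrix $M$ that is a product of the shear matrices $Q_{i}=I+D_{i}\Omega$ of Section~\ref{pen}, where $\Omega$ is the symmetric, non-negative, zero-diagonal geometric intersection matrix of the $\gamma_{i}$'s. Because every twist occurs and $A\cup B$ is filling, $M$ is primitive, so by Perron--Frobenius $\lambda(\phi)$ is a simple eigenvalue of $M$ of maximal modulus. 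The remaining eigenvalues are then constrained by the rank-one shear structure of the factors together with the self-adjointness relation $\Omega Q_{i}=Q_{i}^{T}\Omega$ (and the fact that $\phi^{-1}$ is itself of Penner type, with the roles of $A$ and $B$ exchanged, forcing a dual non-negativity for $M^{-1}$); Shin--Strenner's eigenvalue analysis \cite[\S2]{Shin-Strenner:2015} concludes that the characteristic polynomial of $M$ has no root on the unit circle. I expect this to be the hard part: it is exactly where the rigidity of Penner's construction enters, and it cannot be weakened to ``non-negative'' or even ``bi-Perron'', since Salem numbers --- for instance $\lambda_{8}$, Lehmer's number --- are bi-Perron units that are nonetheless coronal, which is precisely why the maps $\hat F_{n}$ of the Main Theorem fall outside Penner's construction.
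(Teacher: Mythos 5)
This proposition is imported verbatim from \cite[Corollary~2.5]{Shin-Strenner:2015}; the paper gives no proof of it, and your reduction---coronality passes to powers, since a unit-circle Galois conjugate $\mu$ of $\lambda(\phi)$ gives the unit-circle conjugate $\mu^{k}$ of $\lambda(\phi^{k})=\lambda(\phi)^{k}$, so a Penner power of a coronal class would contradict Shin--Strenner's main theorem that Penner stretch factors have no Galois conjugates on the unit circle---is exactly how the corollary is derived in that source. Your closing paragraph only sketches the main theorem itself (and asserts a few points, e.g.\ primitivity of the product matrix and the absence of unit-circle roots of its characteristic polynomial, more confidently than you verify), but since you explicitly defer that substantive step to \cite{Shin-Strenner:2015}, just as the paper does by citing it, the argument stands.
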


\begin{thm}\label{T:nopen} The homeomorphism $\widetilde F_n$ does not arise from Penner's construction. 
\end{thm}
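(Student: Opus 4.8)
The plan is to invoke the obstruction from Shin--Strenner together with the arithmetic of the stretch factor. Recall that \cite[Corollary~2.5]{Shin-Strenner:2015} says a \emph{coronal} pseudo-Anosov mapping class (one whose stretch factor has a Galois conjugate on the unit circle) has no power arising from Penner's construction. So it suffices to show that $\hat F_n$ is coronal, i.e.\ that $\lambda_n$ has a Galois conjugate on the unit circle. This is where the earlier structural results do all the work: by Theorem~\ref{T:rr}(1), the polynomial $\chi_n(t) = t^{n-2}(t^3-t-1)+t^3+t^2-1$ has a Salem factor whose largest root is $\lambda_n$, and $\lambda_n$ is a Salem number for every $n\ge 8$. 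By definition, a Salem number of degree $2d \ge 4$ has $2d-2 \ge 2$ Galois conjugates on the unit circle, so $\lambda_n$ is coronal provided its minimal polynomial has degree at least $4$ (equivalently, $\lambda_n$ is not quadratic). For $n=8$, $\lambda_8$ is Lehmer's number with minimal polynomial of degree $10$; in general the Salem factor of $\chi_n$ has degree growing with $n$ (never quadratic), so $\lambda_n$ is genuinely a Salem number with conjugates on the unit circle.

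The steps, in order, are: (i) cite Proposition~\ref{P:pseudo} to record that $\hat F_n$ is pseudo-Anosov with stretch factor $\lambda_n$; (ii) cite Theorem~\ref{T:rr}(1)--(2) to record that $\lambda_n$ is a Salem number and, for completeness, note its minimal polynomial is not quadratic so at least one Galois conjugate lies on the unit circle; (iii) conclude that $\hat F_n$ is coronal; (iv) apply \cite[Corollary~2.5]{Shin-Strenner:2015} to conclude that no power of $\hat F_n$ comes from Penner's construction, so in particular $\hat F_n$ itself does not. One small point worth spelling out is why ``Salem number'' forces a conjugate strictly \emph{on} the unit circle rather than merely not inside it: for a Salem number $\lambda$ of degree $2d$, the minimal polynomial is reciprocal, its roots come in pairs $\{\mu,\mu^{-1}\}$, and exactly the pair $\{\lambda,\lambda^{-1}\}$ lies off the unit circle, leaving $2d-2$ roots on it; this is nonempty as soon as $d\ge 2$.

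The only place any care is needed is ruling out the degenerate possibility that $\lambda_n$ is a quadratic Salem number (which, strictly, would be a reciprocal quadratic unit with no conjugate on the unit circle, hence not coronal). But this cannot happen here: a quadratic algebraic integer $>1$ that is reciprocal has the form $\lambda + \lambda^{-1} \in \mathbb{Z}$ with $\lambda$ a real quadratic unit, and such numbers are not usually called Salem numbers precisely because they fail the ``conjugates on the unit circle'' condition. Since Theorem~\ref{T:rr} asserts $\chi_n$ has a genuine Salem factor and, for $n=8$, identifies $\lambda_8$ with Lehmer's number of degree $10$, while for larger $n$ the relevant Salem factors have degree tending to infinity, the degenerate case never arises. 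Thus the main (and essentially only) obstacle is bookkeeping: making sure the chain ``$\lambda_n$ Salem $\Rightarrow$ $\lambda_n$ coronal $\Rightarrow$ $\hat F_n$ coronal $\Rightarrow$ not from Penner'' is stated cleanly, all of whose links are either definitional or already established in the excerpt. I would also remark, as a sanity check consistent with the rest of the paper, that this is compatible with the Liechti--Strenner theorem quoted in the introduction: that result concerns stretch factors of pseudo-Anosov maps on \emph{nonorientable} surfaces (or orientation-reversing maps on orientable ones), whereas $\hat F_n$ is an orientation-preserving map on the orientable cut surface $\hat X_n$, so there is no contradiction in $\lambda_n$ having conjugates on the unit circle.

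\begin{proof}
By Proposition~\ref{P:pseudo}, $\hat F_n$ is pseudo-Anosov with stretch factor $\lambda_n$. By Theorem~\ref{T:rr}, $\lambda_n$ is a Salem number whose minimal polynomial has degree at least $4$ (for $n=8$ it is the degree $10$ Lehmer polynomial, and the degrees of the Salem factors of $\chi_n$ grow with $n$). Writing $2d\ge 4$ for this degree, the minimal polynomial of $\lambda_n$ is reciprocal with roots occurring in pairs $\{\mu,\mu^{-1}\}$; exactly the pair $\{\lambda_n,\lambda_n^{-1}\}$ lies off the unit circle, so the remaining $2d-2\ge 2$ roots lie on the unit circle. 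Hence $\lambda_n$ has a Galois conjugate on the unit circle, so $\hat F_n$ is coronal. By \cite[Corollary~2.5]{Shin-Strenner:2015}, no power of a coronal pseudo-Anosov mapping class arises from Penner's construction; in particular $\hat F_n$ does not.
\end{proof}
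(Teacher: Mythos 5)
Your proof is correct and follows the same route as the paper: Salem stretch factor $\Rightarrow$ coronal $\Rightarrow$ Shin--Strenner's obstruction. The paper's version states the chain in three lines without spelling out why a Salem number has a conjugate on the unit circle; your extra care in ruling out the degenerate quadratic case and noting that the Salem factor has degree $\ge 4$ is a harmless (and arguably welcome) expansion of the same argument, not a different one.
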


\begin{proof}
The stretch factor of $\widetilde F_n$ is a Salem number for all $n\ge 8$. It follows that the mapping class of $\widetilde F_n$ is coronal. By Shin and Strenner \cite{Shin-Strenner:2015}, we conclude that $\widetilde F_n$ does not arise from Penner's construction.
\end{proof}

\begin{proof}[Proof of the Main Theorem]
By construction, \(F_n\) preserves the unique invariant simple closed curve \(C_n\); hence the mapping class of \(F_n\) is reducible.  
Proposition~\ref{P:cut} shows that the cut surface
\[
  \widehat X_n \;=\; X_n\setminus C_n
\]
is an orientable surface of genus \(g=\lfloor(n+2)/2\rfloor\) with  
\emph{one} puncture when \(n\) is even and \emph{two} punctures when \(n\) is odd.

Combining Theorem~\ref{T:iso} with Proposition~\ref{P:pseudo}, we obtain a surface  
\[
  \Sigma_n \;\cong\; S_g \setminus\{\text{at least one point}\}
\]
on which \(\widehat F_n\) descends to a pseudo-Anosov homeomorphism
\[
  \widetilde F_n\colon \Sigma_n \longrightarrow \Sigma_n
\]
whose stretch factor is \(\lambda_n\).  Moreover, \(\widehat F_n\) (and hence \(\widetilde F_n\)) is isotopic to
\[
  T_n \;=\; \tau_{\gamma_1}\circ\cdots\circ\tau_{\gamma_{n+2}},
\]
the composition of the \(n+2\) positive Dehn twists defined in~\eqref{E:twist}.

Finally, Theorem~\ref{T:nopen} shows that the mapping class of \(\widehat F_n\) does \emph{not} arise from Penner's construction.
\end{proof}

\begin{rem} The surface $\hat X_n$ is an orientable surface $S_g$ of genus $g=\lfloor (n+2)/2 \rfloor$ with one or two punctures. Thus, one can consider $T_n$ as a homeomorphism on a closed surface $S_g$ with one or two marked points. Since a pseudo-Anosov map on a surface with Euler characteristic $<0$ must have a singularity, the homeomorphism $T_n$ must have singularities at the marked points. By blowing up those singularities, we recover the mapping on a non-orientable rational surface. 
\end{rem}

Recall a construction of Thurston \cite{Thurston:1988, Long:1985} that was given by a product of two positive multi-twists. 
Suppose $A$ and $B$ are two multi-curves on a connected finite-type oriented surface $S$. Let $T_A$ be the product of positive (i.e., right) Dehn twist about the components of $A$. The subgroup $\langle T_A, T_B\rangle < \text{Mod}(S)$ generated by $T_A$ and $T_B$ were studied by Leininger \cite{Leininger:2004}.

\begin{thm}\cite[Theorem~6.2]{Leininger:2004}
For any surface $S$, any multi-curves $A,B$, and any pseudo-Anosov element \[ \phi \in \langle T_A, T_B\rangle < \text{Mod}(S)\] we have the stretch factor $\lambda(\phi) \ge \lambda_8 \approx 1.17628$. Moreover, $\lambda(\phi) = \lambda_8$ precisely when $S$ has genus $5$ (with at most one market point), $\{A, B \} = \{A_L, B_L\}$ where the incident graph of $A_L$, and $B_L$ is given by $E_{10}$ diagram as in Figure \ref{F:incident}, and $\phi$ is conjugate to $(T_A T_B)^\pm$.
\end{thm}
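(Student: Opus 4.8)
The plan is to prove this through \emph{Thurston's construction} \cite{Thurston:1988, Long:1985} (see also \cite[Ch.~14]{Farb-Margalit:2012}), which linearises the action of $\langle T_A,T_B\rangle$ on a suitable flat structure. Write $A=\{\alpha_i\}$, $B=\{\beta_j\}$, let $N=(\iota(\alpha_i,\beta_j))$ be the matrix of geometric intersection numbers, and let $\Gamma$ be the associated bipartite incidence graph, with adjacency matrix $\bigl(\begin{smallmatrix}0&N\\ N^{T}&0\end{smallmatrix}\bigr)$ and spectral radius $\rho=\rho(\Gamma)$; put $\mu=\rho^{2}$, the Perron--Frobenius eigenvalue of $NN^{T}$. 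First one reduces to the case that $A\cup B$ fills $S$: otherwise every element of $\langle T_A,T_B\rangle$ is supported on the proper subsurface filled by $A\cup B$ and hence fixes an essential curve, so there is no pseudo-Anosov. When $A\cup B$ fills, Thurston's construction yields a homomorphism $\Phi\colon\langle T_A,T_B\rangle\to\mathrm{PSL}(2,\mathbb{R})$ with $\Phi(T_A)=\bigl(\begin{smallmatrix}1&\rho\\ 0&1\end{smallmatrix}\bigr)$, $\Phi(T_B)=\bigl(\begin{smallmatrix}1&0\\ -\rho&1\end{smallmatrix}\bigr)$, such that $\phi\in\langle T_A,T_B\rangle$ is pseudo-Anosov exactly when $\Phi(\phi)$ is hyperbolic, and then $\lambda(\phi)$ equals the spectral radius of $\Phi(\phi)$; moreover, since $\mu=\rho(\Gamma)^{2}$ is always of the form $2+2\cos(2\pi/h)$ (if $\Gamma$ is a Dynkin diagram), or $4$ (affine Dynkin), or $>4$ (otherwise), the group $\Phi(\langle T_A,T_B\rangle)=\langle P,Q\rangle$ with $P=\Phi(T_A)$, $Q=\Phi(T_B)$ is discrete in every case. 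Thus $\lambda(\phi)$ is governed by $\lambda(\phi)+\lambda(\phi)^{-1}=|\mathrm{tr}(\Phi(\phi))|$, and we must bound $|\mathrm{tr}(w)|$ below over all hyperbolic $w\in\langle P,Q\rangle$.

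The core is a two-step minimisation, the decisive regime being $\rho>2$ (i.e.\ $\mu>4$), which by the classification of graphs of spectral radius $\le 2$ (the Dynkin and affine Dynkin diagrams) means $\Gamma$ is not contained in an affine Dynkin diagram. \emph{(a) Fixed graph.} Here $\langle P,Q\rangle$ is free of rank two (a ping-pong argument, since $\rho>2$). Writing a hyperbolic $w$, up to conjugacy, as a cyclically reduced word $P^{a_1}Q^{b_1}\cdots P^{a_k}Q^{b_k}$ with $a_i,b_i\neq 0$ and using $P^{a}=\bigl(\begin{smallmatrix}1&a\rho\\ 0&1\end{smallmatrix}\bigr)$, $Q^{b}=\bigl(\begin{smallmatrix}1&0\\ -b\rho&1\end{smallmatrix}\bigr)$, one finds $\mathrm{tr}(w)$ is a polynomial in $\rho$ of degree $2k$; for $k=1$ it equals $2-a_1b_1\rho^{2}$, so $|\mathrm{tr}(w)|\ge\rho^{2}-2$ with equality precisely when $a_1b_1=1$ (equivalently $w$ conjugate to $(PQ)^{\pm 1}$), while for $k\ge 2$ one shows $|\mathrm{tr}(w)|>\rho^{2}-2$ using $\rho^{2}>4$. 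Hence $\lambda(\phi)\ge\lambda(T_AT_B)$ with equality only if $\phi\sim(T_AT_B)^{\pm 1}$, and $\lambda(T_AT_B)+\lambda(T_AT_B)^{-1}=|\mathrm{tr}(PQ)|=\rho(\Gamma)^{2}-2$ is increasing in $\rho(\Gamma)$. \emph{(b) Varying the graph.} It remains to minimise $\rho(\Gamma)$ over all incidence graphs of filling pairs with $\rho(\Gamma)>2$. By McMullen's theorem \cite{McMullen:2002} (building on Hironaka \cite{Hironaka:2001}) --- equivalently, the statement that the least spectral radius exceeding $1$ of a Coxeter element is Lehmer's number --- this minimum is attained uniquely at $\Gamma=E_{10}$, where the Coxeter element has spectral radius $\lambda_8$, so that $\rho(E_{10})^{2}-2=\lambda_8+\lambda_8^{-1}$ and $\lambda(T_AT_B)=\lambda_8$ there. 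Finally, for $\rho\le 2$ the group $\langle P,Q\rangle$ is one of the finitely many Dynkin and affine Dynkin families of triangle-type groups (for instance $\mathrm{SL}(2,\mathbb{Z})$ when $\Gamma=A_2$), and a direct inspection shows that each of these yields only pseudo-Anosovs of dilatation strictly greater than $\lambda_8$; so these cases contribute no pseudo-Anosov of dilatation $\le\lambda_8$. Combining, $\lambda(\phi)\ge\lambda_8$, with equality forcing $\rho(\Gamma)=\rho(E_{10})$, hence $\Gamma=E_{10}$ and $\{A,B\}=\{A_L,B_L\}$ (the pair being determined up to homeomorphism by its tree incidence graph), and $\phi$ conjugate to $(T_AT_B)^{\pm 1}$.

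It then remains to identify $S$ in the equality case. For $\Gamma=E_{10}$ the pair consists of ten curves meeting in nine points (the edges of the tree $E_{10}$), and the Thurston flat structure it determines is computed to be the closed genus-$5$ surface with a single distinguished point, at which the invariant foliations of $T_AT_B$ have an $18$-prong singularity; puncturing (or not) this point gives the ``genus $5$ with at most one marked point'' of the statement. That this case is non-empty is seen by exhibiting the $E_{10}$ pair on that surface --- precisely the configuration $\{A,B\}$ of Section~\ref{S:twists} at $n=8$, whose incidence graph is the $E_{10}$ diagram of Figure~\ref{F:incident}.

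The step I expect to be the main obstacle is (a): the claim that $PQ$ realises the minimal value $\rho^{2}-2$ of $|\mathrm{tr}|$ among all hyperbolic words, with only the conjugates of $(PQ)^{\pm 1}$ as minimisers --- equivalently, that $PQ$ represents the systole of the hyperbolic surface $\mathbb{H}^{2}/\langle P,Q\rangle$. This needs a genuine argument for words of syllable length $\ge 2$ (an induction controlling the trace polynomials, or a Ford-domain analysis of the two-parabolic group), and it is also what pins down the conjugacy class of $\phi$ in the equality case. The graph-theoretic input (b) is deep but can be quoted from \cite{McMullen:2002}, while the finite/affine case-check and the surface bookkeeping are routine.
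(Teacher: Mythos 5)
This statement is not proved in the paper at all: it is a result quoted verbatim from Leininger \cite[Theorem~6.2]{Leininger:2004}, and the paper invokes it only to contextualize the $n=8$ example. There is therefore no ``paper's own proof'' to compare you against; the relevant comparison is with Leininger's original argument.

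Your sketch is, at the level of strategy, a faithful reconstruction of Leininger's proof: reduce to filling pairs, apply Thurston's representation $\Phi\colon\langle T_A,T_B\rangle\to\mathrm{PSL}(2,\mathbb{R})$ sending $T_A,T_B$ to parabolics of translation length $\rho=\rho(\Gamma)$, bound traces of hyperbolic words from below with $PQ$ realizing the minimum, and then minimize $\rho(\Gamma)$ over the admissible bipartite incidence graphs using the Hironaka--McMullen identification of $E_{10}$ (Lehmer's number) as the extremal case. The translation $\lambda(T_AT_B)+\lambda(T_AT_B)^{-1}=\rho(\Gamma)^2-2$ and the $18$-prong Euler--Poincar\'e bookkeeping on the closed genus-$5$ surface are also correct. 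You have correctly isolated the two places where real work is needed and cannot be waved away: (i) the systole statement that no cyclically reduced word of syllable length $\ge 2$ in the free two-parabolic group has $|\mathrm{tr}|<\rho^2-2$, and (ii) the uniqueness, up to homeomorphism, of the realizing filling pair once the incidence graph is pinned to the tree $E_{10}$. Both are established in Leininger's paper (by a trace/ping-pong analysis, respectively a combinatorial argument special to trees), and your proposal would need to import or reproduce them. Two further points you should tighten: your dismissal of the $\rho(\Gamma)\le 2$ regime is too quick --- for $\rho=2$ the image is $\Gamma(2)$, which does contain hyperbolics, so one must actually compute that the smallest hyperbolic trace there is $6$ (giving $3+2\sqrt2\gg\lambda_8$) rather than assert it; and the incidence graph can a priori have multi-edges, so the graph-spectral minimization must be over bipartite multigraphs, not just simple Coxeter diagrams (any multi-edge forces $\rho\ge 2$ and, with connectedness and filling, pushes $\rho$ well past $\rho(E_{10})$, but this deserves a sentence).
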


\begin{rem}
For $n=8$, the incident graph of essential simple closed curves $\gamma$s is given by $E_{10}$ diagram as in Figure \ref{F:incident}.  Leininger also showed that most elements in $\langle T_A, T_B\rangle$ are pseudo-Anosov  \cite[Proosition~6.4]{Leininger:2004}.  It is interesting to know if $T_n$ defined in (\ref{E:twist}) is an element of $ \langle T_A, T_B\rangle$ with $A=\{\alpha_i, i=1, \dots, n\}$ and $B=\{\beta_j, j=1, \dots, m\}$. A positive answer leads us to conclude that $\hat F_n$ is conjugate to $(T_A T_B)^\pm$. Also, in the next Section, we discuss how we can see rational surface automorphisms fixing an invariant cusp cubic with pseudo-Anosov maps in an orientable surface with one or two punctures. It would be exciting to see the connection between real rational surface diffeomorphisms and pseudo-Anosov maps obtained by Thurston's construction. 
\end{rem} 


%

\section{Pseudo-Anosov maps on $\hat X_n$}\label{S:psa}
Let $J:\ppc \dasharrow \ppc$ be the involution defined by 
\[ J:[x_1:x_2:x_3] \mapsto [x_2 x_3:x_1 x_3:x_1 x_2]. \]
This map $J$ is an involution with three exceptional lines $\mathcal{E}(J) =\{ \{x_i=0\}, i=1,2,3\}$ and three points of indeterimacy $\mathcal{I}(J) = \{p_1=[1:0:0],p_2=[0:1:0],p_3=[0:0:1]\}$. \[ J: \{ x_i=0\} \setminus \mathcal{I}(J) \mapsto \{ x_j=0\} \cap \{ x_k=0\}, \qquad\text{where} \quad  \{i,j,k\} = \{1,2,3\}.\]
We say a birational map $f:\ppc \dasharrow \ppc$ is \textit{basic quadratic} if $f = T\circ J \circ S^{-1}$ for some $T, S \in \text{GL}(3, \mathbb{C})$. Thus, up to linear conjugacy, the set $\mathcal{Q}_b$ of basic quadratic maps is given by
\[ \mathcal{Q}_b \ = \ \{ L \circ J: L \in \text{GL}(3, \mathbb{C})\}. \]
We say a birational map $f:\ppc \dasharrow \ppc$ \textit{properly fixes} a curve $C$ if (1) $\overline{f (C \setminus \mathcal{I}(J))  } = C$, and (2) the orbit of exceptional curves lies on $C$. Diller \cite{Diller:2011} constructed all quadratic birational maps properly fixing a cubic curve and identified maps birationally equivalent to automorphisms. 

\vspace{1ex}
Suppose $f=L\circ J \in \mathcal{Q}_b$ is a basic quadratic birational map. Then $f$ has three exceptional lines and three points of indeterminacy: \[ \mathcal{E}(f) = \{ E_i = \{ x_i=0\}, i=1,2,3\}, \quad \mathcal{I}(f)= \{ Lp_i, i=1,2,3\}.\] Let $n_i \in \mathbb{N}_{>0} \cup \{\infty\}$ defined by \[ n_i = \min \{ n\ge 1 : \dim f^{n+1} E_i>0 \}, \] where we set $\min \emptyset = \infty$. Since $E_i$ is an exceptional line, $n_i$ is always strictly greater than $0$. It is known that $f$ is birationally equivalent to an automorphism if and only if $n_i < \infty$ for all $i=1,2,3$. 

If $n_i < \infty$ for all $i=1,2,3$, then the set of points $\{f^{n_i} E_i, i=1,2,3\} = \mathcal{I}(f)$. Thus, there is a permutation $\sigma \in \Sigma_3$ such that $f^{n_i} E_i = p_{\sigma(i)}$. We call the three integers $n_1, n_2, n_3$ together with a permutation $\sigma$ the \textit{orbit data} of $f$. 

\vspace{1ex}
It is known \cite{Bedford-Kim:2004} that the orbit data determines the dynamical degree, $\lambda(f)$, defined by the exponential growth rate of the algebraic degree of $n$ fold composition of $f$: \[ \lambda(f) = \lim_{n \to \infty} (\text{deg} f^n)^{1/n}. \] And if $f$ is birationally equivalent to a rational surface automorphism $F: X \to X$, then the topological entropy of  $F$ is given by $\log \lambda(f)$: \[ h_\text{top}(F) = \log \lambda(f) .\] Changing coordinates if necessary, we only need to consider three kinds of permutations: the identity permutation, the transpose between $i$ and $j$, $\sigma_{ij}$, and the cyclic permutation, $\sigma_c$. Let $\mathcal{O}_\dagger$ be the set of orbit data defined by 
\[ \mathcal{O}_\dagger = \{ (n,n,n, \sigma_c), (1,n,n, \sigma_c), (2,2,n, \sigma_c), (n_i=n_j, \sigma_{i,j}), (n,n,n,Id) : n \in \mathbb{N}_{>0} \}. \]
\begin{thm}\cite{Diller:2011}
Suppose $n_1, n_2, n_3$ are three positive integers and $\sigma \in \Sigma_3$ is a permutation. If $(n_1,n_2,n_3, \sigma) \not\in \mathcal{O}_\dagger$,  then there exists $f \in \mathcal{Q}_b$ properly fixing an irreducible cubic $\mathcal{C}$ with a cuspidal singularity such that the orbit data of $f$ is given by $(n_1,n_2,n_3, \sigma) $. In this case, the cubic curve $\mathcal{C}$ is the unique invariant curve of $f$. Furthermore, for each such orbit data, there exists a unique pair of such birational maps $f_r, f_r^{-1} \in \mathcal{Q}_b$, which preserve a real projective plane $\ppr$. 
\end{thm}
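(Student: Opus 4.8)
The plan is to follow Diller's explicit construction on a fixed cuspidal cubic and reduce the existence claim to a single homogeneous linear system whose solvability is governed by the characteristic polynomial of the orbit data. First I would normalize: up to linear conjugacy every cuspidal cubic is $\mathcal{C}_0=\{zy^2=x^3\}$, rationally parametrized by $\psi(t)=[t:1:t^3]$ with $\psi(0)$ the flex, $\psi(\infty)$ the cusp, and $\psi(t_1),\psi(t_2),\psi(t_3)$ collinear exactly when $t_1+t_2+t_3=0$. Since the cusp is the unique singular point, any $f\in\mathcal{Q}_b$ properly fixing a cuspidal cubic can be conjugated so as to properly fix $\mathcal{C}_0$; then $f|_{\mathcal{C}_0}$ is an automorphism of $\mathcal{C}_0\cong\mathbf{P}^1$ fixing the cusp, hence of the form $\psi(t)\mapsto\psi(\delta t+c)$ with $\delta\in\mathbb{C}^{*}$ and $c\in\mathbb{C}$, and the only remaining freedom is the one-parameter family of conjugacies $\psi(t)\mapsto\psi(at)$.

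Next I would turn the orbit data into linear equations. Write $\psi(u_1),\psi(u_2),\psi(u_3)$ for the three points of $\mathcal{I}(f)$ and $\psi(v_i)$ for the image $f(E_i)$ of the $i$-th exceptional line. Because $f$ is basic quadratic, $E_i$ is the line through $\psi(u_j)$ and $\psi(u_k)$, so its third intersection with $\mathcal{C}_0$ is $\psi(-(u_j+u_k))$; pushing this forward by $f|_{\mathcal{C}_0}$, together with the dual relations for $f^{-1}$, gives
\[ v_i=\delta u_i-2c\ \ (i=1,2,3),\qquad \delta(u_1+u_2+u_3)=3c. \]
Iterating $f|_{\mathcal{C}_0}$ as an affine map, the orbit-data requirement $f^{n_i-1}(\psi(v_i))=\psi(u_{\sigma(i)})$ is $\delta^{n_i-1}v_i+c(\delta^{n_i-1}-1)/(\delta-1)=u_{\sigma(i)}$, i.e., after substituting $v_i=\delta u_i-2c$,
\[ \delta^{n_i}u_i-2\delta^{n_i-1}c+c\,\frac{\delta^{n_i-1}-1}{\delta-1}=u_{\sigma(i)}\qquad(i=1,2,3). \]
These four relations, after clearing the denominator, form a homogeneous linear system $M_{n_1,n_2,n_3,\sigma}(\delta)\,(u_1,u_2,u_3,c)^{\top}=0$ with coefficients polynomial in $\delta$, so a nonzero solution exists precisely when $\det M_{n_1,n_2,n_3,\sigma}(\delta)=0$. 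I would then carry out the determinant computation to identify $\det M_{n_1,n_2,n_3,\sigma}$, up to a nonzero scalar, with the characteristic polynomial $\chi_{n_1,n_2,n_3,\sigma}$ of the element of the Weyl group acting on $\mathrm{Pic}(X)$ determined by the orbit data ($N=n_1+n_2+n_3$); this is consistent with McMullen's result that, in the cuspidal anticanonical setting, the multiplier $\delta$ of $f$ on the invariant cubic is always a root of $\chi_{n_1,n_2,n_3,\sigma}$.

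Assuming $(n_1,n_2,n_3,\sigma)\notin\mathcal{O}_\dagger$, I would argue that $\chi_{n_1,n_2,n_3,\sigma}$ has a root $\delta$ for which solving the system yields a non-degenerate configuration, and that in the real case the admissible values of $\delta$ are exactly the two real roots of $\chi_{n_1,n_2,n_3,\sigma}$ off the unit circle---the Salem number $\lambda$ and its reciprocal---which are interchanged by inversion. Fixing such a $\delta$ and solving the system determines $(u_1,u_2,u_3,c)$ up to the scaling conjugacy, hence the points of $\mathcal{I}(f)$ and their images; imposing that $f$ carry $\mathcal{C}_0$ to $\mathcal{C}_0$ while inducing $\psi(t)\mapsto\psi(\delta t+c)$ then pins down the linear factor $L$ in $f=L\circ J$, using Diller's explicit description of basic quadratic maps properly fixing a cubic. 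I would then verify the non-degeneracy conditions: the orbit points $\psi(v_i),f(\psi(v_i)),\dots$ are pairwise distinct; $\psi(u_1),\psi(u_2),\psi(u_3)$ are not collinear, so $f$ is a genuine quadratic map with these three points of indeterminacy; and for each $i$ one has $f^{m}(\psi(v_i))\notin\mathcal{I}(f)$ for $0\le m\le n_i-2$, while $f^{n_i-1}(\psi(v_i))=\psi(u_{\sigma(i)})$. This says precisely that the orbit data of $f$ equals $(n_1,n_2,n_3,\sigma)$, so by the orbit-data criterion recalled above $f$ is birationally equivalent to an automorphism $F\colon X\to X$, with $X$ the blowup of $\ppc$ along the $N$ orbit points, all of which lie on $\mathcal{C}_0$. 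Since $f(\mathcal{C}_0)=\mathcal{C}_0$, the map $f$ properly fixes the irreducible cuspidal cubic $\mathcal{C}_0$, whose strict transform represents $-K_X$, and from Diller's analysis one concludes that $\mathcal{C}_0$ is the unique $f$-invariant curve (in the positive-entropy range this is the usual consequence of $h^0(-K_X)=1$). For the final assertion I would take $\mathcal{C}_0$, the root $\delta\in\{\lambda,\lambda^{-1}\}$, and the solution of the system all real, obtaining a real basic quadratic map $f_r$ preserving $\ppr$, with $f_r^{-1}$ again real and corresponding to the reciprocal root; since a real basic quadratic map properly fixing a cuspidal cubic with this orbit data must have $\delta\in\{\lambda,\lambda^{-1}\}$ and is then determined up to real linear conjugacy, $\{f_r,f_r^{-1}\}$ is the unique such pair.

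The hard part will be the non-degeneracy verification in the third step: one must show that the configuration returned by the linear algebra really consists of distinct orbit points with non-collinear indeterminacy and orbit segments of exactly the prescribed lengths, and it is exactly this that breaks down---points collide, or the invariant cubic is forced to degenerate (acquiring a node, or splitting off a line)---for the finitely many exceptional families making up $\mathcal{O}_\dagger$; the only other nontrivial ingredient is the explicit identification of $\det M_{n_1,n_2,n_3,\sigma}(\delta)$ with the combinatorially defined characteristic polynomial of the orbit data.
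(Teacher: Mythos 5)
The paper does not prove this statement at all; it is quoted verbatim from Diller's 2011 paper ``Cremona transformations, surface automorphisms, and plane cubics'' and used as a black box, with the explicit coordinate construction deferred to that reference. So there is no ``paper proof'' to compare against, and your task here was really to reconstruct Diller's argument. Your outline tracks Diller's actual strategy closely: parametrize the nonsingular locus of the fixed cuspidal cubic by $\mathbb{G}_a$, use that the restriction of $f$ is an affine map $t\mapsto\delta t+c$ fixing the cusp, encode the orbit\mbox{-}data requirement as a linear system in the indeterminacy parameters, and identify the vanishing locus of its determinant with the characteristic polynomial of the Weyl\mbox{-}group element attached to $(n_1,n_2,n_3,\sigma)$. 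Your relations $v_i=\delta u_i-2c$ and $\delta(u_1+u_2+u_3)=3c$ are internally consistent (they follow from the collinearity rule $t_1+t_2+t_3=0$ applied to the forward and backward exceptional lines), and you correctly single out non\mbox{-}degeneracy as the place where the exceptional families in $\mathcal{O}_\dagger$ appear.

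Two places where your sketch is thinner than what the actual proof needs. First, the identification $\det M_{n_1,n_2,n_3,\sigma}(\delta)=\chi_{n_1,n_2,n_3,\sigma}(\delta)$ (up to a nonzero factor) is not a footnote; it is the technical core and requires a genuine computation (this is where Diller leans on McMullen's determinant calculation), and one must also handle the translation parameter $c$: when $\delta\neq1$ you can conjugate $c$ to $0$ and the four relations collapse to a $3\times3$ system plus the collinearity constraint, but when $\delta=1$ you cannot, and that is precisely the boundary between the degenerate and non\mbox{-}degenerate cases, so it should not be glossed over. Second, your uniqueness argument for the real pair $\{f_r,f_r^{-1}\}$ implicitly assumes $\chi_{n_1,n_2,n_3,\sigma}$ is Salem (so that there are exactly two real roots off the unit circle interchanged by $\delta\mapsto\delta^{-1}$). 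That is true in the positive\mbox{-}entropy range relevant to this paper, but the theorem as stated applies to all orbit data outside $\mathcal{O}_\dagger$, and Diller's argument in the remaining (cyclotomic\mbox{-}multiplier) cases is different; you should either restrict your claim to $h_{\mathrm{top}}>0$ or supply the separate argument. With those two points filled in, your reconstruction would match the cited proof.
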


A birational map in the above theorem can be constructed using an explicit formula. The details of such construction can be found in \cite{Diller:2011, Kim:2022}. The computer codes (SageMath and Mathematica) generating such birational maps are available at \texttt{https://www.math.fsu.edu/~kim/publication.html}.

\vspace{1ex}
For such $f_r$ in the above theorem, there exists a real rational surface $X$ obtained by blowing up a finite set of points on the invariant cubic curve $\mathcal{C}\in \ppr$. Thus, as in Section \ref{S:rat}, we obtain a diffeomorphism, $F_r$ on an orientable surface, $X\setminus \mathcal{C}$ with one or two punctures.

Diller and Kim \cite{Diller-Kim} identified six orbit data with cyclic permutations whose actions on the homology group are periodic. Klassen and the author \cite{Kim-Klassen} computed the action on the fundamental group for such real diffeomorphisms with cyclic orbit data, showing that these six cases induce actions on the fundamental group with exponential growth rates. Later, Kim and Park \cite{Kim-Park}, using the invariant semigroup, showed that orbit data of the form $(1,m,n)$ with $1+m+n \ge 10$ and a cyclic permutation correspond to induced actions on the fundamental group with exponential growth rate.

\begin{thm}\label{T:kk}\cite{Kim-Park}
Suppose a basic quadratic birational map $f_r\in \mathcal{Q}_b$ fixes a cusp cubic $\mathcal{C}$ properly with the orbit data $(1,n_2,n_3, \sigma_c) \not\in \mathcal{O}_\dagger$ satisfying $1+n_2+n_3 \ge 10$. Also, suppose $F_r: X \to X$ is the automorphism on a real rational surface $X$, which is birationally equivalent to $f_r$ . Also let $\hat F_r: \hat X \to \hat X$ be the diffeomorphism on a cut surface $\hat X = X \setminus \mathcal{C}$.
Then, the induced action $\hat F_{r*}$ on $\pi_1(\hat X)$ has spectral radius $>1$. Furthermore $\hat F_{r*}$ is irreducible.
\end{thm}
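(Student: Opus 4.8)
The plan is to reduce Theorem~\ref{T:kk} to a spectral‑radius computation on $H_1(X;\mathbb{Q})$ and then to identify that spectral radius with the dynamical degree $\lambda(f_r)$. Write $N=n_1+n_2+n_3$ for the number of (real) centres of blow‑up; since every centre lies on the cuspidal cubic $\mathcal{C}\subset\ppr$, the real surface $X$ is a connected sum of $N+1$ copies of $\ppr$, so $H_1(X;\mathbb{Z})\cong\mathbb{Z}^{N}\oplus\mathbb{Z}/2$ and $H_1(X;\mathbb{Q})\cong\mathbb{Q}^{N}$. As in the proof of Proposition~\ref{P:fgrow}, the growth rate of $F_{r*}$ on $\pi_1(X)$ dominates the growth rate of the induced action on the abelianization, hence it dominates the spectral radius $\rho(M)$ of the induced $\mathbb{Q}$‑linear map $M$ on $H_1(X;\mathbb{Q})$; so it suffices to prove $\rho(M)>1$.

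To get hold of $M$, lay out $X$ as in Figure~\ref{F:horizontal}, with $\mathcal{C}$ drawn as the horizontal axis meeting each of the $N+1$ cross‑caps exactly once, take the vertical loop through each cross‑cap as a generator of $\pi_1(X)$ and the horizontal arc inside each cross‑cap as a reading curve. Since $f_r=L\circ J$ is basic quadratic and properly fixes $\mathcal{C}$, the automorphism $F_r$ is determined by its three exceptional lines $E_1,E_2,E_3$ (with orbit lengths $n_1,n_2,n_3$), by the cyclic gluing $\sigma_c$ matching $F_r^{n_i}E_i$ to a point of indeterminacy, and by the affine restriction $F_r|_{\mathcal{C}}$ on $\mathcal{C}_{\mathrm{reg}}(\mathbb{R})\cong\mathbb{R}$. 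Generalising Lemma~\ref{L:basic} and the computation of Proposition~\ref{P:faction} to arbitrary orbit data --- $F_r$ sends $E_i$ into the first exceptional curve of the $i$‑th orbit, shifts $F_r^kE_i\mapsto F_r^{k+1}E_i$ along that orbit, and at its end wraps around through $\mathcal{C}$ according to $\sigma_c$ --- one reads off $F_{r*}$ on each generator with the help of the reading curves, using the backward exceptional lines $M_1,M_2,M_3$ and $\mathcal{C}$ as barriers. Abelianising yields an explicit $N\times N$ integer matrix $M=M(n_1,n_2,n_3,\sigma_c)$ of block‑banded shape: three shift blocks of sizes $n_1,n_2,n_3$ coupled at their ends by a bounded number of $\pm1$ entries governed by $\sigma_c$ and by how $E_1,E_2,E_3,M_1,M_2,M_3$ meet $\mathcal{C}$ (the analogue of the last rows of $M_n$ in Proposition~\ref{P:fgrow}). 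Because the interior of each block is a pure shift, a cofactor expansion along the three orbit segments evaluates $\det(tI-M)$ in closed form as a fixed universal expression $P_{n_1,n_2,n_3}(t)$ in $t^{n_1},t^{n_2},t^{n_3}$ and a few low‑degree polynomials --- structurally the same calculation that produced $\chi_n$ out of $M_n$.

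It then remains to show $\rho(M)>1$, i.e.\ that $P_{n_1,n_2,n_3}(t)$ has a root outside the unit circle, for every admissible orbit datum. The clean route is to compare $P_{n_1,n_2,n_3}$ with the characteristic polynomial of $F_r^{*}$ acting on $\mathrm{Pic}(X(\mathbb{C}))$: that Picard lattice has rank $N+1$, is spanned by the line class and the $N$ real exceptional classes, and $F_r^{*}$ acts on it as the Coxeter‑type element determined by $(n_1,n_2,n_3,\sigma_c)$, whose spectral radius is exactly the dynamical degree $\lambda(f_r)$ by Bedford--Kim \cite{Bedford-Kim:2004}; the hypotheses $(n_1,n_2,n_3,\sigma_c)\notin\mathcal{O}_\dagger$ and $n_1+n_2+n_3\ge10$ are precisely what forces $\lambda(f_r)>1$, the excluded orbit data and the small sums being the parabolic and finite‑order cases. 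One then checks --- either by matching the two closed‑form polynomials, or by producing an $F^{*}$‑equivariant relation between $H_1(X;\mathbb{Q})$ and $\mathrm{Pic}(X(\mathbb{C}))\otimes\mathbb{Q}$ whose kernel and cokernel carry only finite‑order action --- that $P_{n_1,n_2,n_3}(t)$ and $\det(tI-F_r^{*})|_{\mathrm{Pic}}$ differ only by cyclotomic factors, hence have the same spectral radius. Therefore $\rho(M)=\lambda(f_r)>1$, and the growth rate of $F_{r*}$ on $\pi_1(X)$ is $>1$. (In the family of Theorem~\ref{T:rat} this comparison collapses to dividing the Picard characteristic polynomial by the single factor $t+1$, recovering $\chi_n$.)

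The main obstacle is this last identification carried out uniformly in $(n_1,n_2,n_3)$ and $\sigma_c$: controlling the ``extra'' eigenvalues of $M$ --- showing they are all roots of unity --- whether one argues conceptually, by exhibiting and analysing the homological comparison map, or bookkeeping‑style, by manipulating the recursion that defines $P_{n_1,n_2,n_3}$. If a fully uniform conceptual argument proves awkward, the fallback is a direct root‑location argument on the closed form $P_{n_1,n_2,n_3}(t)$ --- evaluating it at $t=1$ and tracking its sign as $t\to\infty$ to exhibit a real root $>1$ whenever $n_1+n_2+n_3\ge10$ --- which is the same flavour of estimate Bedford--Kim use to establish $\lambda>1$ for the orbit‑data polynomials themselves.
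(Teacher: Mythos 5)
This statement is imported verbatim from Kim--Klassen (the \verb|\cite{Kim-Klassen}| in the theorem header), and the present paper supplies no proof of it at all; it is used as a black box in Section~\ref{S:psa}. So there is no ``paper's own proof'' to compare against, and the relevant question is whether your sketch would actually close.

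As written it would not, and the gap is exactly where you flag it. The reduction to the abelianization and the claim that the growth rate of $F_{r*}$ on $\pi_1(X)$ dominates $\rho(M)$ on $H_1(X;\mathbb{Q})\cong\mathbb{Q}^{N}$ is fine, and the reading-curve setup for producing $M$ from the orbit data mirrors what the paper does in Proposition~\ref{P:faction}--\ref{P:fgrow} for the single family. But everything after that is a plan rather than an argument: you never actually produce the matrix $M(n_1,n_2,n_3,\sigma_c)$ or the polynomial $P_{n_1,n_2,n_3}(t)$, and the two candidate ways to establish $\rho(M)>1$ (a uniform factorization matching against the Picard characteristic polynomial up to cyclotomics, or a sign argument at $t=1$ and $t\to\infty$) are both left unexecuted. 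Moreover, the stronger identification $\rho(M)=\lambda(f_r)$ that you reach for is more than what the theorem claims and more than what is currently known: the paper explicitly says (Remark after Theorem~\ref{T:mnp}) that for general cyclic orbit data ``we only have the lower bound of the growth rates,'' and Theorem~\ref{T:mnp} singles out the orbit data $(2,n,n,\sigma_c)$ precisely because that is where Diller--Kim establish equality with the dynamical degree on homology. So proving the Picard/$H_1$ comparison uniformly is not a routine bookkeeping step one can wave at; it is the content that is missing. Your fallback (showing $P_{n_1,n_2,n_3}$ has a real root $>1$ directly) is the more honest route and is in the spirit of what one expects Kim--Klassen to do, but without the explicit closed form in hand it remains an IOU, not a proof.
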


\begin{thm}\label{T:mpsa}
Let  
\(
  F_r \colon X \to X
\) 
be the automorphism appearing in Theorem~\ref{T:kk}, and let  
\(\mathcal{C}\subset X\) be its unique invariant simple closed curve system.  
Then the cut surface
\[
  X\setminus\mathcal{C}
\]
is homeomorphic to an orientable surface of genus
\[
  g \;=\;\Bigl\lfloor\frac{n_1+n_2+n_3}{2}\Bigr\rfloor
\]
with  
\(
  k = 1
\)
puncture when \(n_1+n_2+n_3\) is even and  
\(k = 2\) punctures when it is odd.  

Moreover, the restriction
\[
  F_r\bigl|_{\,X\setminus\mathcal{C}}\colon
  X\setminus\mathcal{C}\;\longrightarrow\;X\setminus\mathcal{C}
\]
descends to a pseudo-Anosov homeomorphism on a punctured orientable surface of genus \(g\).
\end{thm}

%

\begin{proof}
By Proposition \ref{P:cut}, the cut surface $X\setminus \mathcal{C}$ is orientable. If $n_1+n_2+n_3$ is even, then the curve $\mathcal{C}$ is $1$-sided and $n_1+n_2+n_3$ is odd, then the curve $\mathcal{C}$ is $2$-sided. Thus, the first assertion follows. 

The induced action, $F_{r*}$ on $\pi_1(X\setminus \mathcal{C})$ is irreducible and not periodic \cite{Kim-Park}. Since the class of the cubic $\mathcal{C}$ is fixed under $F_{r*}$, using  Bestvina-Handel's Theorem, we see that only fixed elements under $F_{r*}$ are given by the power of the class of $\mathcal{C}$. Thus, with the same reasoning as in Proposition \ref{P:pseudo}, we have the desired conclusion. 
\end{proof}

\begin{thm}\label{T:mnp}
Let  
\(
  F_r\colon X \to X
\)
be the automorphism in Theorem~\ref{T:kk} associated with the orbit data
\(
  (2,n,n,\sigma_c)
\)
for some \(n\ge 4\).
Let \(\mathcal{C}\subset X\) be its invariant curve system.

Then the cut surface
\[
  X\setminus\mathcal{C}
\]
is homeomorphic to an orientable surface of genus \(n+1\) with a single puncture.
Moreover, the restriction
\[
  F_r\bigl|_{\,X\setminus\mathcal{C}}\colon
  X\setminus\mathcal{C}\;\longrightarrow\;X\setminus\mathcal{C}
\]
descends to a pseudo-Anosov homeomorphism on that once-punctured surface of genus \(n+1\);
this mapping class does \emph{not} arise from Penner's construction.
\end{thm}
%

\begin{proof}
In this case, Diller and the author \cite{Diller-Kim} showed that the growth rate of the induced action on the homology group is equal to the dynamical degree of the corresponding birational map on $\ppc$. It follows that a Salem number $>1$ gives the growth rate of the induced action on the fundamental group. It follows that $F_r|X \setminus \mathcal{C}$ is coronal. By Shin and Strenner \cite{Shin-Strenner:2015}, we conclude that the induced pseudo-Anosov map does not arise from Penner's construction. 
\end{proof}

\begin{rem} The induced actions on the fundamental groups of the diffeomorphisms in Theorem \ref{T:mpsa} were computed in \cite{Kim-Park}. It would be interesting to know the mapping classes to which the restriction map $F_r|X \setminus \mathcal{C}$ belongs. Also, we only have the lower bound of the growth rates of those induced actions on the fundamental group. Except for some cases, such as mappings in Theorem \ref{T:mnp}, we can not conclude the restriction map $F_r|X \setminus \mathcal{C}$ does not arise from Penner's construction using Shin and Strenner's argument \cite{Shin-Strenner:2015}. 
\end{rem}

\bibliographystyle{plain}
\bibliographystyle{unsrt}
\bibliography{biblio}
\end{document}